\theoremstyle{plain}
\newtheorem{theorem}{Theorem}[section]
\newtheorem{thm}{Theorem}[section]
\newtheorem{prop}[thm]{Proposition}
\newtheorem{cor}[thm]{Corollary}
\newtheorem{defn}[thm]{Definition}
\newtheorem{lem}[thm]{Lemma}
\newtheorem{remark}[thm]{Remark}
\theoremstyle{definition}
\theoremstyle{remark}
         \newtheorem{ex}[theorem]{Example}
\theoremstyle{plain}
\newcommand{\R}{\mathbb{R}}
\newcommand{\N}{\mathbb{N}}
\newcommand{\CP}{\mathbb{CP}}
\newcommand{\HP}{\mathbb{HP}}
\newcommand{\dimn}{\mathrm{dim}}
\newcommand{\supp}{\mathrm{supp}}
\newcommand{\ric}{\mathrm{Ric}}
\newcommand{\trace}{\mathrm{tr}}
\newcommand{\dv}{\text{ }dV}
\DeclareMathOperator{\cU}{\mathscr{U}}
\DeclareMathOperator{\cV}{\mathscr{V}}
\DeclareMathOperator{\cF}{\mathscr{F}}
\DeclareMathOperator{\cM}{\mathscr{M}}
\DeclareMathOperator{\cH}{\mathscr{H}}
\DeclareMathOperator{\A}{\alpha}
\DeclareMathOperator{\w}{\omega}
\DeclareMathOperator{\V}{\mathcal{V}}
\DeclareMathOperator{\cC}{\mathscr{C}}
\DeclareMathOperator{\dom}{\mathscr{D}}
\DeclareMathOperator{\ho}{\mathcal{C}^{\A}_{\textup{ie}}}
\DeclareMathOperator{\hok}{\mathcal{C}^{k, \A}_{\textup{ie}}}
\numberwithin{equation}{section}
\definecolor{qqwuqq}{rgb}{0,0,0}
\begin{document}

\date{\today}

\title[Stability of Ricci de Turck flow on Singular Spaces]
{Stability of Ricci de Turck flow on Singular Spaces}

\author{Klaus Kr\"oncke}
\address{University Hamburg, Germany} 
\email{klaus.kroencke@uni-hamburg.de} 

\author{Boris Vertman} 
\address{Universit\"at Oldenburg, Germany} 
\email{boris.vertman@uni-oldenburg.de}

\thanks{Partial support by DFG Priority Programme "Geometry at Infinity"}

\subjclass[2010]{Primary 53C44; Secondary 53C25; 58J35.}
\keywords{Ricci flow, stability, integrability, conical singularities}

\begin{abstract}
In this paper we establish stability of the Ricci de Turck flow near Ricci-flat metrics
with isolated conical singularities. More precisely, we construct a Ricci de Turck flow
which starts sufficiently close to a Ricci-flat metric with isolated conical singularities
and converges to a singular Ricci-flat metric under an assumption of 
integrability, linear and tangential stability. We provide a characterization of conical singularities
satisfying tangential stability and discuss examples where the integrability condition
is satisfied. 
\end{abstract}

\maketitle

\tableofcontents

\section{Introduction and statement of the main results}

Geometric flows, among them most notably the Ricci flow, provide a powerful tool 
to attack classification problems in differential geometry and construct Riemannian 
metrics with prescribed curvature conditions. The interest in this research area only grew
since the Ricci flow was used decisively in the Perelman's proof of Thurston's geometrization 
and the Poincare conjectures. \medskip

The present work is a continuation of a research program on the 
Ricci flow in the setting of singular spaces. The two-dimensional Ricci flow
reduces to a scalar equation and has been studied on surfaces with conical 
singularities by Mazzeo, Rubinstein and Sesum in \cite{MRS} and Yin \cite{Yin:RFO}. 
The Ricci flow in two dimensions is equivalent to the Yamabe flow,
which has been studied in general dimension on spaces with edge singularities
by Bahuaud and the second named author in \cite{BV} and \cite{BV2}. 
\medskip 

In the setting of K\"ahler manifolds, K\"ahler-Ricci flow reduces to a scalar Monge 
Ampere equation and has been studied in case of edge singularities 
in connection to the recent resolution of the Calabi-Yau conjecture on Fano 
manifolds by Donaldson \cite{Donaldson} and Tian \cite{Tian}, by
Jeffres, Mazzeo and Rubinstein \cite{JMR}, Chen and Wang \cite{Wang1}, 
Wang \cite{Wang2}. \medskip

We should point out that in the singular setting, Ricci flow loses its uniqueness
and need not preserve the given singularity structure. In fact, Giesen and 
Topping \cite{Topping, Topping2} constructed a solution to the Ricci flow on 
surfaces with singularities, which becomes instantaneously complete. Alternatively,
Simon \cite{MS} constructed Ricci flow in dimension two and three that smoothens 
out the singularity. \medskip

In the present discussion, which can be viewed as a continuation of the 
recent work by the second named author in \cite{Ver-Ricci}, we consider 
Ricci de Turck flow preserving isolated conical singularities and establish a
stability result near Ricci-flat metrics. The crucial difficulty in our setting is 
in addition to the singularity of the underlying space the tensorial nature
of the flow, in contrast to the two-dimensional or the K\"ahler setting.
\medskip

We now proceed as follows. We first recall geometric aspects
of isolated conical singularities and define H\"older spaces adapted to the 
singular geometry and mapping properties of the heat kernel as in \cite{Ver-Ricci}. 
We then conclude the introduction with statement of the main results.

\subsection{Isolated conical singularities}

\begin{defn}\label{cone-metric}
Consider a compact smooth manifold $\overline{M}$ with boundary 
$\partial M = F$ and open interior denoted by $M$. 
Let $\overline{\cC(F)}$ be a tubular neighborhood of the boundary, 
with open interior $\cC(F) = (0,1)_x \times F$, where $x$ is a 
defining function of the boundary. Consider a smooth Riemannian metric
$g_F$ on the boundary $F$. An incomplete Riemannian metric $g$ on $M$ 
with an isolated conical singularity is then defined to be smooth away from the
boundary and 
\begin{equation*}
g \restriction \cC(F) = dx^2 + x^2 g_F + h,
\end{equation*}
where the higher order term $h$ is smooth on $\overline{\cC(F)}$
with the asymptotics $|h(x)|_{\overline{g}} = O(x)$ as $x \to 0$
for $\overline{g} = dx^2 + x^2 g_F$. 
\end{defn}

We call $(M,g)$ a compact space with an isolated conical singularity, 
or a \emph{conical manifold}. The definition naturally extends to conical manifolds 
with finitely many isolated conical singularities.
Since the analytic arguments are local in nature, we may assume without loss
of generality that $M$ has a single conical singularity only.
\medskip

In the present discussion we study Ricci-flat spaces $(M,g)$ with 
isolated conical singularities. There are various examples for such spaces. 
Consider a Ricci-flat smooth compact manifold $X$, e.g. a Calabi-Yau manifold or flat torus, 
with a discrete group $G$ acting by isometries, which is not necessarily 
acting strictly discontinuous and admits finitely many fixed points. 
The interior of its quotient $X/G$ defines a compact manifold, an orbifold, with isolated
conical singularities. There exist also examples of compact Ricci-flat manifolds
with non-orbifold isolated conical singularities, constructed by Hein and Sun 
\cite{HeSu}.

\subsection{Geometry of conical manifolds} \medskip

In this subsection we recall elements of b-calculus by Melrose \cite{Mel:TAP, Mel2}.
We choose local coordinates $(x,z)$ on the conical neighborhood 
$\cC(F)$, where $x$ is the defining function of the boundary, $n= \dim F$ and 
$(z)=(z_1,\ldots, z_n)$ are local coordinates on $F$. We
consider the Lie algebra of b-vector fields $\V_b$, which by definition are smooth 
on the closure $\overline{M}$ and tangent to the boundary $\partial M = F$. 
In local coordinates $(x,z)$, b-vector fields $\V_b$ are locally generated by 
\[
\left\{x\frac{\partial}{\partial x},  
\partial_z = \left( \frac{\partial}{\partial z_1},\dots, \frac{\partial}{\partial z_n} \right)\right\},
\]
with coefficients being smooth on $\overline{M}$. 
The b-vector fields form a spanning set of section for the 
b-tangent bundle ${}^bTM$, i.e. $\mathcal{V}_b=C^\infty(\overline{M},{}^bTM)$. 
The b-cotangent bundle ${}^bT^*M$ is generated locally by the following one-forms
\begin{align}\label{triv}
\left\{\frac{dx}{x}, dz_1,\dots,dz_n\right\}.
\end{align}
These differential forms are singular in the usual sense, but smooth as sections of 
the b-cotangent bundle ${}^bT^*M$. We extend $x:\overline{\cC(F)} \to [0,1]$ 
smoothly to $\overline{M}$, nowhere vanishing on $M$, and define the incomplete 
b-tangent space ${}^{ib}TM$ by the requirement $xC^\infty(\overline{M},{}^{ib}TM) := 
C^\infty(\overline{M},{}^{b}TM)$. The dual incomplete b-cotangent bundle 
${}^{ib}T^*M$ is related to its complete counterpart by 
\begin{align}
C^\infty(\overline{M},{}^{ib}T^*M) = 
x C^\infty(\overline{M},{}^{b}T^*M), 
\end{align}
with the spanning sections given locally over $\cC(F)$ by 
\begin{align}\label{triv2}
\left\{dx, x dz_1,\dots, x dz_n\right\}.
\end{align}
With respect to the notation we just introduced, the conical 
metric $g$ in Definition \ref{cone-metric} is a smooth section of 
the vector bundle of the symmetric $2$-tensors of the incomplete
b-cotangent bundle ${}^{ib}T^*M$, i.e. $g \in C^\infty (\textup{Sym}^2({}^{ib}T^*M))$.

\subsection{Statement of the main results} \medskip

Our main result establishes long time existence and convergence of the
Ricci de Turck flow for sufficiently small perturbations of Ricci-flat metrics
with isolated conical singularities, assuming tangential stability and some integrability
conditions. More precisely we consider a compact Ricci-flat manifold $(M,h_0)$ with an isolated 
conical singularity and $g_0$ a sufficiently small perturbation of $h_0$, not necessarily
Ricci-flat. We study the Ricci de Turck flow with $h_0$ as the reference metric, 
and $g_0$ as the initial metric 
\begin{equation}
\partial_t g(t) = -2 \textup{Ric} (g(t)) + \mathcal{L}_{W(t)} g(t), \quad g(0) = g_0,
\end{equation}
where $W(t)$ is the de Turck vector field defined in terms of the Christoffel symbols for 
the metrics $g(t)$ and $h_0$ 
\begin{equation}
W(t)^k = g(t)^{ij} \left(\Gamma^k_{ij}(g(t)) - \Gamma^k_{ij}(h_0)\right). 
\end{equation}

Our main result is as follows. 

\begin{thm}
Consider a compact Ricci-flat manifold $(M,h_0)$ with isolated conical singularities. 
Assume that $(M,h_0)$ satisfies the following three additional assumptions
\begin{enumerate}
\item[(i)] $(M,h_0)$ is tangentially stable in the sense of Definition \ref{tangential-stability-def}, 
\item[(ii)] $(M,h_0)$ is linearly stable in the sense of Definition \ref{linear-stability}, 
\item[(iii)] $(M,h_0)$ is integrable in the sense of Definition \ref{integrability}.
\end{enumerate}
If $h_0$ is not strictly tangentially stable, 
we assume in addition that the singularities are orbifold singularities. 
Then for sufficiently small perturbations $g_0$ of $h_0$, there exists a Ricci-de-Turck flow, with 
a change of reference metric at discrete times, starting at $g_0$ and converging to a Ricci-flat 
metric $h^*$ with isolated conical singularities at infinite time.
\end{thm}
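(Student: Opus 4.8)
The strategy is the standard linearization-plus-maximal-regularity scheme, adapted to the conical setting through the weighted Hölder spaces $\mathcal{C}^{k,\alpha}_{\textup{ie}}$ and the mapping properties of the heat kernel established earlier. Write $g(t) = h_0 + u(t)$, so that the Ricci de Turck equation becomes $\partial_t u = -\Delta_L u + Q(u, \nabla u, \nabla^2 u)$, where $\Delta_L$ is (up to sign and lower-order terms) the Lichnerowicz Laplacian of $h_0$ acting on symmetric $2$-tensors and $Q$ collects the quadratic-and-higher nonlinearities. First I would set up the short-time existence and a priori estimates: using tangential stability (assumption (i)) one controls the indicial roots of $\Delta_L$ at the cone tip, which is exactly what is needed to show that $e^{-t\Delta_L}$ acts as a bounded analytic semigroup on the relevant $\mathcal{C}^{2+\alpha}_{\textup{ie}}$-scale and that the inhomogeneous problem has the optimal parabolic regularity. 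A contraction-mapping argument on a small ball in the parabolic Hölder space then yields a solution on a short time interval $[0,T]$ with $\|u(t)\|$ small, starting from small $g_0$.

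Next I would upgrade this to long-time existence and convergence. Linear stability (assumption (ii)) gives that the $L^2$-spectrum of $\Delta_L$ on the relevant tensors is nonnegative, so the only obstruction to exponential decay is the kernel — the space of infinitesimal Ricci-flat deformations. Here integrability (assumption (iii)) enters: it guarantees that this kernel is tangent to an actual finite-dimensional manifold $\mathfrak{F}$ of Ricci-flat conical metrics near $h_0$, so one can use a Łojasiewicz–Simon-type inequality (for the relevant energy functional, e.g. Perelman's $\lambda$-functional or its conical analogue) to show that the flow, projected transversally to $\mathfrak{F}$, decays, while the projection onto $\mathfrak{F}$ converges. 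Concretely: decompose $u(t) = v(t) + w(t)$ with $v(t)$ the component along $\mathfrak{F}$ and $w(t)$ transversal; show $\|w(t)\|$ decays (polynomially or exponentially depending on whether $h_0$ is \emph{strictly} tangentially stable), and show $\int_0^\infty \|\partial_t v\| < \infty$, hence $v(t) \to v_\infty$ and $g(t) \to h^* := h_0 + v_\infty$, which lies on $\mathfrak{F}$ and is therefore Ricci-flat with conical singularities.

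The role of the change of reference metric at discrete times, and of the orbifold hypothesis in the non-strict case, is the following. When $h_0$ is not strictly tangentially stable, the transversal decay of $w(t)$ is only polynomial, which by itself is too weak to close the nonlinear estimate globally — the nonlinearity $Q$ could in principle drive the solution away from the ball where the semigroup estimates hold. The remedy is to periodically replace the reference metric $h_0$ by the current metric $g(t_k)$ (or by the nearest point on $\mathfrak{F}$), which resets the de Turck vector field and keeps the perturbation genuinely small in the appropriate norm; the orbifold assumption is what makes this re-centering legitimate, since on an orbifold one has a good local model (a quotient of a smooth manifold) and the moduli space $\mathfrak{F}$ and the associated function spaces behave well under such a change. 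One then runs the short-time argument on each $[t_k, t_{k+1}]$ with uniformly controlled constants and patches the pieces together, checking that the sequence of reference metrics converges — this forces $h^*$ to exist.

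The main obstacle, as the authors themselves flag, is the tensorial nature of the equation combined with the singularity: unlike the scalar $2$-dimensional or Kähler–Ricci settings, here the Lichnerowicz Laplacian acts on all of $\textup{Sym}^2({}^{ib}T^*M)$, its indicial roots are governed by the spectrum of a tensor Laplacian on the cross-section $(F, g_F)$, and one must rule out obstructing indicial roots in the critical strip. This is precisely the content of tangential stability, and the hard technical heart of the argument is verifying that tangential + linear stability together yield the clean semigroup and Łojasiewicz–Simon estimates on the conical space, with no loss coming from the non-Fredholm behavior typical of singular elliptic operators. The patching and convergence arguments, while delicate in bookkeeping, are then essentially routine once these linear-analytic inputs are in hand.
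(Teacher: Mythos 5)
Your overall skeleton (contraction mapping for each time step, linear stability plus integrability to control the kernel directions, decomposition of the flow into a component along the manifold $\mathscr{F}$ of Ricci-flat metrics and a transversal component) matches the paper's architecture, but the central analytic mechanism you invoke is not the one used, and as stated it has a genuine gap. You hang the long-time convergence on a \L{}ojasiewicz--Simon inequality for (a conical analogue of) Perelman's $\lambda$-functional. No such inequality is established in the paper, and establishing one on a space with isolated conical singularities is itself a serious open analytic problem; you cannot simply cite it. It is also unnecessary: the paper's point is that the integrability assumption lets one avoid \L{}ojasiewicz--Simon entirely. One builds the finite-dimensional manifold $\mathscr{F}$, uses the implicit function theorem to define a projection $\Pi$ with $g-\Pi(g)\in(\ker\Delta_{L,\Pi(g)})^\perp$, proves a \emph{uniform} positive lower bound $\lambda_1>0$ for the nonzero spectrum of $\Delta_{L,h}$ for all $h\in\mathscr{F}$ near $h_0$ (Theorem \ref{uniformboundlambda}, which itself needs the Hardy inequality and the uniform coercivity estimate of Theorem \ref{strong_stability}), and then runs a fixed-point argument on each interval $[kT,(k+1)T]$ with the reference metric reset to $\Pi g(kT)$. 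The gain factor $1/N$ per step produces a geometric series, so $(\Pi g(kT))_k$ is Cauchy and converges to the limit metric $h^*\in\mathscr{F}$; no energy functional or gradient-inequality argument appears.

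Two of your attributions are also incorrect, and they matter for the logic. First, the transversal decay is exponential in \emph{both} the strictly and the non-strictly tangentially stable cases; tangential stability concerns the tangential operator $\square_L$ on the cross-section $(F,g_F)$, i.e.\ the indicial behaviour at the cone tip and the heat-kernel mapping properties \eqref{mapping1} versus \eqref{mapping2}, not the $L^2$-spectrum of $\Delta_L$ on $M$, so your ``polynomial decay in the non-strict case'' scenario does not occur and cannot be the reason for changing the reference metric. Second, the change of reference metric at discrete times is needed in \emph{all} cases: the kernel of $\Delta_{L,h}$ is nontrivial (it contains at least the tangent directions to $\mathscr{F}$), the semigroup does not decay along it, and re-projecting onto $\mathscr{F}$ at each step is what absorbs this drift so that the remaining part is always orthogonal to the current kernel and decays at the uniform rate $\lambda_1$. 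The orbifold hypothesis in the non-strict case enters elsewhere: when strict tangential stability fails, the weighted H\"older mapping property of the heat operator is only proved for flat (orbifold) cones, where $\ker\square_L$ consists of parallel sections and the analysis reduces to the scalar case; it has nothing to do with legitimizing the re-centering. To repair your proposal you would either have to prove the \L{}ojasiewicz--Simon inequality in this singular setting, or replace that step by the projection-plus-uniform-spectral-gap iteration described above.
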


We point out that linear stability and integrability are also imposed in the classical case to get stability
of the Ricci flow. The additional feature of isolated conical singularities is the assumption of tangential stability 
and the fact that we change the reference metric at discrete times in order to converge
to a Ricci-flat metric. \medskip

A crucial part of our paper is devoted to a detailed discussion of the tangential 
stability and integrability assumptions. For the former assumption we prove the following 
general characterization.

\begin{thm}
	Let $(F,g_F)$, $n\geq 3$ be a compact Einstein manifold with constant $(n-1)$. 
	We write $\Delta_E$ for its Einstein operator, and denote the Laplace Beltrami 
	operator by $\Delta$. Then $(F,g_F)$ is tangentially stable if and only if $\mathrm{Spec}(\Delta_E|_{TT})\geq0$ and $\mathrm{Spec}(\Delta)\setminus \left\{0\right\}\cap (n,2(n+1))=\varnothing$. Similarly, $(F,g_F)$ is strictly tangentially stable if and only if $\mathrm{Spec}(\Delta_E|_{TT})>0$ and $\mathrm{Spec}(\Delta)\setminus \left\{0\right\}\cap [n,2(n+1)]=\varnothing$. 
\end{thm}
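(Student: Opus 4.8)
The plan is to reduce tangential stability of $(F,g_F)$ --- equivalently, of the conical manifold --- to an explicit spectral comparison on the cross-section, by separating variables in the Einstein operator of the exact metric cone $\bigl(C(F)=\mathbb{R}^+_r\times F,\ g_C=dr^2+r^2g_F\bigr)$, which is Ricci-flat precisely because $\ric(g_F)=(n-1)g_F$. Recall that tangential stability (Definition \ref{tangential-stability-def}) is a condition on the indicial roots of this cone Einstein operator: since $g_C$ is a warped product, the Einstein operator acts on sections of $\mathrm{Sym}^2({}^{ib}T^*C(F))$ in the separated form $\Delta_E^{C(F)}=-\partial_r^2-\frac{n}{r}\partial_r+\frac{1}{r^2}\square_F$ for a self-adjoint tangential operator $\square_F$ on a bundle over $F$, with indicial roots $\gamma^\pm(\lambda)=-\frac{n-1}{2}\pm\sqrt{\bigl(\frac{n-1}{2}\bigr)^2+\lambda}$ for $\lambda\in\mathrm{Spec}(\square_F)$; tangential stability says that all these roots are real and lie outside the critical window dictated by the heat-kernel mapping properties on the weighted H\"older spaces $\ho$, $\hho$ used in the paper, with the window closed in the non-strict and open in the strict case. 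Thus the task is to compute $\mathrm{Spec}(\square_F)$ and compare it with those thresholds.

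First I would fix the cone-adapted decomposition $h=f\,dr^2+2r\,dr\odot\omega+r^2k$ of a symmetric $2$-tensor on $C(F)$, with $f\in C^\infty(F)$, $\omega\in\Omega^1(F)$, $k\in C^\infty(\mathrm{Sym}^2T^*F)$ (all depending on $r$), refined by the Hodge decomposition $\omega=d\phi+\omega^\perp$ with $\omega^\perp$ coclosed and by the standard splitting of $k$ into its trace part $\psi\,g_F$, a Hessian part $\nabla^2\xi$, a divergence-free vector part, and a transverse-traceless part $k^{TT}$. One checks that $\Delta_E^{C(F)}$ preserves this decomposition and block-diagonalizes into: (I) a coupled system in the scalar data $(f,\psi,\phi,\xi)$ whose tangential operator is assembled from $\Delta=\Delta_{g_F}$ on $C^\infty(F)$; (II) a coupled system in the vector data whose tangential operator is assembled from the Hodge Laplacian $\Delta_1$ on coclosed $1$-forms on $F$; and (III) a single scalar ODE in $r$ for $k^{TT}$ whose tangential operator is $\Delta_E|_{TT}$ up to an explicit additive constant. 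Each block is then analyzed mode by mode after diagonalizing $\Delta$, $\Delta_1$ and $\Delta_E|_{TT}$ on $F$, which turns the question into linear algebra in the relevant eigenvalue.

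Next I would treat the three blocks separately. For (III) one shows that the additive constant is exactly such that $\gamma^\pm$ are real and leave the critical window precisely when the $\Delta_E|_{TT}$-eigenvalue is $\geq0$ (respectively $>0$), which gives the first stated condition. For (II) I would invoke the Bochner identity $\Delta_1=\nabla^*\nabla+\ric=\nabla^*\nabla+(n-1)$ on $F$ together with the fact that a coclosed eigen-$1$-form saturating the relevant bound is metrically dual to a Killing field of $g_F$; this shows that the vector block always stays in the stable range and imposes no further condition, consistent with the absence of $\Delta_1$ from the statement. Block (I) is the core: for each $\mu\in\mathrm{Spec}(\Delta|_{C^\infty(F)})$ the tangential matrix of the $(f,\psi,\phi,\xi)$ system is an explicit matrix in $\mu$ and $n$, and after diagonalizing it and solving the resulting indicial equations the a priori complicated real-root and window conditions collapse, after algebraic cancellation, to the single requirement $\mu\notin(n,2(n+1))$ (respectively $\mu\notin[n,2(n+1)]$), while the mode $\mu=0$ is always admissible. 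Intersecting the conditions coming from (I)--(III) then yields exactly the two spectral conditions in the statement.

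The main obstacle is block (I). Carrying the coupled scalar system faithfully through the decomposition, diagonalizing its $\mu$-dependent tangential matrix, solving all the resulting indicial equations, and showing that the outcome simplifies to the clean window $(n,2(n+1))$ requires careful bookkeeping and several algebraic identities; moreover one must correctly isolate and discard the pure-gauge solutions --- the $\mu=0$ modes, and the endpoint modes at $\mu=n$ and $\mu=2(n+1)$, which on the round sphere are accounted for by the conformal symmetries of the flat cone --- so that they do not spuriously register as instabilities. The remaining work, tracking closed versus open intervals throughout in order to distinguish tangential stability from strict tangential stability, is routine; the hypothesis $n\geq3$ enters only to ensure that the transverse-traceless part of $\mathrm{Sym}^2T^*F$ and the Einstein operator acting on it are the relevant objects.
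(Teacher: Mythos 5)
Your plan is correct and follows essentially the same route as the paper's proof: there, symmetric $2$-tensors on the Ricci-flat cone are decomposed (following Kr\"oncke's warped-product decomposition) into blocks generated by TT-tensors, coclosed $1$-forms and scalar eigenfunctions, the $1$-form block is shown to be always strictly positive, the TT block yields the $\Delta_E|_{TT}$ condition directly, and your ``block (I)'' bookkeeping is carried out explicitly as (semi-)positivity of a $3\times 3$ matrix via its principal minors, whose determinant factors as $\lambda n^2(\lambda-2n-2)(\lambda+2n-6)$ up to $O(\epsilon)$, producing exactly the window $(n,2(n+1))$, resp.\ $[n,2(n+1)]$. The only caveats are cosmetic: the paper argues directly with non-negativity/positivity of the tangential operator rather than with indicial roots (in your gloss the open/closed excluded windows for strict versus non-strict stability are swapped), and nothing is ``discarded as pure gauge'' --- the endpoint eigenvalues simply make the scalar block positive semi-definite, which is precisely why $\mathbb{S}^n$ and $\mathbb{RP}^n$ are tangentially but not strictly tangentially stable.
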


We explain that any spherical space form is tangentially stable. 
However, the spaces $\mathbb{S}^n$ and $\R\mathbb{P}^n$ are not strictly tangentially stable. 
This property may also hold for other spherical space forms. We also provide a
detailed list of tangentially stable Einstein manifolds that are symmetric spaces. 

\begin{thm}
	Let $(F^n,g_F)$, $n\geq 2$ be a closed Einstein manifold with constant $(n-1)$, 
	which is a symmetric space of compact type. If it is a simple Lie group $G$, 
	it is strictly tangentially stable if $G$ is one of the following spaces:
	\begin{align}
	\mathrm{Spin}(p)\text{ }(p\geq 6,p\neq 7),\qquad \mathrm{E}_6,
	\qquad\mathrm{E}_7,\qquad\mathrm{E}_8,\qquad \mathrm{F}_4.
	\end{align}
	If the cross section is a rank-$1$ symmetric space of compact type $G/K$, 
	$(M,g)$ is strictly tangentially stable if  $G$ is one of the following real Grasmannians
	\begin{equation}
	\begin{aligned}
	&\frac{\mathrm{SO}(2q+2p+1)}{\mathrm{SO}(2q+1)\times \mathrm{SO}(2p)}\text{ }(p\geq 2,q\geq 1),\qquad
	\frac{\mathrm{SO}(8)}{\mathrm{SO}(5)\times\mathrm{SO}(3)},\\
	&\frac{\mathrm{SO}(2p)}{\mathrm{SO}(p)\times \mathrm{SO}(p)}\text{ }(p\geq 4),\qquad
	\frac{\mathrm{SO}(2p+2)}{\mathrm{SO}(p+2)\times \mathrm{SO}(p)}\text{ }(p\geq 4),\\
	&\frac{\mathrm{SO}(2p)}{\mathrm{SO}(2p-q)\times \mathrm{SO}(q)}\text{ }(p-2\geq q\geq 3),
	\end{aligned}
	\end{equation}
	or one of the following spaces:
	\begin{equation}
	\begin{aligned}
	\mathrm{SU}(2p)/\mathrm{SO}(p)\text{ }(n\geq 6),\qquad
	&\mathrm{E}_6/[\mathrm{Sp}(4)/\left\{\pm I\right\}],\qquad \quad
	\mathrm{E}_6/\mathrm{SU}(2)\cdot \mathrm{SU}(6),\\
	\mathrm{E}_7/[\mathrm{SU}(8)/\left\{\pm I\right\}],\qquad&
	\mathrm{E}_7/\mathrm{SO}(12)\cdot\mathrm{SU}(2),\qquad
	\mathrm{E}_8/\mathrm{SO}(16),\\
	\mathrm{E}_8/\mathrm{E}_7\cdot \mathrm{SU}(2),\qquad&
	\mathrm{F}_4/Sp(3)\cdot\mathrm{SU}(2).
	\end{aligned}
	\end{equation}
	\end{thm}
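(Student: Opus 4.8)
The plan is to reduce, via the spectral characterization in the preceding theorem, the classification to a single eigenvalue comparison, and then to run that comparison over Cartan's list. For $n\ge 3$ the preceding theorem identifies strict tangential stability of a compact Einstein manifold $(F,g_F)$ with $\mathrm{Ric}_{g_F}=(n-1)g_F$ with the conjunction of (a) $\mathrm{Spec}(\Delta_E|_{TT})>0$ and (b) $\mathrm{Spec}(\Delta)\setminus\{0\}$ being disjoint from $[n,2(n+1)]$; the two values $n=2$, where $F$ is $S^2$ or $\mathbb{RP}^2$, are rank one, fall outside all the listed families, and are checked by hand. The key observation is that by the Lichnerowicz--Obata estimate, valid precisely because $\mathrm{Ric}_{g_F}=(n-1)g_F$, the first nonzero eigenvalue of $\Delta$ satisfies $\lambda_1\ge n$, with equality only for the round sphere. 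Since for a symmetric space $F\ne S^n$ every nonzero Laplace eigenvalue then exceeds $n$, condition (b) is equivalent to the single inequality $\lambda_1(\Delta)>2(n+1)$. This already explains the remarks preceding the statement: $S^n$ fails (b) because $\lambda_1=n$, and $\mathbb{RP}^n$ fails it because its first nonzero eigenvalue equals $2(n+1)$, the right endpoint of the forbidden interval.

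Next I would dispose of condition (a). On an irreducible symmetric space of compact type, Peter--Weyl together with Frobenius reciprocity diagonalizes both $\Delta$ acting on functions and $\Delta_E|_{TT}$ into Casimir eigenvalues, which are explicitly computable from highest weights by Freudenthal's formula. By the known stability results for Einstein metrics on symmetric spaces of compact type one has $\Delta_E|_{TT}\ge 0$ throughout, and the finitely many families carrying infinitesimal Einstein deformations (the classification going back to Koiso) are exactly those where the inequality is not strict; these are excluded at the outset. For every other irreducible symmetric space of compact type, condition (a) holds automatically, so the problem collapses to verifying the gap $\lambda_1(\Delta)>2(n+1)$ in each remaining case.

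It then remains to carry out that verification over the Cartan classification. In the group case $F=G$ with the bi-invariant metric rescaled so that $\mathrm{Ric}=(n-1)g$, $n=\dim G$, the eigenvalue $\lambda_1$ is the rescaled Casimir eigenvalue of the smallest nontrivial irreducible representation of $G$; evaluating this type by type through $A_\ell, B_\ell, C_\ell, D_\ell, G_2, F_4, E_6, E_7, E_8$, and accounting for the low-rank exceptional isomorphisms among the classical groups together with the borderline position of $\mathrm{Spin}(7)$, singles out exactly $\mathrm{Spin}(p)$ with $p\ge 6$, $p\ne 7$, and $F_4,E_6,E_7,E_8$. In the non-group case, for each remaining family $G/K$ I would identify the smallest $K$-spherical representation of $G$, compute its Casimir eigenvalue and the dimension $n=\dim(G/K)$, compute the Einstein constant of the standard symmetric metric in terms of the root data so as to fix the normalization to $\mathrm{Ric}=(n-1)g$, and then compare $\lambda_1$ with $2(n+1)$; the inequality holds exactly on the listed real Grassmannians and exceptional quotients, with the rank restrictions in the statement coming from the borderline members of each family.

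The main obstacle is the bookkeeping in this last step. Uniformly across the Cartan families one must (i) pin down which representation realizes $\lambda_1$, which requires the list of spherical representations and, when the isotropy representation is reducible, some extra care; (ii) carry out the Casimir and dimension computations, including the scaling factor between the naturally normalized symmetric metric and the one with $\mathrm{Ric}=(n-1)g$; and (iii) resolve the near-equality cases at the endpoint $2(n+1)$, which decide membership for the smallest members of each family and constitute the genuinely delicate part of the argument. Wherever possible I would import the Casimir computations already recorded in the literature on linear stability of Einstein metrics on symmetric spaces, so that only the comparison with the explicit interval $[n,2(n+1)]$ has to be performed here.
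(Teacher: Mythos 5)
Your route is essentially the paper's: invoke the preceding spectral characterization, use Lichnerowicz--Obata to turn the condition on $\mathrm{Spec}(\Delta)\setminus\{0\}$ into the single gap inequality $\lambda>2(n+1)$ (normalized, $\Lambda>2\tfrac{n+1}{n-1}$), and then sweep through Cartan's list using Casimir eigenvalue data already available in the literature. This is exactly what the paper does, reading off both the normalized first eigenvalue $\Lambda$ and the sign of $\Delta_E|_{TT}$ case by case from Tables 2 and 3 of \cite{Kro17b}.

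There is, however, a genuine error in how you dispose of condition (a). It is not true that $\Delta_E|_{TT}\geq 0$ for every irreducible symmetric space of compact type, with non-strictness occurring only on Koiso's families of infinitesimal Einstein deformations: the tables underlying the paper's proof (and the results of Cao--He \cite{CH15} and Kr\"oncke they encode) record genuinely unstable entries, e.g.\ $\mathrm{SU}(p+1)$ $(p\geq 2)$, $\mathrm{Spin}(5)$, $\mathrm{Sp}(p)$ $(p\geq 3)$, $\mathrm{SU}(2p)/\mathrm{Sp}(p)$, $\mathbb{HP}^p$ and the quaternionic Grassmannians, $\mathrm{E}_6/\mathrm{F}_4$, $\mathrm{F}_4/\mathrm{Spin}(9)$, for which $\Delta_E|_{TT}$ has negative eigenvalues. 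Hence the classification does not collapse to the single inequality $\lambda_1>2(n+1)$; strict positivity of the Einstein operator on $TT$-tensors must be verified case by case alongside the Laplace gap, which is precisely the ``stability'' column of the tables the paper checks. In the present lists your shortcut happens not to change the outcome, since every $\Delta_E$-unstable entry also has $\Lambda\leq 2<2\tfrac{n+1}{n-1}$ and so fails the gap condition anyway, but as written your argument would certify strict tangential stability for any unstable space with a large Laplace gap and omits half of the verification actually required.
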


We also study examples of compact manifolds with isolated conical singularities
where the integrability condition is satisfied. This includes flat spaces with orbifold singularities
as well as K\"ahler manifolds. More precisely we establish the following result. 

\begin{thm}
	Let $(M,h_0)$ be a Ricci-flat K\"{a}hler manifold where the cross section 
	is either strictly tangentially stable or a space form. 
	Then $h_0$ is linearly stable and integrable.
\end{thm}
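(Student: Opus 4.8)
The plan is to exploit the special structure of K\"ahler geometry to reduce both linear stability and integrability to well-understood statements about the complex Laplacian acting on forms. First I would recall that for a Ricci-flat K\"ahler metric $h_0$, the infinitesimal Einstein deformations split via the K\"ahler identities: the space of transverse-traceless symmetric $2$-tensors in the kernel of the Einstein operator $\Delta_E$ decomposes into a part coming from $(1,1)$-forms (deformations of the K\"ahler class) and a part coming from $(0,2)$-forms, i.e. from $H^{0,2}$ and its conjugate (complex-structure deformations, related to $H^1(M,T^{1,0}M)$ by contraction with the holomorphic volume form in the Calabi-Yau case). On a Ricci-flat K\"ahler manifold one has $\Delta_E = 2\Delta_{\bar\partial}$ on such tensors after the standard identification, so $\Delta_E \geq 0$ automatically, and moreover its kernel is exactly the space of harmonic forms of the relevant bidegree. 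This immediately gives linear stability in the sense of Definition \ref{linear-stability}: there are no negative eigenvalues of the relevant stability operator, the obstruction being only the cross-section hypothesis, which by the characterization in the tangential-stability theorem is handled precisely by the assumption that the cross section is strictly tangentially stable or a space form (guaranteeing that the indicial/tangential analysis produces a self-adjoint extension with the expected spectrum and no $L^2$-cokernel beyond the harmonic tensors).

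Next, for integrability, the strategy is the classical Bogomolov--Tian--Todorov argument adapted to the conical setting. One shows that every infinitesimal Einstein deformation $h$ in the kernel of $\Delta_E$ is tangent to an actual curve of Ricci-flat K\"ahler metrics with isolated conical singularities. Concretely: a $(1,1)$-class deformation integrates trivially by rescaling/moving within the K\"ahler cone (Yau's theorem, in the conical version one needs the singular Calabi--Yau existence result, which is available when the cross section is a space form or when one stays in the orbifold regime); a complex-structure deformation $\varphi \in H^1(M,T^{1,0}M)$ integrates by the Tian--Todorov lemma, i.e. the formal power series $\varphi_t = t\varphi + \sum_{k\geq 2} t^k \varphi_k$ solving the Maurer--Cartan equation $\bar\partial \varphi_t + \tfrac12[\varphi_t,\varphi_t]=0$ converges, using the $\partial\bar\partial$-lemma and the $\ell$-th order obstruction vanishing because $[\varphi,\varphi]$ is $\bar\partial$-exact whenever $\varphi$ is $\bar\partial$-closed (the key identity $[\varphi,\psi] = \partial(\varphi\lrcorner\,\Omega)\lrcorner\,\Omega^{-1} \pm \dots$ contracting against the holomorphic volume form). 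One then invokes the singular Calabi--Yau theorem again to equip each nearby complex structure with a Ricci-flat K\"ahler metric keeping the conical singularities, and checks that the resulting family is smooth in the H\"older spaces $\hhok$ of the paper, so that its tangent space at $h_0$ is all of $\ker \Delta_E$ — which is exactly Definition \ref{integrability}.

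The main obstacle, and where most of the real work lies, is the analytic bookkeeping at the cone tip: all of the Hodge-theoretic and Tian--Todorov machinery is classical on closed manifolds, but here one must verify that (a) the relevant Hodge decompositions and $\partial\bar\partial$-lemma hold in the weighted/b-Sobolev or H\"older categories adapted to the conical metric, (b) the solutions $\varphi_k$ of the obstruction equations, obtained by inverting $\bar\partial\bar\partial^*$ (or the complex Laplacian) on forms, lie in the correct function spaces with no loss of conormal regularity and preserve the asymptotic structure $dx^2 + x^2 g_F + O(x)$, and (c) the a priori estimates for the singular Calabi--Yau equation are uniform along the family. Step (b) is precisely where the cross-section assumption re-enters: strict tangential stability (or the space-form/orbifold condition) ensures the complex Laplacian on the cone has no indicial roots in the critical strip that would obstruct solvability or force extra boundary terms, so the formal series can be controlled term by term. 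I would isolate this as a lemma ("the Dolbeault complex on a Ricci-flat K\"ahler cone with strictly tangentially stable cross section is Fredholm with the expected cohomology and satisfies the $\partial\bar\partial$-lemma") and then run the otherwise-standard argument on top of it.
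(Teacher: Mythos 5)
Your treatment of linear stability (the hermitian/anti-hermitian splitting and the identification of $\Delta_L$ with the complex and Hodge Laplacians, i.e.\ Koiso's argument) and your plan for integrating the complex-structure directions (Maurer--Cartan series controlled by a $\partial\bar\partial$-lemma in weighted spaces) coincide with what the paper actually does in Theorem \ref{cpx-str-int} and the surrounding lemmas. The genuine problem is your final step. You propose to equip each deformed complex structure and K\"ahler class with a Ricci-flat metric by invoking a ``singular Calabi--Yau existence theorem'' together with uniform a priori estimates along the family, and you yourself concede that such a theorem is only available in the orbifold/space-form regime. But the case that carries the weight of the statement is precisely the non-orbifold cone with strictly tangentially stable cross section, where no such global existence result is on the table; leaving it as a black box means the proof does not close in the main case. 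The paper avoids this entirely by a purely perturbative argument: it sets up the map $G(J,\kappa,u)=\partial\bar\partial\,Cal(\omega(J)+pr_J(\kappa),u)-\partial\bar\partial f_{\omega(J)+pr_J(\kappa)}$ and applies the implicit function theorem, the key point being that the linearization in $u$ is $\partial\bar\partial\Delta$, an isomorphism on weighted Sobolev spaces of functions with vanishing mean (non-exceptional weight). Since Definition \ref{integrability} only requires a finite-dimensional family of Ricci-flat metrics near $h_0$, this local IFT statement suffices, and no global Monge--Amp\`ere theory or uniform estimates are needed.

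A second, related omission: before one can pose any Calabi-type problem on a deformed complex structure $J_t$, one needs a K\"ahler metric compatible with $J_t$ that stays close to $h_0$ in the weighted category. On a closed manifold this is Kodaira--Spencer's stability of the K\"ahler property; in the conical setting it is Proposition \ref{kahler-def} of the paper, proved via the fourth-order operator $E_t$, the constancy of $\dim\ker_{L^2}(E_t)$, and projections in weighted spaces, and it is also where the correct tangent map $d\Phi_{J_0}$ is computed so that the final family has tangent space exactly $\ker\Delta_{L,h_0}$ (the pure-trace directions being added by scaling). Your sentence that ``a $(1,1)$-class deformation integrates trivially by moving within the K\"ahler cone'' glosses over both of these points; without the compatible K\"ahler form for $J_t$ and without the tangency computation, one gets neither the Monge--Amp\`ere setup nor the verification that $T_{h_0}\cF=\ker\Delta_{L,h_0}$ demanded by Definition \ref{integrability}. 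So the architecture of your argument is right, but as written it rests on an unavailable global existence theorem and skips the Kodaira--Spencer step; replacing the former by the IFT on $G$ and supplying the latter is exactly the content of the paper's proof.
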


This paper is organized as follows. After a discussion of the Lichnerowicz Laplacian 
and its Friedrichs extension in \S \ref{laplace-section}, we proceed with a detailed characterization 
of tangentially stable Einstein manifolds in \S \ref{tangential-stability-section}. In \S \ref{mapping-section}
we review the mapping properties of the heat operator as established in \cite{Ver-Ricci}. In \S
\ref{large-section} we establish exponential large time estimates for the heat operator norms.
\S \ref{projection-section} is devoted to analysis of the integrability condition and classes of manifolds
where this condition is satisfied. We conclude the paper with a proof of our main result in 
\S \ref{long-section}. \medskip

\emph{Acknowledgements:} The second author thanks Jan Swoboda
for important discussions about aspects of Ricci flow. Both authors thank 
the Geometry at Infinity Priority program of the German Research Foundation 
DFG for its financial support and for providing a platform for joint research. 
The authors greatfully acknowledge hospitality of the Mathematical 
Institutes at Hamburg and Oldenburg Universities.

\section{The Lichnerowicz Laplacian on conical manifolds}\label{laplace-section}

Let $(M,h)$ be a compact Ricci-flat space with an isolated conical singularity. Let $S:=\textup{Sym}^2({}^{ib}T^*M)$ be the bundle of symmetric $2$-tensors. The Lichnerowicz Laplacian $\Delta_L: C_0^{\infty}(M,S)\to C_0^{\infty}(M,S)$ is a differential operator of second order, defined as
\begin{align}\label{defLL}
\Delta_L\omega=\Delta \omega-2\mathring{R}\omega,\qquad (\mathring{R}\omega)_{ij}:=R_{iklj}\omega^{kl}.
\end{align}
Here, the rough Laplacian $\Delta= \nabla^*\nabla$ is defined with the sign convention such that its eigenvalues are non-negative and the Riemannian curvature tensor is used with the sign convention such that $\mathring{R}h=h$.
We choose local coordinates $(x,z)$ over the singular neighborhood 
$\cC(F) = (0,1)_x \times F$. In the previous paper \cite{Ver-Ricci} 
we have introduced a decomposition of compactly supported smooth sections $C^\infty_0(\cC(F), 
\textup{Sym}^2({}^{ib}T^*M) \restriction \cC(F))$ 
\begin{equation}
\begin{split}
C^\infty_0(\cC(F), S \restriction \cC(F)) &\to C^\infty_0((0,1), C^\infty(F) 
\times \Omega^1(F) \times \textup{Sym}^2(T^*F)),\\
\w &\mapsto \left( \w(\partial_x, \partial_x), \w (\partial_x, \cdot ), 
\w (\cdot, \cdot)\right),
\end{split}
\end{equation}
where $\Omega^1(F)$ denotes differential $1$-forms on $F$.
Under such a decomposition, the Lichnerowicz Laplace operator 
$\Delta_L$ associated to the singular Riemannian metric $g$ 
attains the following form over $\cC(F)$
\begin{equation}
\Delta_L = - \frac{\partial^2}{\partial x^2} - \frac{n}{x} \frac{\partial}{\partial x}
+ \frac{\square_L}{x^2} + \mathscr{O},
\end{equation}
where $\square_L$ is a differential operator on $C^\infty(F) 
\times \Omega^1(F) \times \textup{Sym}^2(T^*F)$ and the 
higher order term $\mathscr{O} \in x^{-1} \V_b^2$ is a second order
differential operator with one order higher asymptotic behaviour at $x=0$.

\begin{defn}\label{tangential-stability-def}
Let $(F^n,g_F)$ be a closed Einstein manifold\footnote{If $(M,g)$ is a 
Ricci-flat space with an isolated conical singularity,
then the cross section $(F,g_F)$ of the cone is automatically Einstein with 
Einstein constant $(n-1)$.}  with Einstein constant $(n-1)$.
Then $(F^n,g_F)$ is called (strictly) tangentially stable if the tangential operator 
of the Lichnerowicz Laplacian on its cone restricted to tracefree tensors is 
non-negative (resp. strictly positive).
\end{defn}
Let $L^2(M,S)$ be the completion of $C^{\infty}_0(M,S)$ with respect to the natural $L^2$-norm induced by the metric $h$.
The inner product on $L^2(M,S)$ induced by $h$ is denoted by $( \cdot, \cdot )_{L^2}$.
We define the maximal closed extension of $\Delta_L$ in $L^2(M,S)$ with domain
\begin{equation}
\dom(\Delta_{L, \max}) := \{\w \in L^2(M,S) \mid \Delta_L \w \in L^2(M,S)\},
\end{equation}
where $\Delta_L \w$ is defined distributionally in terms of the distribution 
$T$ acting on test functions $\phi \in C^{\infty}_0(M,S)$ by $T(\phi) := ( \w, \Delta_L \phi )_{L^2}$. 
We require that the distribution $T$ in fact arises from some $\eta \in L^2(M,S)$ by 
$T(\phi) = (\eta, \phi)_{L^2}$ and we set $\Delta_L \w:= \eta  \in L^2(M,S)$. \medskip

We may also define the minimal closed extension of $\Delta_L$ in $L^2(M,S)$ as the domain of 
the graph closure of $\Delta_L$ acting on $C^{\infty}_0(M,S)$. More precisely, the minimal domain is defined by
\begin{equation*}
\begin{split}
\dom(\Delta_{L, \min}) := \{\w \in \dom(\Delta_{L, \max}) \mid \exists (\w_n)_{n\in \N} \subset C^{\infty}_0(M,S): \\
\w_n \xrightarrow{n\to \infty} \w, \quad \Delta_L \w_n \xrightarrow{n\to \infty} \Delta_L \w \ \textup{in} \ L^2(M,S)\}.
\end{split}
\end{equation*}

Let $(\lambda, \w_\lambda)$ be the set of eigenvalues and 
corresponding eigentensors of the tangential operator $\square_L$.
By the assumption of tangential stability, $\lambda \geq 0$, and we may define
\begin{equation}\label{nu}
\nu(\lambda) := \sqrt{\lambda + 
\left(\frac{n-1}{2}\right)^2}.
\end{equation}
Standard arguments, see e.g. \cite[Lemma 2.2]{MazVer} or \cite{KLP:FDG}, 
cf. the exposition in \cite{Ver:ZDR}, show that for each $\w \in \dom(\Delta_{L, \max})$
there exist constants $c^\pm_\lambda, \nu(\lambda) \in [0,1)$, 
depending only on $\w$, such that $\w$ admits a partial asymptotic expansion as $x\to 0$
\begin{equation}\label{cone-asymptotics}
\begin{split}
\w & = \sum_{\nu(\lambda) = 0} \left(c^+_{\lambda}(\w) x^{ - \frac{(n-1)}{2}}
+ c^-_{\lambda}(\w) x^{ - \frac{(n-1)}{2}} \log(x) \right) \cdot \omega_\lambda 
\\ & + \sum_{\nu(\lambda) \in (0,1)} \left(c^+_{\lambda}(\w) x^{\nu(\lambda) - \frac{(n-1)}{2}}
+ c^-_{\lambda}(\w) x^{-\nu(\lambda) - \frac{(n-1)}{2}} \right) \cdot \omega_\lambda 
\\ & + \widetilde{\w}, \quad \widetilde{\w} \in \dom(\Delta_{L, \min}).
\end{split}
\end{equation}
All self-adjoint extensions for $\Delta_L$ can be classified by 
boundary conditions on the coefficients in the asymptotic 
expansion of solutions in the maximal domain, see e.g. Kirsten, 
Loya and Park \cite[Proposition 3.3]{KLP:FDG}. In particular we define
a self-adjoint extension of $\Delta_L$ on $C^\infty_0(M,S) \subset L^2(M,S)$ 
with domain
\begin{equation}\label{Friedrichs-domain}
\dom(\Delta_{L}) := \{\w \in \dom(\Delta_{L, \max}) \mid 
c^-_{\lambda}(\w) = 0 \ \textup{for} \ \nu(\lambda) \in [0,1)\}.
\end{equation}

\begin{prop}\label{Friedrichs}
Assume that $(M,g)$ is tangentially stable and that the Lichnerowicz 
Laplacian $\Delta_L$ with domain $C^\infty_0(M,S)$ is bounded from 
below by a constant\footnote{The case of $C=0$ is commonly referred 
to as linear stability in the literature.} $C \in \R$. Then the domain of the Friedrichs 
self-adjoint extension $\Delta_L^{\cF}$ of the Lichnerowicz Laplacian is given by
$\dom(\Delta_{L})$ and $\Delta_L^{\cF}$ is bounded from below by $C$.
\end{prop}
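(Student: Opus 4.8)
The plan is to identify the Friedrichs extension of $\Delta_L$ by matching its canonical variational characterization with the explicit domain $\dom(\Delta_L)$ described via the vanishing of the logarithmic/subdominant coefficients $c^-_\lambda$. First I would recall that, since $\Delta_L$ with domain $C^\infty_0(M,S)$ is symmetric and bounded below by $C$, the Friedrichs extension $\Delta_L^\cF$ exists and is the unique self-adjoint extension whose domain is contained in the form domain, i.e. the completion of $C^\infty_0(M,S)$ with respect to the inner product $\langle (\Delta_L - C + 1)\w, \w\rangle_{L^2}$; moreover $\Delta_L^\cF$ inherits the lower bound $C$ automatically from the form. So the content of the proposition is entirely the identification $\dom(\Delta_L^\cF) = \dom(\Delta_L)$.

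The key step is the asymptotic analysis near the cone tip. Using the separation of variables form $\Delta_L = -\partial_x^2 - \tfrac{n}{x}\partial_x + x^{-2}\square_L + \mathscr{O}$ and expanding an element $\w$ of the maximal domain into eigenmodes $\omega_\lambda$ of the tangential operator, one reduces to a family of ODEs whose indicial roots are $-\tfrac{n-1}{2} \pm \nu(\lambda)$ with $\nu(\lambda) = \sqrt{\lambda + (\tfrac{n-1}{2})^2}$; this gives exactly the partial expansion \eqref{cone-asymptotics}. I would then compute the Friedrichs form domain by checking which modes are square-integrable together with their first b-derivatives: the mode $x^{-\nu(\lambda) - (n-1)/2}\omega_\lambda$ (and the $\log x$ mode when $\nu(\lambda) = 0$) fails to lie in the form domain precisely because its $x$-derivative is not in $L^2(x^n\,dx)$ near $x=0$ in the borderline range $\nu(\lambda)\in[0,1)$, whereas $x^{\nu(\lambda)-(n-1)/2}\omega_\lambda$ does lie in it. Hence membership of $\w$ in the Friedrichs form domain forces $c^-_\lambda(\w)=0$ for all $\nu(\lambda)\in[0,1)$, and conversely these conditions single out a self-adjoint extension (by the boundary-triple/Lagrangian-subspace classification of Kirsten–Loya–Park cited in the excerpt). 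Since there is a unique self-adjoint extension with domain inside the form domain, $\dom(\Delta_L^\cF)$ must be the maximal domain cut out by these conditions, which is exactly $\dom(\Delta_L)$ as defined in \eqref{Friedrichs-domain}.

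The main obstacle is the presence of the higher-order perturbation $\mathscr{O} \in x^{-1}\V_b^2$, which is not exactly homogeneous under the scaling $x\mapsto \mu x$ and so prevents a naive exact separation of variables. I would handle this by noting that $\mathscr{O}$ is subleading by one order in $x$, so it does not affect the leading indicial roots or the dichotomy between the $c^+$ and $c^-$ coefficients in the range $\nu(\lambda)\in[0,1)$; this is precisely the regime where the standard results on closed extensions of cone operators (Gil–Mendoza, Lesch, Brüning–Seeley, as invoked via \cite{KLP:FDG}, \cite{Ver:ZDR}) apply, and the expansion \eqref{cone-asymptotics} already incorporates the perturbation through the $O(x^{-(n-3)/2}\log x)$ error term. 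A secondary technical point is that $\square_L$ acts on the triple $C^\infty(F)\times\Omega^1(F)\times\mathrm{Sym}^2(T^*F)$ and is only non-negative on tracefree tensors under the tangential stability hypothesis; here I would use exactly that hypothesis to guarantee $\lambda \ge 0$, hence $\nu(\lambda)\ge \tfrac{n-1}{2}$ and in particular $\nu(\lambda)$ real, so that the expansion \eqref{cone-asymptotics} and the Friedrichs boundary condition are meaningful. With these points in place the identification of domains, and the transfer of the lower bound $C$ from the quadratic form to $\Delta_L^\cF$, is routine.
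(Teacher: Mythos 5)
Your proposal is correct and follows essentially the same route as the paper: the paper's proof simply cites Friedrichs--Stone for existence of $\Delta_L^{\cF}$ with the same lower bound $C$, and then identifies $\dom(\Delta_L^{\cF})$ with $\dom(\Delta_L)$ by localizing near the cone tip and invoking the known characterization of the Friedrichs domain for cone operators (Br\"uning--Lesch, Kirsten--Loya--Park, Vertman), which is exactly the form-domain/indicial-root analysis you sketch. The only cosmetic caveat is that at the borderline $\nu(\lambda)=0$ the naive ``derivative in $L^2$'' test does not by itself separate the $x^{-(n-1)/2}$ mode from the $x^{-(n-1)/2}\log x$ mode, so there (as you implicitly do) one must lean on the cited characterization rather than the heuristic criterion.
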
 

\begin{proof}
Existence of the Friedrichs extension $\Delta_L^{\cF}$ with the same lower
bound as the symmetric densely defined $\Delta_L$ is due to Friedrichs and 
Stone, see Riesz and Nagy \cite[Theorem on p. 330]{RN}. The fact that the 
domain of $\Delta_L^{\cF}$ is given by $\dom(\Delta_{L})$ follows by localizing 
near the conical singularity and using the characterization of the Friedrichs domain 
in \cite[Lemma 3.1 (1)]{BL-cone} as well as e.g. \cite[Corollary 2.14]{Ver:ZDR}.
\end{proof}

\begin{defn}\label{linear-stability}
We say that $(M,g)$ is linearly stable if the the Lichnerowicz 
Laplacian $\Delta_L$ with domain $C^\infty_0(M,S)$ is non-negative.
\end{defn}

Later on, we drop the upper index $\cF$ from notation and 
denote the Friedrichs self-adjoint extension by $\Delta_L$ again. 
Moreover, let us point out that the arguments, constructions 
and definitions extend to compact spaces with finitely many isolated
conical singularities.

\section{Tangential stability of conical manifolds}\label{tangential-stability-section}

In this section, we aim to characterize (strict) tangential stability in terms of eigenvalues of geometric operators on the cross-section of a cone.
In the theorem below, $\Delta_E$ denotes the Einstein operator on symmetric two-tensors over $F$, which is given by $\Delta_E=\nabla^*\nabla-2\mathring{R}$, where $\Delta= \nabla^*\nabla$ is the rough Laplacian on $F$ and $\mathring{R}$ is defined as in 
\eqref{defLL} in terms of the curvature operator of $(F,g_F)$. We write $\Delta$ for the Laplace Beltrami operator on $F$. 
Moreover, $TT$ denotes the space of symmetric two-tensors which are trace-free and divergence-free at each point.
\begin{thm}
	Let $(F,g_F)$, $n\geq 3$ be a compact Einstein manifold with constant $n-1$. Then $(F,g_F)$ is tangentially stable if and only if $\mathrm{Spec}(\Delta_E|_{TT})\geq0$ and $\mathrm{Spec}(\Delta)\setminus \left\{0\right\}\cap (n,2(n+1))=\varnothing$. Similarly, $(M,g)$ is strictly tangentially stable if and only if $\mathrm{Spec}(\Delta_E|_{TT})>0$ and $\mathrm{Spec}(\Delta)\setminus \left\{0\right\}\cap [n,2(n+1)]=\varnothing$. 
\end{thm}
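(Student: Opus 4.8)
The plan is to diagonalize the tangential operator $\square_L$ of the Lichnerowicz Laplacian on the cone by decomposing sections of $\mathrm{Sym}^2({}^{ib}T^*M)\restriction\cC(F)$ according to the eigentensors of natural operators on the cross section $(F,g_F)$, and then to read off (strict) positivity of $\square_L$ restricted to tracefree tensors eigenspace by eigenspace. Concretely, I would use the decomposition $\w\mapsto(\w(\partial_x,\partial_x),\w(\partial_x,\cdot),\w(\cdot,\cdot))$ recalled in \S\ref{laplace-section}, which splits a symmetric $2$-tensor on the cone into a scalar part $f\in C^\infty(F)$, a $1$-form part $\alpha\in\Omega^1(F)$, and a symmetric $2$-tensor part $h\in\mathrm{Sym}^2(T^*F)$ on the cross section. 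I would then further decompose each piece into its irreducible constituents: for the scalar part, eigenfunctions of $\Delta$; for the $1$-form part, the Hodge decomposition into exact forms $dF$ (generated by eigenfunctions of $\Delta$) and coclosed forms; and for the $2$-tensor part, the pointwise decomposition $h = h^{TT} + (\mathcal{L}_X g_F)_0 + \varphi\, g_F$ into a $TT$-tensor, a traceless Lie-derivative part governed by a $1$-form, and a pure-trace part governed by a function. This is the standard York/Berger–Ebin type decomposition adapted to the Einstein cross section.

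The key computation is then to write down the matrix of $\square_L$ on each of these finite-rank blocks. Restricting to \emph{tracefree} tensors on the cone removes one scalar degree of freedom and couples the remaining scalar/$1$-form/$2$-tensor data. On the $TT$-block, $\square_L$ acts diagonally and its eigenvalues are affine functions of the eigenvalues of $\Delta_E|_{TT}$; nonnegativity (resp.\ positivity) of this block is therefore equivalent to $\mathrm{Spec}(\Delta_E|_{TT})\geq 0$ (resp.\ $>0$) — here the fact that the Einstein constant is $(n-1)$ and the cone is Ricci-flat pins down all the constants. On the blocks built from a single eigenfunction $f$ of $\Delta$ with eigenvalue $\mu$ (coupling the scalar part, the exact $1$-form part, and the conformal $2$-tensor part), $\square_L$ becomes an explicit $3\times 3$ (or smaller, after accounting for the trace-free constraint and for the special eigenvalues $\mu=0$ and $\mu=n$ where some constituents degenerate) symmetric matrix with entries polynomial in $\mu$. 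I would compute its characteristic polynomial and determine exactly for which $\mu$ it fails to be positive semidefinite; the claim is that this happens precisely in the range $\mu\in(n,2(n+1))$ for the nonstrict statement and $\mu\in[n,2(n+1)]$ for the strict one, with the endpoints $\mu=0$ (constants) and $\mu=n$ (lowest nonzero eigenvalue on the sphere, corresponding to conformal Killing data) requiring separate bookkeeping. The coclosed $1$-form block and the block coupling coclosed $1$-forms with their associated $2$-tensors contribute eigenvalues that I expect to be automatically nonnegative under the Einstein condition (via Bochner/Lichnerowicz estimates on $F$, e.g.\ the Obata-type bound), and hence impose no extra constraint; this needs to be checked but should follow from $\ric_{g_F}=(n-1)g_F$.

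The main obstacle will be the careful algebra of the mixed scalar/vector/tensor block and, more subtly, the treatment of the borderline eigenvalues where the decomposition is not a direct sum of the generic pieces. Specifically, at $\mu=n$ the $1$-form $df$ is a conformal Killing field and the traceless part of $\mathcal{L}_{df^\sharp}g_F$ can vanish or be linearly dependent on other generators, so the rank of the block drops and one must verify by hand whether $\square_L$ is merely nonnegative or strictly positive there — this is exactly what distinguishes the open interval $(n,2(n+1))$ from the closed $[n,2(n+1)]$ and explains why $\mathbb{S}^n$ and $\R\mathbb{P}^n$, which realize $\mu=n$, are tangentially stable but not strictly so. A secondary point of care is that the eigenvalue parameter relevant to tangential stability is $\nu(\lambda)^2 = \lambda + \left(\frac{n-1}{2}\right)^2$ with $\lambda\in\mathrm{Spec}(\square_L)$, so I must translate "$\lambda\geq 0$" through the indicial-root formula \eqref{nu} and confirm that the threshold $\lambda=0$ is the correct one for (strict) tangential stability as defined in Definition \ref{tangential-stability-def}; once the block-diagonal spectrum of $\square_L$ is in hand this is bookkeeping, but it is where sign conventions are easiest to get wrong. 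I would organize the proof as a sequence of lemmas, one per block, culminating in assembling the constraints into the stated spectral conditions.
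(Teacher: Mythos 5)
Your plan is essentially the paper's proof: there one decomposes trace-free symmetric two-tensors on the cone into a TT block, a coclosed one-form block and a scalar block (the decomposition $V_{1,i},V_{3,i},V_{4,i}$ from Kr\"oncke's earlier work), and checks (semi)positivity of $\square_L$ by exactly the explicit $2\times 2$ and $3\times 3$ matrix computations you describe, with the TT block giving the $\Delta_E|_{TT}$ condition, the coclosed one-form block unconditionally positive, and the scalar block producing the threshold $2(n+1)$, the degenerate eigenvalues $\mu=0$ and $\mu=n$ being handled separately because some generators vanish. Two small corrections to your commentary: the failure of strict tangential stability for $\mathbb{S}^n$ and $\mathbb{RP}^n$ comes from the eigenvalue $2(n+1)$, not from $\mu=n$ (which is not even an eigenvalue of $\mathbb{RP}^n$), and Definition \ref{tangential-stability-def} refers directly to (semi)positivity of the tangential operator $\square_L$ on tracefree tensors, so no translation through the indicial roots $\nu(\lambda)$ is required.
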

\begin{proof}
	In order to analyse the tangential operator of the Lichnerwicz Laplacian, we use the decomposition of symmetric two-tensors on a Ricci-flat cone that was established in \cite{Kro17}. For the rest of this section, we use the notation in \cite[Section 2]{Kro17} and and the calculations in Section 3.1 of the same paper where we remove all terms containing radial-derivatives in order to obtain expressions for the tangential operator.
More precisely, we write
	\begin{equation}
	\begin{split}
	\{h_i\} \ &\textup{- basis of} \ L^2(TT), \quad \Delta_E h_i = \kappa_i h_i, \quad V_{1,i} := \langle r^2 h_i \rangle, \\
	\{\omega_i\} \ &\textup{- basis of coclosed sections} \ L^2(T^*F), \quad \Delta \omega_i = \mu_i \omega_i, \\ 
	&\quad V_{3,i} :=  \langle r^2 \delta^* \omega_i\rangle \oplus \langle dr \odot r \omega_i\rangle, \\
	\{v_i\} \ &\textup{- basis of} \ L^2(F), \quad \Delta v_i = \lambda_i v_i, \\
	&\quad V_{4,i} := \langle r^2 (n \nabla^2 v_i + \Delta v_i g)\rangle \oplus \langle dr \odot r \nabla v_i\rangle 
	\\ & \quad \oplus \langle v_i (r^2 g - ndr \otimes dr)\rangle.
	\end{split}
	\end{equation}
	Here $\langle\rangle$ denotes the $L^2$-span of a sequence of tensors and $\omega\odot\overline{\omega}:=\omega\otimes\overline{\omega}+\overline{\omega}\otimes\omega$ is the symmetric tensor product.
	Moreover, $\Delta$ in $\Delta \omega_i$ denotes the connection Laplacian, while $\Delta$ in $\Delta v_i$ 
	denotes the Laplace Beltrami operator.
	The spaces $V_{1,i}, V_{3,i}, V_{4,i}$, with $L^2(0,1)$ coefficients, span all trace-free sections $L^2(S_0 \restriction F)$ over $F$, 
	and are invariant under the action of the Lichnerowicz Laplacian.
	At first, if $\tilde{h} = r^2 h_i \in V_{1,i}$,
	\begin{align*}
	(\square_L\tilde{h},\tilde{h})_{L^2}=\kappa_i\Vert \tilde{h} \Vert_{L^2},
	\end{align*}
	such that $\square_L$ is positive (non-negative) on $V_{1,i}$ for all $i$ if and only if all eigenvalues $\kappa_i$ of the Einstein operator on $TT$-tensors are positive (non-negative).\\
	Let $\tilde{h} = \tilde{h}_1 + \tilde{h}_2 = \varphi r^2 \delta^* \omega_i + \psi dr \odot r \omega_i \in V_{3,i}$ with $\varphi,\psi\in\R$. In this case, we have the scalar products
	\begin{align*}	(\square_L \tilde{h}_1, \tilde{h}_1)_{L^2} &= \frac{\varphi^2}{2} (\mu_i - (n-1))^2,\\
	(\square_L \tilde{h}_2, \tilde{h}_2)_{L^2}& = \psi^2 [2\mu_i + (2n+6)],\\
	(\square_L \tilde{h}_1, \tilde{h}_2)_{L^2}& = -2 (\mu_i - (n-1)) \psi \varphi.
	\end{align*}
	Taking $r^2 \delta^* \omega_i$ and $dr \odot r \omega_i$ as a basis, $\square_L$ respects the subspace and acts as $2 \times 2$-matrix
	\begin{equation*}
	\left( \begin{array}{cc}
	\frac{1}{2} (\mu_i - (n-1))^2 & -2(\mu_i - (n-1))\\
	-2(\mu_i - (n-1)) & 2 \mu_i + (2n + 6)
	\end{array} \right).
	\end{equation*}
	We obtain
	\begin{align*}
	\vert 4 (\mu_i - (n-1)) \varphi \psi \vert & = \vert 2 \cdot \underbrace{\frac{1}{\sqrt{2+\epsilon}} (\mu_i - (n-1)) \varphi}_{= a} \cdot \underbrace{\sqrt{2+\epsilon} \cdot 2 \psi}_{= b} \vert\\
	& \le a^2 + b^2 = \frac{1}{2+\epsilon} (\mu_i - (n-1)) \varphi^2 + 4(2+\epsilon) \psi^2,
	\end{align*}
	and therefore,
	\begin{align*}
	(\square_L \tilde{h},\tilde{h})_{L^2} & \ge \varphi^2(\mu_i - (n-1)) [\frac{1}{2} - \frac{1}{2+\epsilon}] + \psi^2 [2 \mu_i + (2n + 6) - 8 -4 \epsilon]\\
	& = 2[\frac{1}{2} - \frac{1}{2+\epsilon}] \cdot \Vert \tilde{h}_1 \Vert_{L^2}^2 + [\mu_i + n + 3-4-3\epsilon] \Vert \tilde{h}_2 \Vert_{L^2}^2 \\ &\ge C(\mu_i) \Vert \tilde{h} \Vert_{L^2}^2,
	\end{align*}
	with  $C(\mu_i)>0$ because $\mu_i>0$ for all $i$ since $n\geq 2$. We also used that 
	\begin{equation*}
	\begin{split}
	\Vert \tilde{h}_1 \Vert_{L^2}^2 & = \frac{1}{2} (\mu_i - (n-1)) \cdot \vert \varphi \vert^2,\qquad 
	\Vert \tilde{h}_2 \Vert_{L^2}^2  = 2 \vert \psi \vert^2. 
	\end{split}
	\end{equation*}
	Therefore, $\square_L$ is always strictly positive on the spaces $ V_{3,i}$.
	It remains to consider the case 
	\begin{align*}\tilde{h} = \tilde{h}_1 + \tilde{h}_2 + \tilde{h}_3 = \varphi r^2 (n \nabla^2 v_i + \Delta v_i g) + \psi dr \odot r \nabla v_i + \mathcal{X} v_i (r^2 g - ndr \otimes dr) \in V_{4,i},
	\end{align*}
	with $\varphi,\psi,\mathcal{X}\in\R$
	which is the most delicate one. We have the scalar products	\begin{equation*}
	\begin{split}
	(\square_L \tilde{h}_1,\tilde{h}_1)_{L^2} & = n(n-1) \lambda_i (\lambda_i-n) (\lambda_i - 2(n-1)) \varphi^2,\\
	(\square_L \tilde{h}_2,\tilde{h}_2)_{L^2} & = [2\lambda_i (\lambda_i - (n-1)) + (2n+6) \lambda_i] \psi^2,\\
	(\square_L \tilde{h}_3,\tilde{h}_3)_{L^2} & = [n\{(n+1) \lambda_i - 2(n+1) \} + 2n^2(n+3)] \mathcal{X}^2,\\
	(\square_L \tilde{h}_1,\tilde{h}_2)_{L^2} & = -4(n-1) \lambda_i (\lambda_i -n) \psi \varphi,\\
	(\square_L \tilde{h}_2,\tilde{h}_3)_{L^2} & = 4(n+1) \lambda_i \psi \mathcal{X},\\
	(\square_L \tilde{h}_1,\tilde{h}_3)_{L^2} & = 0,
	\end{split}
	\end{equation*}
	and the norms
	\begin{equation*}
	\begin{split}
	\Vert \tilde{h}_1 \Vert_{L^2}^2 & = n(n-1) \lambda_i (\lambda_i - n) \varphi^2,\\
	\Vert \tilde{h}_2 \Vert_{L^2}^2 & = 2 \varphi^2 \lambda_i,\\
	\Vert \tilde{h}_3 \Vert_{L^2}^2 & = (n+1)n.
	\end{split}
	\end{equation*}
	Consider $(\square_L - \epsilon I)$. It acts as a matrix $A=(a_{ij})_{1\leq n\leq 3}$,
	whose coefficients are given by
	\begin{align*}
	a_{11}&=n(n-1)\lambda_i (\lambda_i - n)[\lambda_i -2(n-1)-\epsilon],\\
	a_{22}&=2 \lambda_i [\lambda_i - (n-1) -\epsilon + n + 3],\\
	a_{33}&= n\{(n+1) \lambda_i - 2(n-1) - \epsilon(n+1) + 2n(n+3) \},\\
	a_{12}&=a_{21}=-4(n-1)\lambda_i (\lambda_i -n),\\
	a_{23}&=a_{32}=4(n+1)\lambda_i,\\
	a_{13}&=a_{31}=0.
	\end{align*}
	In order to prove positivity (resp.) nonnegativity of this matrix, we consider its principal minors $A_{33}$ (which is the lower right entry), $A_{23}$ ( the lower right $2\times 2$-matrix) and $A$ ( the whole matrix).
	At first,
	\begin{equation*}
	\begin{split}
	A_{33} & = n\{(n+1) \lambda_i - 2 (n-1) - \epsilon (n+1) + 2 n (n+3) \}\\
	& = n\{(n+1)\lambda_i + 2n^2 + 6n - 2n + 2 - \epsilon n - \epsilon \}\\
	& = n \{(n+1) \lambda_i + 2n^2 + (4-\epsilon) n + (2-\epsilon) \} > 0
	\end{split}
	\end{equation*}
	for any $\epsilon < 2$. Observe that in the case $\lambda_i=0$, $\tilde{h}_1 \equiv 0$ and $ \tilde{h}_2 \equiv 0$, so that $V_{4i} = span\{\tilde{h}_3\}$ and
	hence, $(\square_L-\epsilon I)$ acts as $A_{33} > 0$ for $\epsilon< 2$. Therefore, we may from now on assume that $\lambda_i > 0$, which means that actually $\lambda_i \ge n$ (due eigenvalue estimates for Einstein manifolds, see e.g. \cite{Ob62}) with $\lambda_i = n$ only for $\mathbb{S}^n$. By considering the matrix
	\begin{equation*}
	\left( \begin{array}{cc}
	2[\lambda_i + 4 - \epsilon] & 4(n+1)\lambda_i\\
	4(n+1) & n\{(n+1) \lambda_i + 2n^2 + (4-\epsilon)n + (2-\epsilon) \}
	\end{array} \right),
	\end{equation*}
	from which one recovers $A_{23}$ by multiplying the first column by $\lambda_i$,
	we see that
	\begin{equation*}
	\begin{split}
	\frac{\det A_{23}}{\lambda_i} & = 2[\lambda_i + 4 - \epsilon]n \cdot [(n+1)\lambda_i + \underbrace{2n^2 + (3-\epsilon)n + (2-\epsilon)}_{=p(n)}] - 16(n+1)^2 \lambda_i\\
	& = 2[\lambda_i + (4-\epsilon)] [(n^2 + n) \lambda_i + n p(n)] - 16(n^2 + 2n +1) \lambda_i\\
	& = 2\lambda_i^2 (n^2 + n) + 2 \lambda_i n p(n) + (8-2\epsilon) (n^2 + n) \lambda_i - 16(n^2+2n+1)\lambda_i\\
	& \qquad + (8-2\epsilon) n p(n)\\
	& = 2(n^2+n) \lambda_i^2 + [2np(n) + (8-2\epsilon)(n^2+n) - 16(n^2+2n+1)] \lambda_i\\
	& \qquad + (8-2\epsilon) n p(n)\\
	& = \underbrace{2n(n+1)}_{>0} \lambda_i^2 + [2np(n) + (n+1) \{(8-2\epsilon)n-16(n+1)\}] \lambda_i\\ &\qquad + \underbrace{(8-2\epsilon)np(n)}_{>0}.
	\end{split}
	\end{equation*}
	The $\lambda_i$-coefficient satisfies
	\begin{equation*}
	\begin{split}
	2np(n) + (n+1)\{(8-2\epsilon)n - 16n -16 \} & = 4n^3 - 4\epsilon n^2 - 20 n - 4 \epsilon n -16\\
	& = 4(n+1) (n^2-n-4) -4 \epsilon n (n+1)\\
	& = 4(n+1) [n^2 - (1+\epsilon) n -4]\\
	& > 0
	\end{split}
	\end{equation*}
	for $n \ge 3$ and $\epsilon$ sufficiently small. Therefore, we obtain $\det A_{23}>0$ in these cases. For $n=2$ we compute explicitly
	\begin{align*}
	\lambda_i^{-1} \det A_{23} & =	12 \lambda_i^2 + [12(-(1+\epsilon)\cdot 2)] \lambda_i + (16-2\epsilon + 2 - \epsilon) \cdot (8-2\epsilon) \cdot 2\\ & = 12\lambda_i^2-24(1+\epsilon) \lambda_i + 16\cdot 18 + \mathcal{O}(\epsilon).
	\end{align*}
	Note that $12x^2-24x+16\cdot 18$ has no zeros, so that for $\epsilon$ sufficiently small the expression is always positive.
Before we compute the full determinant of $A$, we remark that in the case $\lambda_i=n$, the tensor $\tilde{h}_1$ is vanishing so that in this case, the matrix $A$ describing $\square_L$ on $V_{4,i}$ reduces to the matrix $A_{23}$ which just has been considered. Therefore, there is nothing more to prove in this case and we may assume $\lambda=\lambda_i>0$ from now on.
\medskip
	
	To compute the full determinant of $A$, we first consider the matrix
	\begin{align*}
	\left( \begin{array}{ccc}
	n[\lambda - 2(n-1) - \epsilon] & -2(n-1)(\lambda - n) & 0\\
	-4 & \underbrace{\lambda-(n-1)-\epsilon+n+3}_{=\lambda + 4-\epsilon} & 4 \lambda\\
	0& 2(n+1) & n\{\lambda + 2(n+1) - \epsilon \} 
	\end{array} \right) ,
	\end{align*}
	from which we recover $A$ by mutiplying the three columns by $(n-1)\lambda(\lambda-n)$ and $2\lambda,(n+1)$, respectively.
	We get
	\begin{equation*}
	\begin{split}
	[(n-1) \lambda (\lambda -n) 2 \cdot &\lambda \cdot (n+1)]^{-1} \det A \\
	& = n[\lambda - 2(n-1) - \epsilon] [(\lambda + 4 - \epsilon) \cdot n (\lambda + 2(n+1) - \epsilon)\\
	& \qquad - 8 \lambda (n+1)] + 4\{-2 (n-1)(\lambda -n) \cdot n(\lambda + 2(n+1) - \epsilon) \}\\
	& = -4 n^4 \lambda + 4 \epsilon n^4 + 8n^3 \lambda - 8 \epsilon n^3 + n^2 \lambda^3 - 3\epsilon n^2 \lambda^2 - 8 n^2 \lambda^2\\
	&\qquad + 3\epsilon^2n^2\lambda + 12n^2\lambda - \epsilon^3n^2 + 8\epsilon^2n^2-12\epsilon n^2\\
	& = \lambda^3 n^2 + \lambda^2[-3\epsilon n^2 - 8 n^2]\\
	& \qquad + \lambda[-4n^4 + 8n^3+3 \epsilon^2 n^2 + 12n^2] + 4 \epsilon n^4 - 8 \epsilon n^3 -\epsilon^2n^2\\
	& \qquad + 8\epsilon^2 n^2 - 12 \epsilon n^2\\
	& = \lambda^3 n^2 - 8 n^2 \lambda^2 + \lambda[-4n^4 + 8n^3 + 12n^2] + \mathcal{O}(\epsilon)\\
	& = \lambda n^2 [\lambda^2 - 8 \lambda - 4n^2 + 8n + 12] + \mathcal{O}(\epsilon)\\
	& = \lambda n^2(\lambda-2n-2)(\lambda+2n-6) + \mathcal{O}(\epsilon).\\	
	\end{split}
	\end{equation*}
	We get positivity of $\det(A)$ if and only if $\lambda>2n+2$ for all positive eigenvalues $\lambda$. Moreover, $\det(A)=0$ for $\epsilon>0$ if $\lambda=2n+2$. Due to positivity of the determinants of the other principal minors in this case, $A$ is positive semidefinite if $\epsilon=0$ and $\lambda=2n+2$.
\end{proof}
\begin{ex}
	Any spherical space form is tangentially stable: We have $\Delta_E|_{TT}>0$ in this case due to an unpublished result by Bourguignon (see e.g. \cite[Corollary 12.72]{Besse}). Moreover, $\mathrm{Spec}(\Delta)\setminus \left\{0\right\}\cap (n,2(n+1))=\varnothing$ holds for the sphere and this property also descents to any of its quotients. The spaces $\mathbb{S}^n$ and $\mathbb{RP}^n$ are not strictly tangentially stable as $2(n+1)$ is contained in their spectrum. This property may also hold for other spherical space forms.
\end{ex}
\begin{thm}
	Let $(F^n,g_F)$, $n\geq 2$ be a closed Einstein manifold with constant $n-1$, which is a symmetric space of compact type. If it is a simple Lie group $G$, it is strictly tangentially stable if $G$ is one of the following spaces:
	\begin{align}
	\mathrm{Spin}(p)\text{ }(p\geq 6,p\neq 7),\qquad \mathrm{E}_6,\qquad\mathrm{E}_7,\qquad\mathrm{E}_8,\qquad \mathrm{F}_4.
	\end{align}
	On the other hand, it is tangentially unstable, if $G$ is one of the following spaces:
	\begin{align}
	\mathrm{SU}(p+1)\text{ }(p\geq 3),\qquad \mathrm{Spin}(5),\qquad \mathrm{Spin}(7),\qquad\mathrm{Sp}(p)\text{ }(p\geq 3),\qquad\mathrm{G}_2.
	\end{align}
	If the manifold $(F,g_F)$ is a rank-1 symmetric space $G/K$ of compact type, it is strictly tangentially stable if  it is one of the following real Grasmannians
	\begin{equation}
	\begin{aligned}
	&\frac{\mathrm{SO}(2q+2p+1)}{\mathrm{SO}(2q+1)\times \mathrm{SO}(2p)}\text{ }(p\geq 2,q\geq 1),\qquad
	\frac{\mathrm{SO}(8)}{\mathrm{SO}(5)\times\mathrm{SO}(3)},\\
	&\frac{\mathrm{SO}(2p)}{\mathrm{SO}(p)\times \mathrm{SO}(p)}\text{ }(p\geq 4),\qquad
	\frac{\mathrm{SO}(2p+2)}{\mathrm{SO}(p+2)\times \mathrm{SO}(p)}\text{ }(p\geq 4),\\
	&\frac{\mathrm{SO}(2p)}{\mathrm{SO}(2p-q)\times \mathrm{SO}(q)}\text{ }(p-2\geq q\geq 3),
	\end{aligned}
	\end{equation}
	or one of the following spaces:
	\begin{equation}
	\begin{aligned}
	\mathrm{SU}(2p)/\mathrm{SO}(p)\text{ }(p\geq 6),\qquad
	&\mathrm{E}_6/[\mathrm{Sp}(4)/\left\{\pm I\right\}],\qquad \quad
	\mathrm{E}_6/\mathrm{SU}(2)\cdot \mathrm{SU}(6),\\
	\mathrm{E}_7/[\mathrm{SU}(8)/\left\{\pm I\right\}],\qquad&
	\mathrm{E}_7/\mathrm{SO}(12)\cdot\mathrm{SU}(2),\qquad
	\mathrm{E}_8/\mathrm{SO}(16),\\
	\mathrm{E}_8/\mathrm{E}_7\cdot \mathrm{SU}(2),\qquad&
	\mathrm{F}_4/Sp(3)\cdot\mathrm{SU}(2).
	\end{aligned}
	\end{equation}
	On the other hand it is unstable if $G/K$ is $\mathbb{CP}^p, \mathbb{HP}^p$, $p\geq2$,
	one of the (real, complex and quaternionic) Grasmannians
	\begin{equation}
	\begin{aligned}
	&\frac{\mathrm{SO}(2p+2)}{\mathrm{SO}(2p)\times \mathrm{SO}(2)}\text{ }(p\geq 3),\qquad\quad
	\frac{\mathrm{SO}(5)}{\mathrm{SO}(3)\times\mathrm{SO}(2)},\\
	&\frac{\mathrm{SO}(2p+3)}{\mathrm{SO}(2p+1)\times \mathrm{SO}(2)}\text{ }(p\geq 2),\qquad
	\frac{\mathrm{U}(q+p)}{\mathrm{U}(q)\times \mathrm{U}(p)}\text{ }(q\geq p\geq 2),\\
	&\frac{\mathrm{Sp}(q+p)}{\mathrm{Sp}(q)\times \mathrm{Sp}(p)}\text{ }(q \geq p\geq 2),
	\end{aligned}
	\end{equation}
	or one of the following spaces:
	\begin{equation}
	\begin{aligned}
	\mathrm{SU}(2p)/\mathrm{SO}(p)\text{ }(5\geq p\geq 3),\quad
	&\mathrm{SU}(2p)/\mathrm{Sp}(p)\text{ }(p\geq 3),\quad
	\mathrm{Sp}(p)/\mathrm{U}(p)\text{ }(p\geq 3),\\
	\mathrm{SO}(2p)/\mathrm{U}(p)\text{ }(p\geq 5),\quad
	&\mathrm{E}_6/\mathrm{SO}(10)\cdot \mathrm{SO}(2),\quad
	\mathrm{E}_6/\mathrm{F}_4,\\
	\mathrm{E}_7/\mathrm{E}_6\cdot \mathrm{SO}(2),\quad&
	\mathrm{F}_4/\mathrm{Spin}(9),\quad
	\mathrm{G}_2/\mathrm{SO}(4).
	\end{aligned}
	\end{equation}
\end{thm}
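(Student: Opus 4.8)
The result is, in effect, a corollary of the characterization theorem just proved, combined with known spectral data for symmetric spaces. Recall from that theorem that a closed Einstein manifold $(F^n,g_F)$ with constant $n-1$ is strictly tangentially stable if and only if $\mathrm{Spec}(\Delta_E|_{TT})>0$ and $(\mathrm{Spec}(\Delta)\setminus\{0\})\cap[n,2(n+1)]=\varnothing$, and it is tangentially unstable as soon as $\Delta_E|_{TT}$ has a negative eigenvalue or $(\mathrm{Spec}(\Delta)\setminus\{0\})$ meets the open interval $(n,2(n+1))$. So the task is, for each space on the two lists, normalized so that the Einstein constant is $n-1$, to determine (a) the sign of the lowest eigenvalue of the Einstein operator $\Delta_E$ on $TT$-tensors, and (b) the first nonzero eigenvalue $\lambda_1$ of the Laplace--Beltrami operator on functions, and then to compare $\lambda_1$ with the sharp thresholds $n$ and $2(n+1)$.

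For (a) I would use the standard Peter--Weyl description of homogeneous tensor bundles over a compact symmetric space $G/K$: by Frobenius reciprocity, $L^2(\mathrm{Sym}^2_0 T^*F)$ decomposes into $G$-isotypic pieces labelled by the irreducible $G$-representations $V_\gamma$ whose $K$-restriction contains the isotropy representation on $\mathrm{Sym}^2_0\mathfrak p$, and on each such piece $\Delta_E$ acts by a scalar computed from the Casimir eigenvalue $\langle\gamma,\gamma+2\rho\rangle$ together with the Casimir of the isotropy representation. Deciding whether the infimum of these scalars is positive, zero, or negative is exactly the $\nu$-linear stability problem for the Einstein metric, which has been settled for all compact symmetric spaces by Koiso and subsequent refinements; in the simple-group case it reduces to the analogous computation of $\Delta_E$ on the adjoint bundle. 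I would quote those classifications rather than redo the representation theory. In particular, positivity $\Delta_E|_{TT}>0$ for spherical space forms follows, as in the previous example, from Bourguignon's result cited there.

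For (b), the spectrum of $\Delta$ on functions of $G/K$ consists of the Casimir eigenvalues $\langle\gamma,\gamma+2\rho\rangle$ of the spherical (class-one) representations, and after rescaling so that $\mathrm{Ric}=(n-1)g$ the value of $\lambda_1$ is given by the classical tables for compact symmetric spaces (see e.g.\ \cite{Besse} and the references there). By the Lichnerowicz--Obata estimate \cite{Ob62}, $\lambda_1\geq n$ with equality exactly for the round sphere, so for all non-spherical entries only the comparison with the upper endpoint is relevant: the space is removed from the strictly stable list once $\lambda_1\leq 2(n+1)$, and is tangentially unstable once $\lambda_1<2(n+1)$. The round sphere ($\lambda_1=n$) and $\mathbb{RP}^n$ ($\lambda_1=2(n+1)$) sit exactly on the closed interval, which is why they are tangentially stable but not strictly, as recorded in the example. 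A representative computation of this type is $\mathbb{CP}^p$, for which one finds $\lambda_1=2n-2$, hence $\lambda_1\in(n,2(n+1))$ for $p\geq 2$ and tangential instability; the remaining Hermitian symmetric spaces and the complex and quaternionic Grassmannians on the unstable list fail the test for the same reason, while the real Grassmannians and exceptional quotients on the stable list pass both tests, as do $\mathrm{Spin}(p)$ for large $p$ and the exceptional simple groups (whereas, e.g., the small orthogonal groups $\mathrm{Spin}(5),\mathrm{Spin}(7)$ are destabilised already on functions by their spinor representations).

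It then remains to run through the classification case by case, matching each entry against the two criteria: it belongs to the strictly tangentially stable list iff $\Delta_E|_{TT}>0$ and $\lambda_1>2(n+1)$, and to the unstable list iff $\Delta_E|_{TT}$ has a negative eigenvalue or $\lambda_1<2(n+1)$. I expect the main obstacle to be the length and delicacy of this bookkeeping rather than any single hard step: because the thresholds $n$ and $2(n+1)$ are sharp, every normalization constant in the Casimir computations must be tracked exactly (boundary cases such as $\lambda_1=2(n+1)$ must be distinguished from $\lambda_1>2(n+1)$), transitions within families such as $\mathrm{SU}(2p)/\mathrm{SO}(p)$ changing behaviour around $p=6$ have to be pinned down precisely, and the low-rank exceptional isomorphisms among the simple groups (for instance $\mathrm{Spin}(5)\cong\mathrm{Sp}(2)$, $\mathrm{Spin}(6)\cong\mathrm{SU}(4)$) must be taken into account when reconciling the stated ranges.
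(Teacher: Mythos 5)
Your proposal is correct and follows essentially the same route as the paper: the authors likewise reduce the statement to the characterization theorem (positivity of $\Delta_E|_{TT}$ together with the condition that no nonzero Laplace eigenvalue lies in $[n,2(n+1)]$, i.e.\ $\lambda_1>2(n+1)$ after Lichnerowicz--Obata) and then verify these two conditions case by case from known spectral data for compact symmetric spaces, quoting the stability and normalized-eigenvalue tables of \cite{Kro17b} rather than redoing the representation-theoretic computations.
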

\begin{remark}This theorem shows that the sphere is the only example in this class which is tangentially stable but not strictly tangentially stable.
\end{remark}
\begin{proof}
	We analyse the tables $2$ and $3$ in \cite{Kro17b}. In table 2, we have to check, which of the Lie groups $G$ are (strictly) stable (which means $\Delta_E|_{TT}\geq0$, resp.\ $\Delta_E|_{TT}>0$) and for which all non-zero eigenvalues $\lambda$ of the Laplacian satisfy
	the condition $\lambda\geq 2(\dimn(G)+1)$ (resp.\ $>$) or equivalently,
	\begin{align*}
	\Lambda\geq2\frac{\dimn(G)+1}{\dimn(G)-1}
	\end{align*}
	(resp.\ $>$). Here, $\Lambda$ is the eigenvalue $\lambda$ normalized by the Einstein constant $\dimn(G)-1$. By checking these conditions, we obtain the following table:
	\begin{center}
		\renewcommand{\arraystretch}{1.5}
		\begin{longtable}{|l|l|c|c|l|l|}
			\hline
			type & $\mathrm{G}$ & $\dimn(\mathrm{G})$ & $\Lambda$ & stability & tang.\ stability \\
			\hline
			$\mathrm{A}_p$	& $\mathrm{SU}(p+1)$, $p\geq 2$			& $p^2-1$			
			& $\frac{2p(p+2)}{(p+1)^2}$ & unstable & unstable\\
			\hline
			\multirow{3}{*}{$\mathrm{B}_n$}
			& $\mathrm{Spin}(5)	$		& $10$		& $\frac{5}{3}$ & unstable & unstable\\
			& $\mathrm{Spin}(7)	$		& $21$	& $\frac{21}{10}$ & s.\ stable & unstable \\
			& $\mathrm{Spin}(2p+1)$, $n\geq 4$		& $2p(p+1)$		& $\frac{4p}{2p-1}$ & s.\ stable & s.\ stable\\
			\hline
			$\mathrm{C}_p$	& $\mathrm{Sp}(p)$, $p\geq 3$			& $p(2p+1)$		& $\frac{2p+1}{p+1}$ & unstable & unstable\\
			\hline
			$\mathrm{D}_p$	& $\mathrm{Spin}(2p)$, $p\geq 3$			& $p(2p+1)$		& $\frac{2p-1}{p-1}$ & s.\ stable & s.\ stable\\
			\hline
			$\mathrm{E}_6$	& $\mathrm{E}_6$			& $156$		& $\frac{26}{9}$ & s.\ stable & s.\ stable\\
			\hline
			$\mathrm{E}_7$	& $\mathrm{E}_7$			& $266$		& $\frac{19}{6}$ & s.\ stable & s.\ stable\\
			\hline
			$\mathrm{E}_8$	& $\mathrm{E}_8$			& $496$		& $4$ & s.\ stable & s.\ stable\\
			\hline
			$\mathrm{F}_4$	& $\mathrm{F}_4$			& $52$		& $\frac{8}{3}$ & s.\ stable & s.\ stable\\
			\hline
			$\mathrm{G}_2$	& $\mathrm{G}_2$			& $14$		& $2$ & stable & unstable\\
			\hline	
			\caption{Tangential stability of simple Lie groups}
		\end{longtable}
	\end{center}
	In the case of irreducible rank-$1$ symmetric spaces of compact type, an analogous argumentation yields the following table:
	\begin{center}
		\renewcommand{\arraystretch}{1.5}
		\begin{longtable}{|l|l|c|c|l|l|}
			\hline
			type & $G/K$ & $\dimn(G/K)$ & $\Lambda$ & stability & tang. stab. \\
			\hline	
			\multirow{2}{*}{A I}	& $\mathrm{SU}(p)/\mathrm{SO}(p)$, $5\geq p\geq 3$			
			& $\frac{(p-1)(p+2)}{2}$			& $\frac{2(p-1)(p+2)}{p^2}$ &  stable &  unstable\\
			& $\mathrm{SU}(p)/\mathrm{SO}(p)$, $p\geq 6$			
			& $\frac{(p-1)(p+2)}{2}$			& $\frac{2(p-1)(p+2)}{p^2}$ &  stable &  s.\ stable\\
			\hline
			\multirow{2}{*}{A II}
			& $\mathrm{SU}(4)/\mathrm{Sp}(2)=S^5	$		& $5$		& $\frac{5}{4}$ & s.\ stable & stable \\
			& $\mathrm{SU}(2p)/\mathrm{Sp}(p)$, $p\geq3	$		& $2p^2-p-1$	& $\frac{(2p+1)(p-1)}{p^2}$ & unstable & unstable \\
			\hline
			\multirow{2}{*}{A III}
			& $\frac{\mathrm{U}(p+1)}{\mathrm{U}(p)\times \mathrm{U}(1)}=\CP^p	$		& $2p$		& $2$ & stable & unstable\\
			& $\frac{\mathrm{U}(p+q)}{\mathrm{U}(q)\times \mathrm{U}(p)}$, $q\geq p\geq2	$		
			& $2pq$	& $2$ &  stable & unstable \\
			\hline
			\pagebreak 
			\hline
			type & $G/K$ & $\dimn(G/K)$ & $\Lambda$ & stability & tang. stab. \\
			\hline
			\multirow{6}{*}{B I}
			& $\frac{\mathrm{SO}(5)}{\mathrm{SO}(3)\times SO(2)}$		& $6$	& $2$ &  unstable & unstable \\		
			& $\frac{\mathrm{SO}(2p+3)}{\mathrm{SO}(2p+1)\times \mathrm{SO}(2)}$, $p\geq 2$		& $4p+2$	& $2$ &  stable & unstable \\
			& $\frac{\mathrm{SO}(7)}{\mathrm{SO}(4)\times \mathrm{SO}(3)}$		& $12$	& $\frac{12}{5}$ & s.\ stable & s.\ stable \\
			& $\frac{\mathrm{SO}(2p+3)}{\mathrm{SO}(3)\times \mathrm{SO}(2p)}$, $p\geq 3$		
			& $6p$	& $\frac{4p+6}{2p+1}$ & s.\ stable & s.\ stable \\
			& $\frac{\mathrm{SO}(2q+2p+1)}{\mathrm{SO}(2q+1)\times \mathrm{SO}(2p)}$, $p,q\geq 2$		
			& $2n(2m+1)$	& $\frac{4m+4n+2}{2m+2n-1}$ & s.\ stable & s.\ stable \\
			\hline
			\multirow{1}{*}{B II} & $\frac{\mathrm{SO}(2p+1)}{\mathrm{SO}(2p)}=\mathbb{S}^{2p}$, $p\geq 1$		
			& $2p$		& $\frac{2p}{2p-1}$ & s.\ stable & stable \\	
			\hline
			C I	& $\mathrm{Sp}(p)/\mathrm{U}(p)$, $p\geq 3$		& $p(p+1)$		& $2$ & unstable & unstable\\		
			\hline
			\multirow{3}{*}{C II}
			& $\frac{\mathrm{Sp}(2)}{\mathrm{Sp}(1)\times \mathrm{Sp}(1)}=\mathbb{S}^4	$		
			& $4$		& $\frac{4}{3}$ & s.\ stable & stable \\
			& $\frac{\mathrm{Sp}(p+1)}{\mathrm{Sp}(p)\times \mathrm{Sp}(1)}=\HP^p$, $p\geq 2$		
			& $4p$	& $\frac{2(p+1)}{p+2}$ & unstable & unstable \\
			& $\frac{\mathrm{Sp}(p+q)}{\mathrm{Sp}(q)\times \mathrm{Sp}(p)}$, $q\geq p\geq 2$		
			& $4pq$	& $\frac{2(p+q)}{p+q+1}$ & unstable & unstable \\
			\hline
			\multirow{6}{*}{D I}
			
			& $\frac{\mathrm{SO}(8)}{\mathrm{SO}(5)\times\mathrm{SO}(3)}$		& $15$	& $\frac{5}{2}$ &  s.\ stable & s.\ stable \\		
			& $\frac{\mathrm{SO}(2p+2)}{\mathrm{SO}(2p)\times \mathrm{SO}(2)}$, $p\geq 3$		& $4p$	& $2$ & stable & unstable \\
			& $\frac{\mathrm{SO}(2p)}{\mathrm{SO}(p)\times \mathrm{SO}(p)}$, $p\geq 4$		& $p^2$	& $\frac{2n}{n-1}$ & s.\ stable & s.\ stable \\
			& $\frac{\mathrm{SO}(2p+2)}{\mathrm{SO}(p+2)\times \mathrm{SO}(p)}$, $p\geq 4$		& $p(p+2)$	& $\frac{2p+2}{p}$ & s.\ stable & s.\ stable \\
			& $\frac{\mathrm{SO}(2p)}{\mathrm{SO}(2p-q)\times \mathrm{SO}(q)}$,	& $(2p-q)q$	& $\frac{2p}{p-1}$ & s.\ stable & s.\ stable \\
			& $p-2\geq q\geq 3$	& & & & \\
			\hline
			D II& $\frac{SO(2p+2)}{SO(2p+1)}=S^{2p+1}$, $p\geq 3$		& $2p+1$		& $\frac{2p+1}{2p}$ & s.\ stable & stable \\
			\hline
			D III	& $\mathrm{SO}(2p)/\mathrm{U}(p)$, $p\geq 5$		& $p(p-1)$		& $2$ &  stable & unstable\\
			\hline
			E I	& $\mathrm{E}_6/[\mathrm{Sp}(4)/\left\{\pm I\right\}]$ & $42$		& $\frac{28}{9}$ & s.\ stable & s.\ stable\\
			\hline
			E II	& $\mathrm{E}_6/\mathrm{SU}(2)\cdot \mathrm{SU}(6)$			& $40$		& $3$ & s.\ stable & s.\ stable\\
			\hline
			E III	& $\mathrm{E}_6/\mathrm{SO}(10)\cdot \mathrm{SO}(2)$	& $32$		& $2$ &  stable & unstable\\
			\hline
			E IV	& $\mathrm{E}_6/\mathrm{F}_4$			& $26$		& $\frac{13}{9}$ & unstable & unstable\\
			\hline
			E V	& $\mathrm{E}_7/[\mathrm{SU}(8)/\left\{\pm I\right\}]$ & $70$		& $\frac{10}{3}$ & s.\ stable & s.\ stable \\
			\hline
			E VI	& $\mathrm{E}_7/\mathrm{SO}(12)\cdot\mathrm{SU}(2)$ & $64$		& $\frac{28}{9}$ & s.\ stable & s.\ stable \\
			\hline
		\pagebreak
		\hline
			type & $G/K$ & $\dimn(G/K)$ & $\Lambda$ & stability & tang. stab. \\
				\hline
			E VII	& $\mathrm{E}_7/\mathrm{E}_6\cdot \mathrm{SO}(2)$ & $54$		& $2$ & stable & unstable \\
			\hline
			E VIII	& $\mathrm{E}_8/\mathrm{SO}(16)$ & $128$		& $\frac{62}{15}$ & s.\ stable & s.\ stable \\
			\hline
			E IX	& $\mathrm{E}_8/\mathrm{E}_7\cdot \mathrm{SU}(2)$ & $112$		& $\frac{16}{5}$ & s.\ stable & s.\ stable \\
			\hline
			F I	& $\mathrm{F}_4/Sp(3)\cdot\mathrm{SU}(2)$ & $28$		& $\frac{26}{9}$ & s.\ stable & s.\ stable \\
			\hline
			F II	& $\mathrm{F}_4/\mathrm{Spin}(9)$ & $16$		& $\frac{4}{3}$ & unstable & unstable \\
			\hline
			G	& $\mathrm{G}_2/\mathrm{SO}(4)$ & $8$		& $\frac{7}{3}$ & s.\ stable & unstable \\
			\hline
			\caption{Tangential stability of symmetric spaces of non-group type}
		\end{longtable}
	\end{center}
\end{proof}

\section{H\"older spaces on conical manifolds}\label{spaces-section} \medskip

This section is basically a recap of the corresponding definitions in \cite{Ver-Ricci}
in the case of isolated conical singularities. We introduce H\"older spaces adapted 
to the singular geometry and mapping properties of the corresponding heat operator.
We consider a manifold $(M,g)$ with isolated conical singularities and assume for notational
simplicity that we have just one conical end. All constructions extend easily to the case of 
multiple conical ends.

\begin{defn}\label{hoelder-A}
The H\"older space $\ho(M\times [0,T]), \A\in [0,1),$ consists of functions 
$u(p,t)$ that are continuous on $\overline{M} \times [0,T]$ with finite $\A$-th H\"older 
norm
\begin{align}\label{norm-def}
\|u\|_{\A}:=\|u\|_{\infty} + \sup \left(\frac{|u(p,t)-u(p',t')|}{d_M(p,p')^{\A}+
|t-t'|^{\frac{\A}{2}}}\right) <\infty, 
\end{align}
where the distance function $d_M(p,p')$ between any two points $p,p'\in M$ 
is defined with respect to the conical metric $g$, and in terms of the local coordinates 
$(x,z)$ in the singular neighborhood $\cC(F)$ given equivalently by
\begin{align*}
d_M((x,z), (x',z'))=\left(|x-x'|^2+(x+x')^2|z-z'|^2\right)^{\frac{1}{2}}.
\end{align*}
The supremum is taken over all $(p,p',t) \in M^2 \times [0,T]$\footnote{Finiteness 
of the H\"older norm $\|u\|_{\A}$ in particular implies that $u$ is continuous on the 
closure $\overline{M}$ up to the edge singularity, and the supremum may be taken 
over $(p,p',t) \in \overline{M}^2 \times [0,T]$. Moreover, as explained in 
\cite{Ver-Ricci} we can assume without loss of generality that 
the tuples $(p,p')$ are always taken from within the same coordinate 
patch of a given atlas.}. 
\end{defn} 

We now extend the notion of H\"older spaces to sections of the  
vector bundle $S=\textup{Sym}^2({}^{ib}T^*M)$ of symmetric $2$-tensors. 

\begin{defn}\label{S-0-hoelder}
Denote the fibrewise inner product on $S$ induced by the Riemannian metric $g$, again by $g$.
The H\"older space $\ho (M\times [0,T], S)$ consists of all sections $\w$ of
$S$ which are continuous on $\overline{M} \times [0,T]$, 
such that for any local orthonormal frame 
$\{s_j\}$ of $S$, the scalar functions $g(\w,s_j)$ are $\ho (M\times [0,T])$.
\medskip

The $\A$-th H\"older norm of $\w$ is defined using a partition of unity
$\{\phi_j\}_{j\in J}$ subordinate to a cover of local trivializations of $S$, with a 
local orthonormal frame $\{s_{jk}\}$ over $\supp (\phi_j)$ for each $j\in J$. We put
\begin{align}\label{partition-hoelder-2}
\|\w\|^{(\phi, s)}_{\A}:=\sum_{j\in J} \sum_{k} \| g(\phi_j \w,s_{jk}) \|_{\A}.
\end{align}
\end{defn}

Norms corresponding to different choices of $(\{\phi_j\}, \{s_{jk}\})$
are equivalent and we may drop the upper index $(\phi, s)$ from notation.
We now turn to weighted and higher order H\"older spaces. We extend the boundary
defining function $x:\cC(F) \to (0,1)$ smoothly to a non-vanishing function on $M$.
The weighted H\"older spaces of higher order are now defined as follows. 

\begin{defn}\label{funny-spaces}
\begin{enumerate}
\item The weighted H\"older space for $\gamma \in \R$ is
\begin{align*}
&x^\gamma \ho(M\times [0,T], S) := \{ \, x^\gamma \w \mid \w \in \ho(M\times [0,T], S) \, \}, 
\\ &\textup{with H\"older norm} \ \| x^\gamma \w \|_{\A, \gamma} := \|\w\|_{\A}.
\end{align*}
\item The hybrid weighted H\"older space for $\gamma \in \R$ is
\begin{align*}
&\ho_{, \gamma} (M\times [0,T], S) := x^\gamma \ho(M\times [0,T], S)  \, \cap \, 
x^{\gamma + \A} \mathcal{C}^0_{\textup{ie}}(M\times [0,T], S) \\
&\textup{with H\"older norm} \  \| \w \|'_{\A, \gamma} := \|x^{-\gamma} 
\w\|_{\A} + \|x^{-\gamma-\A} \w\|_\infty.
\end{align*}
\item The weighted higher order H\"older spaces, which specify regularity of solutions 
under application of the Levi Civita covariant derivative $\nabla$ of $g$ on symmetric $2$-tensors
and time differentiation are defined for any 
$\gamma \in \R$ and $k \in \N$ by\footnote{Differentiation is a priori 
understood in the distributional sense.}\footnote{We require regularity of $\w$ under differentiation by $x^2\partial_t$
instead of just $\partial_t$, since in the discussion below, $\partial_t \w \mid_{t=0}$ need not be continuous up to $x=0$.}
\begin{equation*}
\begin{split}
&\hok (M\times [0,T], S)_\gamma = \{\w\in \ho_{,\gamma} \mid  \{\nabla_{\V_b}^j \circ \, (x^2 \partial_t)^\ell\} 
\, \w \in \ho_{,\gamma} \ \textup{for any} \ j+2\ell \leq k \}, \\
&\hok (M\times [0,T], S)^b_\gamma = 
\{u\in \ho \mid  \{\nabla_{\V_b}^j \circ \, (x^2 \partial_t)^\ell\} \, u \in 
x^\gamma\ho \ \textup{for any} \ j+2\ell \leq k\},
\end{split}
\end{equation*}
where $j,l\in\N_0$, the upper index b in the second space indicates the fact that despite
the weight $\gamma$, the solutions $u \in \hok (M\times [0,T], S)^b_\gamma$
are only bounded, i.e. $u\in \ho$. The corresponding H\"older norms are defined 
using local bases $\{X_i\}$ of $\V$ and $\mathscr{D}_k:=\{\nabla_{X_{i_1}} \circ \cdots \circ 
\nabla_{X_{i_j}} \circ (x^2 \partial_t)^\ell \mid j+2\ell \leq k\}$ by
\begin{equation*}
\begin{split}
&\|\w\|_{k+\A, \gamma} = \sum_{j\in J} \sum_{X\in \mathscr{D}_k} \| X (\phi_j \w) \|'_{\A, \gamma}
+ \|\w\|'_{\A, \gamma}, \quad \textup{on} \ \hok (M\times [0,T], S)_\gamma, \\
&\|u\|_{k+\A, \gamma} = \sum_{j\in J} \sum_{X\in \mathscr{D}_k} \| X (\phi_j u) \|_{\A, \gamma}
+ \|u\|_{\A}, \quad \textup{on} \ \hok (M\times [0,T], S)^b_\gamma.
\end{split}
\end{equation*} 
\item In case of $\gamma=0$ we just omit the lower weight index and write
e.g. $\hok (M\times [0,T], S)$ and $\hok (M\times [0,T], S)^b$.
\end{enumerate} \ \medskip

The subspaces of time-independent functions are denoted by 
\begin{equation} \begin{split}
\hok (M, S)_\gamma \subset \hok (M\times [0,T], S)_\gamma,  \\
\hok (M, S)^b_\gamma \subset \hok (M\times [0,T], S)^b_\gamma.
\end{split} \end{equation}
\end{defn} 

The H\"older norms for different choices of local bases $\{X_1, \ldots, X_m\}$ of $\V_b$ and different choices
of Riemannian metrics $g$ with isolated conical singularities, are equivalent due to compactness of $M$ and $F$.
Note also that $|X_i|_g=O(x)$ so that $\omega\in C^{\alpha}_{ie,\gamma}$ implies  $|\partial_t^l\nabla^j\omega|_g=O(x^{\gamma+\alpha-j-2l})$ for $j,l\in\N_0$. Such spaces are very natural in the conical setting as elliptic regularity and Fredholm theory of elliptic operaters defined on these spaces is avaiable.

 \medskip

The vector bundle $S$ decomposes into a direct sum of sub-bundles
\begin{align}
S= S_0 \oplus S_1, 
\end{align}
where the sub-bundle $S_0=\textup{Sym}_0^2({}^{ib}T^*M)$
is the space of trace-free (with respect to the fixed metric $g$) symmetric $2$-tensors,
and $S_1$ is the space of pure trace (with respect to the fixed metric $g$) symmetric 
$2$-tensors. The sub bundle $S_1$ is trivial real vector bundle over $M$ of rank 1.
\medskip

Definition \ref{funny-spaces} extends ad verbatim to sections of $S_0$ and $S_1$.
Since the sub-bundle $S_1$ is a trivial rank one real vector bundle, its sections
correspond to scalar functions. In this case we may omit $S_1$ from the notation and
simply write e.g. $\hok (M\times [0,T])^b_\gamma$. \medskip

\begin{remark}
The higher order weighted H\"older spaces in Definition \ref{funny-spaces}
differ slightly from the corresponding spaces in \cite{Ver-Ricci} by the choice of 
admissible derivatives. While in \cite{Ver-Ricci} we allow differentiation by any 
b-vector field $\V \in \V_b$, here we employ only derivatives of the form $\nabla_{\V},
\V \in \V_b$. \medskip
\end{remark}

Below we will simplify notation by introducing the following spaces.

\begin{defn}\label{H-space}
Let $(M,g)$ be a compact conical manifold and 
assume that the conical cross section $(F,g_F)$ is
strictly tangentially stable. Then we define
\begin{equation*}
\cH^{k, \A}_{\gamma} (M\times [0,T], S) := \hok (M\times [0,T], S_0)_{\gamma}
\ \oplus \ \hok (M\times [0,T], S_1)^b_{\gamma}.
\end{equation*}
If $(F, g_F)$ is tangentially stable but not 
strictly tangentially stable, we set instead
\begin{equation*}
\cH^{k, \A}_{\gamma} (M\times [0,T], S) := \hok (M\times [0,T], S)^b_{\gamma}.
\end{equation*}
The subspaces of time-independent functions are denoted by 
\begin{equation}
\cH^{k, \A}_{\gamma} (M, S) \subset \cH^{k, \A}_{\gamma} (M\times [0,T], S).
\end{equation}
\end{defn}

\section{Mapping properties of the heat operator}\label{mapping-section}

We proceed in the previously set notation on a compact manifold $(M,g)$ with 
isolated conical singularities. Consider the heat equation for the Friedrichs self-adjoint extension $\Delta_L$
of the Lichnerowicz Laplacian  
\begin{equation}\label{heat-equation-formula}
\left(\partial_t + \Delta_L\right) u = 0, \quad u(0) =u_0 \in \dom(\Delta_L).
\end{equation}
Under the assumption of strict tangential stability, the second named 
author constructed in \cite{Ver-Ricci} a fundamental solution $H_L$ to the heat equation
and established the mapping properties for $k \in \N_0$ and $\gamma > 0$ sufficiently small,
cf. \cite[Theorem 3.1 and Theorem 3.3]{Ver-Ricci}
\begin{equation}\label{mapping1}
H_L: \cH^{k, \A}_{-2+\gamma}(M\times [0,T], S)  \to \cH^{k+2, \A}_{\gamma}(M\times [0,T], S).
\end{equation}

\begin{remark}
The H\"older spaces employed in \cite{Ver-Ricci} in fact allow differentiation in space by 
any b-vector field, whereas here we have restricted the admissible differentiation in space
to be given by the covariant derivative. In case of strict tangential stability, this restriction 
is unnecessary, and was introduced here only to treat strict and non-strict tangential stability
cases along each other.
\end{remark}

If strict tangential stability fails and only tangential stability holds, \eqref{mapping1} does 
not hold anymore. However we may prove the following statement. 

\begin{thm}
Let $(M,g)$ be a compact conical manifold and assume that the conical cross section $(F,g_F)$ is
tangentially stable, but not strictly tangentially stable. In this case we assume additionally that
the isolated conical singularity is an orbifold singularity, i.e. $(\cC(F) = (0,1) \times F, dx^2 + x^2 g_F)$
is a flat (not just Ricci-flat) cone. Then for $\gamma, \alpha > 0$ sufficiently small, 
the fundamental solution $H_L$ admits the following mapping property
\begin{equation}\label{mapping2}
H_L: x^{-2+\gamma}\hok(M\times [0,T], S)^b  \to \cH^{k+2, \A}_{\gamma}(M\times [0,T], S).
\end{equation}
\end{thm}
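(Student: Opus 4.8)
The plan is to reduce the non-strictly-tangentially-stable case to the strictly stable one by analyzing the precise obstruction to the mapping property \eqref{mapping1}. Recall from \eqref{cone-asymptotics} that solutions in the maximal domain pick up indicial roots $\nu(\lambda) - (n-1)/2$ and $-\nu(\lambda) - (n-1)/2$; strict tangential stability means all $\nu(\lambda) > 0$, which guarantees a spectral gap above the Friedrichs domain and allows the microlocal heat-kernel construction of \cite{Ver-Ricci} to place the solution in the full weighted space $\cH^{k+2,\A}_\gamma$ including covariant-derivative control. When strict tangential stability fails, there is a finite set of tangential eigentensors $\omega_\lambda$ with $\nu(\lambda) = 0$, i.e. $\lambda = -((n-1)/2)^2 \cdots$ — more precisely, the borderline eigenvalues saturating the bounds in Theorem~3.2, producing indicial roots with a logarithmic term $x^{-(n-1)/2}\log x$. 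The output can then no longer be controlled in $\cH^{k+2,\A}_\gamma$ with covariant derivatives, but only as a bounded section; hence the right-hand side of \eqref{mapping2} replaces $\cH^{k,\A}_{-2+\gamma}$ by the plain weighted bounded space $x^{-2+\gamma}\hok(\cdot)^b$ and the target remains $\cH^{k+2,\A}_\gamma$ as \emph{defined in the non-strict case} (Definition~\ref{H-space}), i.e. using the $b$-bounded space for all of $S$ rather than the split version.

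The key steps I would carry out are as follows. First, localize: away from the conical singularity the heat operator of $\Delta_L$ is a standard parabolic operator on a compact manifold and the mapping property is classical Schauder theory, so everything reduces to a neighborhood $\cC(F) = (0,1)\times F$. Second, use the orbifold assumption: since $(\cC(F), dx^2+x^2 g_F)$ is a flat cone, $(F,g_F)$ is a spherical space form, and by the Example following Theorem~3.2 the only failure of strict tangential stability on such $F$ is that $2(n+1) \in \mathrm{Spec}(\Delta)$, coming from the $V_{4,i}$-block at $\lambda = 2(n+1)$ where $\det A$ vanishes at $\epsilon = 0$. This pins down exactly which finitely many tangential modes $\omega_\lambda$ are borderline, and on the flat cone the corresponding model heat kernel can be written explicitly (or via separation of variables and Bessel functions, as in \cite{Ver:ZDR, KLP:FDG}), making the $\log$-term contributions completely transparent. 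Third, analyze the model heat operator acting on $x^{-2+\gamma}\hok(\cdot)^b$: the borderline modes contribute a term whose covariant derivatives fail to gain the full weight $x^\gamma$ in the $S_0$-directions, but which is still bounded; feeding this through the parametrix construction of \cite{Ver-Ricci} (replacing the $S_0$-block estimate by its weaker, $b$-bounded analogue) yields the claimed mapping into the non-strict $\cH^{k+2,\A}_\gamma$. Fourth, patch the interior and near-singularity estimates together with the partition of unity from Definition~\ref{S-0-hoelder} and commute $\nabla_{\V_b}$, $x^2\partial_t$ past $H_L$ using that these derivatives preserve the relevant function spaces up to lower-order terms already controlled.

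I expect the main obstacle to be Step three: controlling the higher covariant derivatives $\nabla_{\V_b}^j(x^2\partial_t)^\ell H_L u$ uniformly for the borderline modes. In the strictly stable case \cite{Ver-Ricci} the gain of $x^\gamma$ together with a positive indicial gap lets one absorb commutator terms $[\nabla_{\V_b}, \Delta_L]$ and the $\mathscr{O} \in x^{-1}\V_b^2$ remainder; at $\nu(\lambda) = 0$ the logarithmic asymptotics mean the naive estimate loses exactly a factor of $\log x$ (equivalently, a power of $x^{-\epsilon}$ for every $\epsilon > 0$), which is fatal if one insists on landing in the $S_0$-weighted space but harmless in the $b$-bounded space since boundedness is weight-insensitive. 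The delicate point is to show the loss is \emph{only} logarithmic and confined to the finitely many borderline tangential modes — here the flatness of the cone is essential, because it makes $\square_L$ on $\cC(F)$ have an explicit, fully diagonalizable spectral decomposition so that the borderline block genuinely decouples and its contribution can be estimated by hand rather than through the abstract functional calculus. Once that decoupling is established, the remaining (non-borderline) modes satisfy the strict-stability estimate verbatim and the proof closes by summation over the partition of unity and interpolation to control the intermediate-order norms.
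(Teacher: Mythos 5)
Your proposal contains a genuine error at its core: the identification of the borderline modes. Failure of strict tangential stability means $0\in\mathrm{Spec}(\square_L)$ on tracefree tensors, i.e.\ $\lambda=0$; since tangential stability already forces $\lambda\ge 0$, one always has $\nu(\lambda)\ge\frac{n-1}{2}>0$, so the modes with $\nu(\lambda)=0$ and the logarithmic asymptotics $x^{-(n-1)/2}\log x$ that you build your Step three around are excluded a priori and are not the obstruction. The true obstruction is the indicial root $\nu(0)-\frac{n-1}{2}=0$: the zero modes of $\square_L$ contribute terms which are bounded but gain no weight $x^{\gamma}$. Moreover, even after correcting the modes, your conclusion in Step three would not suffice: by Definition \ref{funny-spaces}(3), membership in $\hok(\cdot)^b_{\gamma}$ (hence in $\cH^{k+2,\A}_{\gamma}$ of Definition \ref{H-space} in the non-strict case) still requires every derivative $\nabla^j_{\V_b}\circ(x^2\partial_t)^\ell$ with $1\le j+2\ell\le k+2$ to land in $x^{\gamma}\ho$; only the undifferentiated section is allowed to be merely bounded. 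So establishing that the borderline contribution is ``bounded, with covariant derivatives that fail to gain the weight'' proves membership in neither the strict nor the non-strict target space, and the claim that ``boundedness is weight-insensitive'' does not close the argument.

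What is missing is the single observation on which the paper's proof turns, and it is exactly where the orbifold (flatness) hypothesis is used: on a flat cone, $\ker\square_L$ consists of tensors that are parallel along $F$ and hence are annihilated by $\nabla_{\partial_z}$. This makes the zero-mode block behave precisely like the constant-function kernel of the tangential Laplace--Beltrami operator in the scalar (pure trace) case, whose heat-operator estimates in the b-spaces were already established in \cite[Theorem 3.3]{Ver-Ricci}; the mapping property \eqref{mapping2} then follows by running those scalar estimates for the zero modes and the strict-stability estimates for the complementary modes. Your use of flatness --- an explicit Bessel-type diagonalization of the borderline block --- does not by itself deliver the weighted control of the $\nabla_{\V_b}$-derivatives of the zero-mode part; without the parallelism observation the required gain of $x^{\gamma}$ on first and higher derivatives is not forthcoming, so the decoupling you describe does not complete the proof.
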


\begin{proof}
If $\cC(F)$ is flat, then $\ker \square_L$ consists of elements that are parallel along 
$F$ and hence vanish under application of $\nabla_{\partial_z}$. This corresponds
precisely to the scalar case, where $\Delta_L$ reduces to the Laplace Beltrami operator
and $\square_L$ is the Laplace Beltrami operator of $(F,g_F)$. In that case, 
$\ker \square_L$ consists of constant functions that vanish under the application of $\partial_z$.
Hence mapping properties in the case of the flat conical singularity can be obtained
along the lines of the estimates for the scalar Laplace Beltrami operator in 
\cite[Theorem 3.3]{Ver-Ricci}.
\end{proof}

In the next result we identify the fundamental solution $H_L$ with the heat operator
of the Friedrichs extension $\Delta_L$ and deduce discreteness of its spectrum.
We assume here that the Lichnerowicz Laplace operator $\Delta_L$ with domain $C^\infty_0(M,S)$
is bounded from below.

\begin{thm}\label{discrete-thm}
Let $(M,g)$ be a compact conical manifold. 
Assume that the Lichnerowicz Laplace operator $\Delta_L$ with domain $C^\infty_0(M,S)$
is bounded from below. Assume moreover that the conical cross section $(F,g_F)$ 
is tangentially stable, and if it is not strictly tangentially stable we assume in 
addition that $\cC(F)$ is an orbifold singularity. Then the following is true.
\begin{enumerate}
\item The Friedrichs self-adjoint extension $\Delta_L$ is bounded from below \\ and the 
fundamental solution $H_L$ equals the heat operator $e^{-t\Delta_L}$.
\item The Friedrichs self-adjoint extension $\Delta_L$ is discrete.
\end{enumerate}
\end{thm}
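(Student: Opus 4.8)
The plan is to establish the two assertions in sequence, using the mapping properties \eqref{mapping1} (or \eqref{mapping2} in the non-strict case) together with standard functional-analytic facts about the Friedrichs extension proved in Proposition \ref{Friedrichs}.

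\medskip

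\textbf{Step 1: $H_L = e^{-t\Delta_L}$.} By Proposition \ref{Friedrichs}, boundedness from below of $\Delta_L$ on $C^\infty_0(M,S)$ gives a Friedrichs self-adjoint extension, also bounded from below, whose domain is $\dom(\Delta_L)$ as in \eqref{Friedrichs-domain}; this extension generates a strongly continuous semigroup $e^{-t\Delta_L}$ on $L^2(M,S)$ by the spectral theorem. To identify it with the constructed fundamental solution $H_L$, I would first observe that, for initial data $u_0 \in C^\infty_0(M,S)$, the function $u(t) := H_L u_0$ solves the heat equation $(\partial_t + \Delta_L)u = 0$ with $u(0) = u_0$, and — crucially — the mapping property \eqref{mapping1}/\eqref{mapping2} forces $u(t)$ to lie in the weighted space $\cH^{2,\A}_{\gamma}$, hence to satisfy the Friedrichs boundary condition $c^-_\lambda(u(t)) = 0$ for all $\nu(\lambda) \in [0,1)$, i.e. $u(t) \in \dom(\Delta_L)$. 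Thus $v(t) := e^{-t\Delta_L}u_0 - H_L u_0$ solves the homogeneous heat equation for the self-adjoint operator $\Delta_L$ with $v(0) = 0$; the standard energy estimate $\tfrac{d}{dt}\|v(t)\|_{L^2}^2 = -2(\Delta_L v, v)_{L^2} \le -2C\|v\|_{L^2}^2$ together with Gronwall gives $v \equiv 0$. Density of $C^\infty_0(M,S)$ in $L^2$ and boundedness of both operators on $L^2$ then yield $H_L = e^{-t\Delta_L}$ as operators on $L^2(M,S)$.

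\medskip

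\textbf{Step 2: discreteness of $\Delta_L$.} Once $e^{-t\Delta_L} = H_L$, discreteness of the spectrum follows if $e^{-t\Delta_L}$ is a compact operator on $L^2(M,S)$ for some (hence every) $t > 0$: a self-adjoint operator bounded below whose semigroup is compact has purely discrete spectrum with eigenvalues accumulating only at $+\infty$. Compactness I would extract from the mapping property: for $t > 0$ fixed, $H_L$ maps $L^2(M,S) \subset \cH^{0,\A}_{-2+\gamma}$-type data (after absorbing a harmless time cutoff and using that $H_L$ smooths instantaneously, cf. \cite[Theorem 3.1]{Ver-Ricci}) into $\cH^{2,\A}_{\gamma}(M\times[0,T],S)$, and evaluation at time $t$ lands in the weighted Hölder space $x^\gamma \mathcal{C}^{2+\A}$-type space over $M$. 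Since $\gamma > 0$ and $M$ is compact, the inclusion of this weighted higher-order Hölder space into $L^2(M,S)$ is compact — this is an Arzelà–Ascoli/Rellich argument exploiting the extra decay $x^\gamma$ near the cone tip to control the behaviour of a bounded sequence uniformly, together with equicontinuity coming from the $\A$-Hölder and one-derivative bound on the compact part. Hence $e^{-t\Delta_L}$ is compact and $\Delta_L$ is discrete.

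\medskip

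\textbf{Main obstacle.} The delicate point is Step 2's compact embedding: one must be careful that the weighted Hölder space produced by $H_L$ at a positive time genuinely embeds compactly into $L^2$, i.e. that no mass escapes into the conical singularity. This is exactly where the positive weight $\gamma$ in the target space of \eqref{mapping1}/\eqref{mapping2} is used — it provides the uniform decay near $x = 0$ needed for tightness, and it is also where the orbifold/flatness hypothesis enters in the non-strictly-tangentially-stable case, since there only the weaker mapping property \eqref{mapping2} is available and one must check it still lands in a space with the relevant decay. A secondary subtlety in Step 1 is justifying the energy estimate for data merely in $\dom(\Delta_L)$ rather than in $C^\infty_0$, but this is handled by the self-adjointness of the Friedrichs extension and a density argument within its graph norm.
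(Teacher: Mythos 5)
Your Step 1 is essentially a viable variant of the paper's identification argument: the paper also reduces the identity $H_L=e^{-t\Delta_L}$ to showing that $H_L(t)$ maps $L^2(M,S)$ into $\dom(\Delta_L)$, and your route via the Friedrichs boundary condition \eqref{Friedrichs-domain} plus an $L^2$ energy/Gronwall uniqueness argument is fine in principle. Be aware, though, that \eqref{mapping1} and \eqref{mapping2} as stated are mapping properties for $H_L$ acting \emph{with convolution in time} on space-time sources; applying them to propagate initial data $u_0$ (no convolution) is exactly the point the paper has to argue separately (see the proof of Corollary \ref{exponential-heat-kernel-estimate}), so you cannot simply quote \eqref{mapping1}/\eqref{mapping2} for $u(t)=H_L(t)u_0$. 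The paper bypasses this by working directly with the polyhomogeneous expansion \eqref{heat-asymptotics} of the kernel at fixed $t>0$: since only exponents $x^{\nu(\lambda)-\frac{n-1}{2}}\geq 0$ occur and the expansion is stable under b-derivatives, $H_L(t)$ maps $L^2$ into $\dom(\Delta_{L,\max})$ with all $c_\lambda^-$ coefficients absent, hence into $\dom(\Delta_L)$.

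The genuine gap is in Step 2. Your compactness argument needs a smoothing statement of the form ``$H_L(t)$ maps $L^2(M,S)$ boundedly into a (weighted) H\"older space for fixed $t>0$'', and this is not contained in \eqref{mapping1}/\eqref{mapping2}, which are H\"older-to-H\"older; in this paper such an $L^2$-to-H\"older statement is only obtained (in Proposition \ref{ker-H}, which logically comes \emph{after} Theorem \ref{discrete-thm}) from the kernel asymptotics \eqref{heat-asymptotics}, i.e.\ from precisely the input you have not invoked. Also, in the non-strictly tangentially stable case the target space $\cH^{k,\A}_{\gamma}$ is of type $\hok(\cdot)^b_\gamma$, so the image has no $x^\gamma$ decay, only boundedness; your ``tightness from the weight $\gamma$'' reasoning is therefore off there (compactness still holds, but via uniform boundedness plus the vanishing volume of small neighborhoods of the tip, not decay). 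The paper's actual argument is both shorter and avoids any compact-embedding discussion: from \eqref{heat-asymptotics} the Schwartz kernel of $H_L(t)$ is square-integrable on $M\times M$, so $H_L(t)$ is Hilbert--Schmidt, and by the semigroup identity $H_L(t)=H_L(t/2)\circ H_L(t/2)$ it is trace class; discreteness of the Friedrichs extension follows at once. If you want to keep your compact-embedding route, you must first prove the $L^2\to$ H\"older smoothing bound at positive time (e.g.\ by Cauchy--Schwarz against the square-integrable kernel together with the H\"older estimates on the kernel in the first variable), which again forces you back to \eqref{heat-asymptotics}.
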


\begin{proof}
By Proposition \ref{Friedrichs}, the Friedrichs self-adjoint extension $\Delta_L$ is bounded from below
and the heat operator $e^{-t\Delta_L}$ exists by spectral calculus. In order to identify $e^{-t\Delta_L}$
with $H_L$ it suffices to prove that for any fixed $t>0$, $H_L(t)$ maps $L^2(M,S)$ to 
$\dom(\Delta_L)$. This is due to the fact that by definition the heat kernel $e^{-t\Delta_L}$ 
is the unique solution operator to the heat equation \eqref{heat-equation-formula}, which maps $L^2(M,S)$ to 
$\dom(\Delta_L)$. \medskip

Let $(\lambda, \w_\lambda)$ be the set of eigenvalues and 
corresponding eigentensors of the tangential operator $\square_L$.
By the assumption of tangential stability, $\lambda \geq 0$, and we 
consider exactly as in \S \ref{laplace-section}
\begin{equation}
\nu(\lambda) := \sqrt{\lambda + 
\left(\frac{n-1}{2}\right)^2}.
\end{equation}

Consider $M \times M$ with local coordinates $(x,z)$
on the first copy of $M$ near the singularity. 
By construction of $H_L$ in \cite{Ver-Ricci}, the Schwartz kernel\footnote{We denote the Schwartz kernel and the fundamental solution by the same letter.} of the fundamental solution 
$H_L$ defines for a fixed time $t>0$ a polyhomogeneous function on $M \times M$ with 
the asymptotics as $x\to 0$ (we order the eigenvalues $(\lambda)$ of $\square_L$
in the ascending order)
\begin{equation}\label{heat-asymptotics}
H_L(t,x,z,\cdot) \sim \sum_{\lambda} \sum_{j=0}^\infty x^{\nu(\lambda) - \frac{(n-1)}{2}+j} 
\omega_{\lambda}(z) a_{\lambda j} (t, \cdot).
\end{equation}
The coefficients $a_{\lambda j} (t, \cdot) \in C^\infty(M)$ admit for each fixed $t>0$ the same asymptotic expansion as above. 
More precisely, we have as $x\to 0$ 
\begin{equation}\label{heat-asymptotics-coefficients}
a_{\lambda j} (t, x, z) \sim \sum_{\lambda} \sum_{k=0}^\infty 
x^{\nu(\lambda) - \frac{(n-1)}{2}+k} \omega_\lambda(z) b_{\lambda j k} (t),
\end{equation}
where the coefficients $b_{\lambda j k}(t)$ are real numbers.
Consequently, the coefficients $a_{\lambda j}(t, \cdot)$ are in fact elements of $L^2(M,S)$ for any fixed $t>0$. 
Hence we find for any $u \in L^2(M,S)$ as $x\to 0$
\begin{equation}\label{heat-asymptotics-2}
(H_L(t) u)(x,z) \sim \sum_{\lambda} \sum_{j=0}^\infty x^{\nu(\lambda) - \frac{(n-1)}{2}+j} \omega_\lambda(z) \,
\Bigl(a_{\lambda j} (t, \cdot) , u\Bigr)_{L^2}.
\end{equation}
Since the asymptotic expansion \eqref{heat-asymptotics-2} is stable under application of b-vector fields, 
$H_L(t)$ maps $L^2(M,S)$ into $\dom(\Delta_{L,\max})$. We conclude in view of the explicit 
structure of the domain $\dom(\Delta_L)$ in \eqref{Friedrichs-domain} that indeed 
\begin{align}
\forall \, t > 0: \quad H_L(t) : L^2(M,S) \to \dom(\Delta_L).
\end{align}
This proves the first statement. For the second property, note that due to the  
asymptotic expansion above, the Schwartz kernel of $H_L(t)$ is square-integrable on $M \times M$
and hence $H_L(t)$ is Hilbert-Schmidt for any fixed $t>0$. Due to the semi-group property of the heat
operator, $H_L(t) = H_L(t/2) \circ H_L(t/2)$ and hence $H_L(t)$ is trace-class for any fixed $t>0$.
This proves discreteness and the second statement. 
\end{proof}

We conclude the section with a proposition about $\ker \Delta_L$.

\begin{prop}\label{ker-H}
Let $(M,g)$ be a compact conical manifold. 
Assume that the Lichnerowicz Laplace operator $\Delta_L$ with domain $C^\infty_0(M,S)$
is bounded from below. Assume moreover that the conical cross section $(F,g_F)$ 
is tangentially stable. Then for $\gamma, \alpha >0$ sufficiently small such that \eqref{mapping2} holds,
and any $k \in \N_0$ 
$$\ker \Delta_L \subset \cH^{k, \A}_{\gamma} (M, S).$$
\end{prop}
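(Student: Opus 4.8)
The plan is to use that a section in $\ker\Delta_L$ is a fixed point of the heat semigroup and then read off its regularity and decay at the cone tip directly from the asymptotic structure of the heat kernel $H_L$. Let $u\in\ker\Delta_L$, where $\Delta_L$ denotes the Friedrichs extension. Since $\Delta_L$ is self-adjoint and $\Delta_L u=0$, spectral calculus gives $e^{-t\Delta_L}u=u$ for all $t\ge 0$, and by Theorem \ref{discrete-thm}~(1) we have $e^{-t\Delta_L}=H_L(t)$, hence $u=H_L(t_0)u$ for any fixed $t_0>0$. Viewing $u$ as the time-independent section $(p,t)\mapsto u(p)$ on $M\times[0,T]$, this shows $u=H_L u$ with $H_L$ the heat operator of \cite{Ver-Ricci}; in particular every time-H\"older difference quotient of $u$ and every application of $x^2\partial_t$ to $u$ vanishes, so it suffices to control the purely spatial behaviour of $u$ and of its $\V_b$-derivatives.

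For the spatial analysis, fix $t_0>0$ and recall from the construction of $H_L$ in \cite{Ver-Ricci}, as used in the proof of Theorem \ref{discrete-thm}, that the Schwartz kernel $H_L(t_0,x,z,\cdot\,)$ is polyhomogeneous on $M\times M$ with the leading asymptotics \eqref{heat-asymptotics} as $x\to 0$, that this expansion is stable under the b-derivatives $\nabla_{\V_b}$ acting in the $(x,z)$-variables, and that its coefficients lie in $L^2(M,S)$. Applying $\nabla_{\V_b}^j$ to the identity $u=H_L(t_0)u$ and pairing $\nabla_{\V_b}^j H_L(t_0,x,z,\cdot\,)$ against $u\in L^2(M,S)$, we conclude that $u$ is polyhomogeneous near the cone tip and that each $\nabla_{\V_b}^j u$ admits an expansion whose exponents lie in $\{\nu(\lambda)-\tfrac{n-1}{2}\}+\N_0$, the $\N_0$-shift accounting for the subleading term $\mathscr{O}$. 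By tangential stability $\lambda\ge 0$, hence $\nu(\lambda)\ge\tfrac{n-1}{2}$ and all exponents occurring are $\ge 0$; together with interior elliptic regularity this yields that $u$ and all its $\V_b$-derivatives are bounded and H\"older continuous up to the singularity, i.e. $u\in\hok(M\times[0,T],S)^b$ for every $k\in\N_0$.

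It remains to sharpen boundedness to the weighted membership of Definition \ref{H-space}, for $\gamma>0$ sufficiently small as in \eqref{mapping2}. If $(F,g_F)$ is \emph{strictly} tangentially stable, then $\square_L$ preserves the fibrewise splitting $S=S_0\oplus S_1$ and is strictly positive on $S_0$, so $\ker\square_L\subset S_1$ and the exponent $0$ occurs only for pure-trace modes, while every $S_0$-mode carries an exponent $\ge\nu_0-\tfrac{n-1}{2}>0$. For $\gamma$ in the admissible small range (which is precisely the range in which $\gamma$ is dominated by this $S_0$-decay rate, as already needed for \eqref{mapping1}) the $S_0$-component of $u$ together with all its $\mathscr{D}_k$-derivatives lies in $x^{\gamma}\ho$, i.e. in $\hok(M\times[0,T],S_0)_\gamma$, whereas the $S_1$-component is a scalar function which, with all its $\mathscr{D}_k$-derivatives, is bounded, i.e. in $\hok(M\times[0,T],S_1)^b_\gamma$; hence $u\in\cH^{k,\A}_\gamma(M\times[0,T],S)$. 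If $(F,g_F)$ is tangentially but not strictly tangentially stable, then by hypothesis $\cC(F)$ is flat, so $\ker\square_L$ reduces to the scalar situation treated in \cite[Theorem 3.3]{Ver-Ricci}: the only exponent-$0$ contributions come from modes parallel along $F$, which are annihilated by $\nabla_{\V_b}$; thus $u$ itself is bounded while every positive-order $\mathscr{D}_k$-derivative of $u$ lies in $x^{\gamma}\ho$, so $u\in\hok(M\times[0,T],S)^b_\gamma=\cH^{k,\A}_\gamma(M\times[0,T],S)$.

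I expect the main obstacle to be the sharp asymptotic analysis in the second and third steps. The partial expansion \eqref{cone-asymptotics} available for a generic element of the maximal domain only controls $u$ up to an $O(x^{-(n-3)/2}\log x)$ error, which need not even be bounded, so one genuinely has to invoke the full polyhomogeneity of the heat kernel $H_L$ from \cite{Ver-Ricci} and the stability of its expansion under $\nabla_{\V_b}$ to obtain boundedness of $u$ together with all its b-derivatives. The remaining point is bookkeeping: in the strictly stable case one must match the $S_0$-decay rate $\nu_0-\tfrac{n-1}{2}$ against the admissible weight $\gamma$ and check that the pure-trace zero modes land in the $S_1$-summand where only boundedness is demanded, while in the orbifold case one uses flatness to reduce to the scalar heat operator already analysed in \cite{Ver-Ricci}.
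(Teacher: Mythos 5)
Your first step coincides with the paper's: a kernel element is a fixed point of the heat semigroup, $u=H_L(t_0)u$ by Theorem \ref{discrete-thm}(1), and the leading asymptotics \eqref{heat-asymptotics} give boundedness/H\"older continuity up to the tip. From there, however, you diverge: the paper does not attempt any further asymptotic analysis of $u$ itself. It simply iterates the fixed-point identity through the already-established mapping properties \eqref{mapping1} and \eqref{mapping2}, using the elementary inclusions $\cH^{k,\A}_\gamma\subset\cH^{k,\A}_{-2+\gamma}\subset x^{-2+\gamma}\hok(M\times[0,T],S)^b$, so each application of $H_L$ upgrades $\cH^{k,\A}_\gamma$ to $\cH^{k+2,\A}_\gamma$ and induction in $k$ finishes the proof. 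Your route instead re-derives the weighted membership by a direct polyhomogeneity and mode-by-mode exponent analysis of $u$ near the tip, split into the strictly stable and orbifold cases.

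The genuine gap is in that second half. First, you assert that $u$ and all $\nabla^j_{\V_b}u$ are polyhomogeneous with exponent set $\{\nu(\lambda)-\tfrac{n-1}{2}\}+\N_0$; the paper only records the leading asymptotics \eqref{heat-asymptotics} of $H_L(t_0,\cdot,\cdot)$, and turning that into a full expansion of $u$ requires both the complete polyhomogeneous structure of the heat kernel from \cite{Ver-Ricci} and control of the infinite sum over $\lambda$ after pairing with an arbitrary $L^2$ section -- none of which you verify, and which is exactly the content that the mapping properties \eqref{mapping1}, \eqref{mapping2} package for you. Second, even granting the expansion, membership in $\cH^{k,\A}_\gamma$ is not a bare decay statement: by Definition \ref{funny-spaces}(2)--(3) the $S_0$-part must lie, together with all $\mathscr{D}_k$-derivatives, in the hybrid space $\ho_{,\gamma}=x^\gamma\ho\cap x^{\gamma+\A}\mathcal{C}^0_{\textup{ie}}$, i.e.\ you need sup-bounds at the rate $x^{\gamma+\A}$ \emph{and} H\"older control of $x^{-\gamma}$ times each derivative. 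A pure exponent count gives decay at the rate $\nu_{\min}-\tfrac{n-1}{2}$ on $S_0$; whether this beats $\gamma+\A$ depends on the fixed H\"older exponent $\A\in[0,1)$, not only on $\gamma$ being small, and your parenthetical claim that the admissible range for \eqref{mapping1} is ``precisely'' this range is an unchecked assertion about the internals of \cite{Ver-Ricci}. The paper's bootstrap avoids both issues entirely, which is why it is the cleaner (and the intended) argument; your outline can presumably be repaired by replacing the hand analysis of the expansion with an appeal to those mapping properties, at which point it collapses back onto the paper's proof.
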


\begin{proof}
First of all, for fixed $t>0$ we employ the asymptotics of the heat
kernel $H_L(t)$ in \eqref{heat-asymptotics} to see that $H_L(t)$ maps 
$L^2(M,S)$ to $\ho(M, S)$ for some $\alpha > 0$. Since $H_L(t) \restriction \ker \Delta_L \equiv \textup{Id}$,
we conclude that $\ker \Delta_L \subset \ho(M, S)$. \medskip

Using again $H_L(t) \restriction \ker \Delta_L \equiv \textup{Id}$ and the mapping properties
\eqref{mapping1} and \eqref{mapping2}, we conclude that $\ker \Delta_L \subset 
\cH^{2, \A}_{\gamma} (M, S) \subset \cH^{2, \A}_{\gamma} (M \times [0,T], S)$. 
Note that for $\gamma>0$ sufficiently small and any $k\in \N_0$ the following inclusions hold by construction 
\begin{equation}
\cH^{k, \A}_{\gamma} \subset 
\cH^{k, \A}_{-2+\gamma} \subset x^{-2+\gamma}\hok(M, S)^b.
\end{equation}
Hence we may apply $H_L(t) \restriction \ker \Delta_L \equiv \textup{Id}$ and the mapping properties
\eqref{mapping1}, \eqref{mapping2} again, to conclude that $\ker \Delta_L \subset 
\cH^{4, \A}_{\gamma} (M, S)$. Iterating the argument, we prove the 
statement $\ker \Delta_L \subset \cH^{k, \A}_{\gamma} (M, S)$
for any $k \in \N_0$.
\end{proof}

\begin{remark}
Note that $g$ lies in the kernel of $\Delta_L$ and nevertheless is only bounded
with respect to itself, without additional weights. This seems to contradict to the
statement that $\ker \Delta_L$ is a subset of a weighted H\"older space
$\cH^{k, \A}_{\gamma} (M\times [0,T], S)$. However, there is no contradiction, once 
we realize that under the decomposition $S=S_0 \oplus S_1$ of symmetric $2$-tensors,
$g = \textbf{1} \cdot g \in C^\infty(M,S_1)$ and hence is trivially an element of   
$\cH^{k, \A}_{\gamma} (M\times [0,T], S)$ for any $k\in \N_0$ and $\gamma > 0$.
\end{remark}

\section{Large time estimates of the heat kernel}\label{large-section}

Let $(M,g)$ be a linearly stable compact conical manifold. Recall that 
by Definition \ref{linear-stability} linear stability is non-negativity of the 
Lichnerowicz Laplace operator $\Delta_L$ with domain $C^\infty_0(M,S)$.
Assume moreover that the conical cross section $(F,g_F)$ 
is tangentially stable, and if it is not strictly tangentially stable we assume in 
addition that $\cC(F)$ is an orbifold singularity. From now on we continue
under this setting unless stated otherwise. \medskip

In this section we are concerned with uniform norm bounds 
for the heat operator $H_L$ as time goes to infinity. Since by Theorem \ref{discrete-thm}
the Friedrichs extension $\Delta_L$ is discrete with non-negative spectrum,
we expect norms of the heat operator restricted to the orthogonal complement of
$\ker \Delta_L$ to decrease exponentially for large times. We prove the following theorem.

\begin{thm}\label{exponential-decay}
Denote the restriction of $H_L$ to the orthogonal complement $\ker \Delta_L^\perp$ by $H_L^\perp$. 
Denote the first non-zero eigenvalue of the Friedrichs extension $\Delta_L$
by $\lambda_1>0$. Fix local generators $\{X_i\}_i$ of $\V_b$ and consider any
$D\in \{\textup{Id}, X_{i_1} \circ \cdots \circ X_{i_\ell} | \ell \in \N\}$. Then for any $t_0>0$ there 
exists a uniform constant $C(t_0)>0$ such that for $t\geq t_0$ and the pointwise norms\footnote{The inner 
product on the symmetric $2$-tensors $S$ is defined with respect to $g$.} 
of the heat kernel $H_L(t)$ taking values in $S \boxtimes S$, and its derivatives
\begin{align}
\| D H^\perp_L(t)(\cdot, \cdot)\|\leq C(t_0) \cdot e^{-t \lambda_1},
\end{align}
where $D$ is applied to the first space variable of $H_L(t)$.
\end{thm}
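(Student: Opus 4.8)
The plan is to establish the decay estimate in three stages: first prove the $L^2$-operator-norm decay of $H_L^\perp(t)$, then bootstrap to a pointwise (Schwartz-kernel) bound via the Hilbert--Schmidt/trace-class structure already available from Theorem \ref{discrete-thm}, and finally incorporate the $\V_b$-derivatives $D$ using the polyhomogeneous structure of $H_L$ and the mapping properties \eqref{mapping1}, \eqref{mapping2}. For the first stage, since $\Delta_L$ is self-adjoint, discrete, and non-negative by linear stability and Theorem \ref{discrete-thm}, spectral calculus gives immediately $\|H_L^\perp(t)\|_{L^2\to L^2} = e^{-t\lambda_1}$ for all $t>0$, where $\lambda_1>0$ is the first non-zero eigenvalue.

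For the pointwise bound, I would use the semigroup property to split $H_L^\perp(t) = H_L^\perp(t_0/2)\circ H_L^\perp(t-t_0) \circ H_L^\perp(t_0/2)$ for $t\geq t_0$. The outer factors $H_L(t_0/2)$ are fixed Hilbert--Schmidt operators (by the square-integrability of their kernels established in the proof of Theorem \ref{discrete-thm}), in fact they map $L^2(M,S)$ into $\cH^{k,\A}_\gamma$ via the mapping properties \eqref{mapping1}--\eqref{mapping2}, so they control the pointwise size of the kernel and all its b-derivatives in terms of $L^2$-operator norms; the middle factor contributes exactly the exponential factor $e^{-(t-t_0)\lambda_1}$ from the first stage. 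Concretely, writing the kernel of $H_L^\perp(t)$ at $(p,p')$ as the $L^2$-pairing $\langle H_L^\perp(t_0/2)(p,\cdot),\, H_L^\perp(t-t_0)\,H_L^\perp(t_0/2)(\cdot, p')\rangle$, one estimates by Cauchy--Schwarz: the norm is bounded by $\|H_L^\perp(t-t_0)\|_{L^2\to L^2}\cdot\|H_L(t_0/2)(p,\cdot)\|_{L^2}\cdot\|H_L(t_0/2)(\cdot,p')\|_{L^2}$, and the last two factors are bounded uniformly in $p,p'$ over the compact space $\overline{M}$ precisely because $H_L(t_0/2)(p,\cdot)$, viewed through \eqref{heat-asymptotics}, has uniformly bounded $\ho(M,S)$-norm, hence uniformly bounded $L^2(M,S)$-norm. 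This yields the claim for $D=\textup{Id}$ with $C(t_0)$ absorbing $e^{t_0\lambda_1}$ and the supremum of the fixed-time kernel norms.

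For a general differential monomial $D = X_{i_1}\circ\cdots\circ X_{i_\ell}$ in the b-vector fields applied to the first variable, I would apply $D$ to the first factor $H_L(t_0/2)(p,\cdot)$ in the splitting above; since $H_L$ is polyhomogeneous with the asymptotics \eqref{heat-asymptotics} that is stable under application of b-vector fields (as used in the proof of Theorem \ref{discrete-thm}), $D_p H_L(t_0/2)(p,\cdot)$ still lies in $\ho(M,S)$ with norm uniformly bounded in $p\in\overline{M}$, so the same Cauchy--Schwarz argument goes through with $C(t_0)$ now depending on $\ell$ and $D$ as well. The main obstacle is the uniform-in-$p$ control of $\|D_p H_L(t_0/2)(p,\cdot)\|_{L^2(M,S)}$ near the conical tip: one must check that differentiating the expansion \eqref{heat-asymptotics} in the first variable by b-vector fields does not destroy the square-integrability of the coefficients $a_\lambda$ in the second variable nor the summability over $\lambda$ of the off-diagonal tail — this is exactly where the construction of $H_L$ in \cite{Ver-Ricci} as a polyhomogeneous conormal kernel on the heat-space blow-up, together with the weighted mapping properties \eqref{mapping1} and \eqref{mapping2}, is indispensable, and I would invoke those results rather than re-deriving the kernel estimates by hand.
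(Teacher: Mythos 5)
Your argument is correct and reaches the same estimate, but it is organized differently from the paper's proof. The paper works directly with the spectral expansion $H^\perp_L(t)(p,q)=\sum_{\mu\geq\lambda_1}e^{-t\mu}\w_\mu(p)\otimes\w_\mu(q)$: it bounds the on-diagonal trace $\textup{tr}_p (D_1\circ D_2)H^\perp_L(t_0)(p,p)=\sum_{\mu\geq\lambda_1}e^{-t_0\mu}\|D\w_\mu(p)\|^2$ uniformly in $p$ via the product asymptotics \eqref{heat-asymptotics} together with Proposition \ref{ker-H}, factors out $e^{-t\lambda_1}$ using the monotonicity in $t$ of the remaining tail $K(t,p)$, and then passes from on-diagonal to off-diagonal bounds by the elementary inequality $ab\leq\frac12(a^2+b^2)$ applied to $\|D\w_\mu(p)\|\,\|\w_\mu(q)\|$. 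You instead channel all the large-time decay through the $L^2$-operator norm of the middle factor in the semigroup splitting $H^\perp_L(t)=H^\perp_L(t_0/2)\circ H^\perp_L(t-t_0)\circ H^\perp_L(t_0/2)$ and recover pointwise control by Cauchy--Schwarz against the fixed-time kernels $D_pH_L(t_0/2)(p,\cdot)$ and $H_L(t_0/2)(\cdot,q)$; the uniform-in-$p$ bound on $\|D_pH_L(t_0/2)(p,\cdot)\|_{L^2}$ that you need is, via the semigroup property, exactly the paper's uniform on-diagonal trace bound, and both rest on the same input: the polyhomogeneous structure \eqref{heat-asymptotics} of $H_L$ from \cite{Ver-Ricci}, its stability under $\V_b$-derivatives, and Theorem \ref{discrete-thm}. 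Two small points to tighten: Cauchy--Schwarz naturally produces $\|H^\perp_L(t_0/2)(p,\cdot)\|_{L^2}$ rather than $\|H_L(t_0/2)(p,\cdot)\|_{L^2}$, so you should either remark that orthogonal projection onto $(\ker\Delta_L)^\perp$ in the second variable does not increase the $L^2$-norm, or control the subtracted kernel part $\sum_{\mu=0}\w_\mu(p)\otimes\w_\mu(q)$ pointwise via Proposition \ref{ker-H} (this is precisely the role that proposition plays in the paper's proof); and the appeal to the mapping properties \eqref{mapping1}--\eqref{mapping2} is not really needed here, since all you use is the uniform boundedness of the fixed-time kernel and its b-derivatives, which comes from \eqref{heat-asymptotics} directly. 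What your route buys is that the exponential rate appears cleanly from spectral calculus on $L^2$ without the monotonicity argument for $K(t,p)$; what the paper's route buys is that it never needs to manipulate operator compositions, only the one spectral sum.
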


\begin{proof}
Denote the set of eigenvalues and eigentensors of the Friedrichs extension $\Delta_L$
by $\{\mu, \w_\mu\}$. By discreteness of the spectrum, the heat kernel can be written in terms of 
eigenvalues and eigentensors for any $(p,q) \in M \times M$ by 
\begin{equation}
\begin{split}
&H_L(t)(p,q) = \sum_{\mu\geq 0} e^{-t\mu} \w_\mu(p) \otimes \w_\mu(q), \\
&H^\perp_L(t)(p,q) = \sum_{\mu\geq \lambda_1} e^{-t\mu} \w_\mu(p) \otimes \w_\mu(q).
\end{split}
\end{equation}
Consider any $D\in \{\textup{Id}, X_{i_1} \circ \cdots \circ X_{i_\ell} | \ell \in \N\}$. The notation $(D_1 \circ D_2) H_L$
indicates that the operator $D$ applied once in the first spatial variable of $H_L$ and 
once in the second spatial variable. By the product asymptotics of $H_L$ in \eqref{heat-asymptotics}
and in \eqref{heat-asymptotics-coefficients}, for a fixed $t_0>0$ the pointwise trace 
$\textup{tr}_p H_L(t_0)(p,p)$ admits an asymptotic 
expansion for $p$ approaching the conical singularity, i.e. for $p=(x,z)$ as $x\to 0$
\begin{equation}
\textup{tr}_p H_L(t_0,(x,z)) \sim \sum_{\lambda, \lambda'} \sum_{j,k=0}^\infty x^{\nu(\lambda) + \nu(\lambda')
- (n-1)+i+j} \omega_{\lambda}(z) \omega_{\lambda'}(z) b_{\lambda j k} (t_0).
\end{equation}
This expansion is stable under application of $b$-vector fields and hence for $\lambda_0\geq 0$ being the
smallest eigenvalue of the tangential operator $\square_L$, we conclude 
\begin{equation}
\begin{split}
\textup{tr}_p (D_1 \circ D_2) H_L(t_0,(x,z)) &= O (x^{2\nu(\lambda_0) - (n-1)}) \\ 
&= O(1), \quad \textup{as} \quad x\to 0.
\end{split}
\end{equation}
Hence, $\textup{tr}_p (D_1 \circ D_2) H_L(t_0,p)$ is bounded uniformly in $p\in M$. 
By Proposition \ref{ker-H}, same holds for $H^\perp_L$
and hence there exists $C'(t_0)>0$ such that 
\begin{equation}
\begin{split}
C'(t_0) &\geq \textup{tr}_p (D_1 \circ D_2) H^\perp_L(t_0)(p,p) = 
\sum_{\mu\geq \lambda_1} e^{-t\mu} \| D \w_\mu(p)\|^2 \\
&= e^{-t_0 \lambda_1}\sum_{\mu\geq \lambda_1} e^{-t(\mu-\lambda_1)} \| D \w_\mu(p)\|^2 
=: e^{-t_0 \lambda_1} \cdot K(t_0,p).
\end{split}
\end{equation}
Note that $K(t,p)$ is monotonously decreasing as $t \to \infty$ by construction.
Consequently, for any $t\geq t_0$ and any $p\in M$, we conclude
\begin{equation}
K(t,p) \leq C'(t_0) e^{t_0 \lambda_1} =: C(t_0).
\end{equation}
Hence we can estimate for any $t\geq t_0$ and $p\in M$
\begin{equation}
\begin{split}
\textup{tr}_p (D_1 \circ D_2) H^\perp_L(t)(p,p) = e^{-t \lambda_1} \cdot K(t,p)
\leq C(t_0) e^{-t\lambda_1}.
\end{split}
\end{equation}
The statement is now a consequence of the following estimate
\begin{equation*}
\begin{split}
\| D H^\perp_L(t)(p,q)\| &= 
\sum_{\mu \geq \lambda_1} e^{-t\mu} \| D \w_\mu(p)\|  \cdot \|\w_\mu(q)\|
\\ & \leq \sum_{\mu \geq \lambda_1} \frac{e^{-t\mu}}{2} \| D \w_\mu(p)\|^2 
+ \sum_{\mu \geq \lambda_1}  \frac{e^{-t\mu}}{2} \|\w_\mu(q)\|^2 
\leq C(t_0) e^{-t\lambda_1}.
\end{split}
\end{equation*}
\end{proof}

\begin{cor}\label{exponential-heat-kernel-estimate}
Denote the restriction of $H_L$ to the orthogonal complement $\ker \Delta_L^\perp$ by $H_L^\perp$. 
Denote the first non-zero eigenvalue of the Friedrichs extension $\Delta_L$
by $\lambda_1>0$. Consider the Banach spaces $\cH^{k, \A}_{\gamma} (M\times [0,T], S)$,
defined in Definition \ref{H-space}. Then the heat operators\footnote{without convolution in time} define bounded maps
\begin{equation}\label{H-L-no-time}
\begin{split}
&H_L(t): \cH^{k, \A}_{\gamma} (M, S) \to \cH^{k, \A}_{\gamma} (M, S), \\
&H^\perp_L(t): \cH^{k, \A}_{\gamma} (M, S) \cap \ker \Delta_L^\perp \to \cH^{k, \A}_{\gamma} (M, S),
\end{split}
\end{equation}
bounded uniformly in $t\in (0,T]$ for any fixed $T>0$. Moreover, the operator norm of the latter map 
is bounded by $C \, e^{-t \lambda_1}$ for some constant $C>0$ and all times $t>0$. 
\end{cor}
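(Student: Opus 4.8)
The plan is to deduce Corollary \ref{exponential-heat-kernel-estimate} from the pointwise kernel bounds of Theorem \ref{exponential-decay} together with the mapping properties \eqref{mapping1}, \eqref{mapping2} and the structure of the Banach spaces in Definition \ref{H-space}. First I would observe that $H_L(t)$ without convolution in time is simply the family of operators $e^{-t\Delta_L}$ acting on sections over $M$ (for each fixed $t$), which by Theorem \ref{discrete-thm} is well defined and, by Proposition \ref{ker-H} applied fibrewise in $t$, preserves the weighted H\"older regularity encoded in $\cH^{k,\A}_{\gamma}(M,S)$. Boundedness of $H_L(t)$ on $\cH^{k,\A}_{\gamma}(M,S)$, uniformly for $t\in(0,T]$, then follows from the uniform bounds on the polyhomogeneous kernel $H_L(t,\cdot,\cdot)$ and its $\V_b$-derivatives established in the construction in \cite{Ver-Ricci} and recalled in \eqref{heat-asymptotics}; the key point is that the asymptotic expansion is stable under application of b-vector fields and its coefficients lie in the correct weighted spaces, so convolution against this kernel maps $\cH^{k,\A}_{\gamma}$ to itself with norm controlled uniformly up to $t=T$.

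Next I would treat $H^\perp_L(t)$. Writing the heat kernel spectrally as in the proof of Theorem \ref{exponential-decay}, the restriction to $\ker\Delta_L^\perp$ removes the zero modes, and Theorem \ref{exponential-decay} gives, for any $D\in\{\mathrm{Id},X_{i_1}\circ\cdots\circ X_{i_\ell}\}$ and any $t\geq t_0>0$, the pointwise bound $\|D H^\perp_L(t)(p,q)\|\leq C(t_0)\,e^{-t\lambda_1}$. The strategy is to feed these bounds through the definition of the H\"older norms $\|\cdot\|_{k+\A,\gamma}$ in Definition \ref{funny-spaces}: each seminorm is a sum of weighted supremum and H\"older-difference expressions in the images $X(\phi_j\w)$, and since $X H^\perp_L(t)$ has a kernel satisfying the same exponentially decaying pointwise bounds (in both spatial variables, using $(D_1\circ D_2)$ as in the proof of Theorem \ref{exponential-decay}), convolution against it is bounded on $\cH^{k,\A}_{\gamma}(M,S)$ with operator norm $O(e^{-t\lambda_1})$ for $t\geq t_0$. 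For the remaining range $t\in(0,t_0]$ one uses the already-established uniform boundedness on $(0,T]$ together with the fact that $e^{-t\lambda_1}$ is bounded below by a positive constant on $[0,t_0]$; absorbing constants gives the claimed bound $C\,e^{-t\lambda_1}$ for \emph{all} $t>0$.

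One technical point I would be careful about is the H\"older-difference part of the norm, i.e.\ controlling $|(D H^\perp_L(t)(p,q)) - (D H^\perp_L(t)(p',q))|$ divided by $d_M(p,p')^\A + |t-t'|^{\A/2}$: here one needs not just pointwise bounds on the kernel but also on one further b-derivative, which is legitimate because $D$ ranges over \emph{all} finite compositions of the generators $X_i$, so Theorem \ref{exponential-decay} applies to $X_{i_0}\circ D$ as well and the difference quotient is estimated by the mean value theorem along a path inside a single coordinate patch (as permitted by the footnote in Definition \ref{hoelder-A}). Since the proof of Theorem \ref{exponential-decay} already incorporated arbitrary b-derivative orders, no new input beyond bookkeeping is required. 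The main obstacle, as so often in these singular H\"older estimates, is the interplay between the weight $x^\gamma$ and the conical scaling of $d_M$ near $x=0$: one must check that the weighted difference quotients of the kernel are controlled uniformly down to the singularity, which is exactly where the polyhomogeneity of $H_L$ from \cite{Ver-Ricci} and the inclusion $\ker\Delta_L\subset\cH^{k,\A}_{\gamma}$ from Proposition \ref{ker-H} enter; modulo that, the exponential-decay factor passes through the norm untouched and the corollary follows.
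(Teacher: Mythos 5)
Your overall route coincides with the paper's in its second half: for $t\geq t_0$ you read the exponential decay of the operator norm off the pointwise kernel bounds of Theorem \ref{exponential-decay} (applied to all b-derivatives $D$), and for $t\in(0,t_0]$ you combine uniform boundedness with the fact that $e^{-t\lambda_1}$ is bounded below there and absorb constants; this is exactly how the paper concludes.

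The genuine gap is in your first step, the uniform boundedness of $H_L(t)$ on $\cH^{k, \A}_{\gamma} (M, S)$ for $t\in(0,T]$. You assert this follows from ``uniform bounds on the polyhomogeneous kernel $H_L(t,\cdot,\cdot)$ and its $\V_b$-derivatives'' as recalled in \eqref{heat-asymptotics}. But \eqref{heat-asymptotics} is a fixed-time expansion in $x\to 0$; no bound on the kernel is uniform down to $t=0$, since $H_L(t)(\cdot,\cdot)$ concentrates on the diagonal as $t\to 0^+$, and in any case a sup bound on the kernel cannot control the H\"older-difference part of the norm uniformly for small $t$. Likewise, the mapping properties \eqref{mapping1}--\eqref{mapping2} that you invoke are proved in \cite{Ver-Ricci} for the heat operator \emph{with} convolution in time, acting from the weaker space $\cH^{k, \A}_{-2+\gamma}$ into $\cH^{k+2, \A}_{\gamma}$, so they do not apply verbatim to the fixed-time operator. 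The missing idea --- and the paper's actual argument --- is that when $H_L(t)$ acts on $\cH^{k, \A}_{\gamma}(M,S)$, the two additional orders of weight and H\"older regularity of the input offset precisely the two orders on the front face that the time integration would otherwise supply, so the microlocal estimates of \cite{Ver-Ricci} carry over to $H_L(t)$ without convolution and yield boundedness locally uniformly in $t>0$, including the limit $t\to 0^+$. (A smaller inaccuracy: Proposition \ref{ker-H} concerns $\ker \Delta_L$ only and does not show that $H_L(t)$ preserves the weighted regularity of arbitrary inputs.) Once this step is repaired, your treatment of $H_L^\perp(t)$ and of the splitting into small and large times is the paper's argument.
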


\begin{proof}
The central results in \cite[Theorems 3.1 and 3.3]{Ver-Ricci} establish mapping properties
\eqref{mapping1} and \eqref{mapping2} for $H_L$ with convolution in time, where time integration 
leads to two additional orders in the front face asymptotics for the various estimates, 
cf. the microlocal heat kernel construction in \cite[\S 2]{Ver-Ricci}. 
However, if we apply $H_L$ to $\cH^{k, \A}_{\gamma} (M, S)$, the additional weights and higher
H\"older regularity offsets the missing time integration and the arguments in \cite{Ver-Ricci} carry over
ad verbatim to the action of $H_L$ without convolution in time. Thus the maps \eqref{H-L-no-time}
are indeed bounded, locally uniformly in $t>0$. \medskip

Theorem \ref{exponential-decay} implies directly that the operator norm of $H^\perp_L(t)$ 
is bounded by $C(t_0) \, e^{-t \lambda_1}$ for $t\geq t_0>0$ and some constant $C(t_0)>0$,
depending on $t_0$. By above, the operator norm of $H^\perp_L(t)$ is bounded uniformly
for $t \in (0,t_0]$. This implies an exponential estimate for all $t>0$ 
with an appropriate constant $C>0$. 
\end{proof}
The mapping properties of the heat kernel have been crucial for establishing shorttime existence for Ricci de Turck flow in the singular setting under the assumption of tangential stability \cite[Theorem 4.1]{Ver-Ricci}. In the orbifold case,
shorttime existence of the Ricci de Turck flow follows from a slight modification of standard parabolic theory \cite{Ham}. Due to uniqueness in the above setting, these different approaches yield the same solution. In particular, smooth inital data produces a smooth solution.
	
The Ricci de Turck flow constructed in \cite{Ver-Ricci} preserves the conical singularity. However, there are also different approaches where Ricci flows have been constructed to smooth out the singularity \cite{De16,ScSi13}. However, the settings in these papers are very different from ours. There, noncompact pure cones are considered as initial metrics whereas in our setting, we start from compact but not nessecarily pure conical metrics. On the other hand, our assumptions on the cross section are more restrictive. Our cross section metric has to be Einstein with the right normalization of the Einstein constant. In \cite{De16,ScSi13}, arbitrary Riemannian metrics with nonnegative resp.\ positive curvature operator are allowed. By combining these assumptions, we would just get quotients of the sphere and the complex projective space as possible cross sections.
\section{Weighted Sobolev spaces}\label{section-weighted}

We proceed in the previously set notation of a compact manifold $(M^m,g)$ with 
an isolated conical singularity with a conical neighborhood $\cC(F)=(0,1) \times F$
over a smooth compact Riemannian manifold $(F^n,g_F)$ with $n=(m-1)$
and a conical metric $g$. As before, $g \restriction \cC(F) = dx^2 \oplus x^2 g_F$
up to higher order terms. In this subsection we define Sobolev and H\"older spaces
on $M$ with values in the vector bundle $E$ associated to $TM$,
and study their embedding and multiplication properties. The vector bundle 
$E$ can be e.g. the vector bundle $S$ of symmetric $2$-tensors, or simply $TM$.

\begin{defn}\label{product-spaces}
Consider $s \in \N_0$ and $\delta \in \R$. Let $x: M \to (0,1]$ be a smooth nowhere
vanishing function which coincides with the radial function over the conical neighborhood
$\cC(F)\subset M$. Let $\nabla$ denote the Levi Civita connection on $E$, induced by $g$, 
and choose local bases $\{X_1, \ldots, X_m\}$ of $\V_b$. We consider the space 
$L^2(M,E)$ of square-integrable sections of $E$ with respect to the volume form of $g$ and
the pointwise inner product $\| \cdot \|_g$ on fibres of $E$ induced by $g$. \medskip

\begin{enumerate}
\item We define the Sobolev space $H^s_\delta (M,E)$ as 
the closure of compactly supported smooth sections $C^\infty_0(M,E)$ under
\begin{align*}
\left\|u\right\|_{H^s_\delta}= \sum_{k=0}^s \sum_{j=1}^m \| x^{k-\delta-\frac{m}{2}} 
\nabla^k_{X_j}u \|_{L^2}.
\end{align*}
Note that $L^2(M,E) = H^0_{-\frac{m}{2}}(M,E)$ by construction. 

\item We define the Banach space $C^{s}_\gamma(M,E)$ 
as the closure of  $C^{\infty}_{0}(M,E)$ under 
\begin{align*}
\left\|u\right\|_{C^s_\delta} = \sum_{k=0}^s \sum_{j=1}^m 
\sup_{q \in M}\| \left(x^{k-\delta} \nabla^k_{X_j} u\right) (q)\|_{g}.
\end{align*}
\end{enumerate}
\end{defn}

The norms for different choices of local bases $\{X_1, \ldots, X_m\}$ of $\V_b$ and different choices
of Riemannian metrics $g$ with isolated conical singularities, are equivalent due to compactness of $M$ and $F$. \medskip

By interpolation and duality, we may define the Sobolev spaces 
$H^s_\delta (M,E)$ for any $s\in \R$. The advantage of these spaces in contrast to the spaces in \S \ref{spaces-section}
is that they satisfy Sobolev embedding properties very similar to the classical 
results. As asserted by Pacini \cite[Corollary 6.8, Remark 6.9]{Pacini} we find the following 
embedding a multiplicative properties.

\begin{thm}\label{embedding-theorem} 
The spaces in Definition \ref{product-spaces} admit the following properties. 
\begin{enumerate}
     \item For $\beta \lvertneqq \delta$ we have 
	$C^{s}_{\delta}(M,E) \subset H^s_{\beta}(M,E)$.
	\item For $N > m/2$ and $\beta \leq \delta$ we have
	$H^{s+N}_{\delta}(M,E) \subset C^{s}_{\beta}(M,E)$.
	\item Consider $s > m/2$. 
	Then the multiplication operation is continuous
	\begin{equation*}
*: \ H^{s}_{\delta_1}(M,E)\times H^{s}_{\delta_2}
(M,E)\to H^{s}_{\delta_1 + \delta_2}(M, E).
	\end{equation*}
	\end{enumerate}
\end{thm}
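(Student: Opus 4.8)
The plan is to localize the problem. Away from the conical tip, $M$ looks like a compact manifold with boundary and the weights are bounded above and below, so all three statements reduce there to classical facts about Sobolev and Hölder spaces; the entire content therefore sits in a neighbourhood of the singularity, where I would use the standard dyadic rescaling device.

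For (1) I would argue directly, without rescaling. For $u\in C^\infty_0(M,E)$ the definition of the $C^s_\delta$-norm gives the pointwise bound $\|\nabla^k_{X_j}u\|_g\le \|u\|_{C^s_\delta}\,x^{\delta-k}$ for every $k\le s$, hence
\[
\|x^{k-\delta-\frac m2}\nabla^k_{X_j}u\|^2_{L^2}\le \|u\|^2_{C^s_\delta}\int_M x^{2(\delta-\beta)-m}\,dV_g .
\]
Over the conical neighbourhood $dV_g$ is comparable to $x^{m-1}\,dx\,dV_{g_F}$ and away from it the integrand is bounded, so the right-hand side is finite exactly when $\int_0^1 x^{2(\delta-\beta)-1}\,dx<\infty$, i.e. exactly when $\beta<\delta$, which is the hypothesis. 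Summing over $k\le s$ and over the finitely many $X_j$ yields $\|u\|_{H^s_\beta}\le C\|u\|_{C^s_\delta}$ on $C^\infty_0(M,E)$, and since both spaces are the completions of $C^\infty_0(M,E)$ in their respective norms, the claimed inclusion of Banach spaces follows.

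For (2) and (3) I would cover the conical neighbourhood $\cC(F)=(0,1)\times F$ by the overlapping dyadic annuli $A_j:=[2^{-j-1},2^{-j+1}]\times F$ together with one fixed open set bounded away from the tip, choose a subordinate partition of unity, and on each $A_j$ rescale by $x=2^{-j}y$ to the fixed cylinder-with-boundary $A:=[1/2,2]\times F$. Under this rescaling the local generators of $\V_b$ are essentially scale invariant and the exact cone metric obeys $dx^2+x^2g_F=2^{-2j}(dy^2+y^2g_F)$, hence has scale-invariant Christoffel symbols; consequently the volume form, the fibre metric of $E$, the covariant derivatives and the weight $x^{-\gamma}\sim 2^{j\gamma}$ each contribute only explicit powers of $2^{j}$. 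On the fixed domain $A$ one then invokes the classical Sobolev embedding $H^{s+N}(A,E)\hookrightarrow C^s(A,E)$ for $N>m/2$ (for (2)), and the classical fact that $H^s(A,E)$ is a multiplication algebra for $s>m/2$ together with the Leibniz rule (for (3)). Collecting the $2^{j}$-factors: for (2) the resulting exponent is $\le 0$ precisely under the stated inequality between $\beta$ and $\delta$, so the local estimates are uniform in $j$ and, the target $C^s_\beta$ being built from a supremum, they combine at once over $j$; for (3) the weights of the two factors add, so the $2^{j}$-powers cancel identically and the local $H^s$-estimates sum via Cauchy–Schwarz. Adding the contribution from the region away from the tip finishes the argument. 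Alternatively, all three statements are exactly the content of Pacini \cite[Corollary 6.8, Remark 6.9]{Pacini} once the present weight conventions are matched with his, and one may simply quote it.

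The step I expect to be the main obstacle is the weight bookkeeping in (2): one must verify that the various $j$-dependent scaling factors coming from the volume form, the fibre metric, the derivatives and the two weight exponents assemble into an exponent controlled by the hypothesis relating $\beta$ and $\delta$, and in particular that the borderline case $\beta=\delta$ still works because $C^s_\beta$ is defined through a supremum over the annuli rather than an $\ell^2$-sum. Everything else — the classical embedding and algebra property on the fixed cylinder $A$, the asymptotics of $dV_g$ near the tip, and the passage to completions — is routine.
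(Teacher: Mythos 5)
Your proposal is correct, but it does more than the paper, which offers no argument at all: there the three properties are simply quoted from Pacini \cite[Corollary 6.8, Remark 6.9]{Pacini}, exactly as in the alternative you mention in your last sentence. Your self-contained sketch is essentially the standard proof underlying that reference: the direct estimate for (1) is right (note the small typo in your display, where the weight in the $H$-norm should be $\beta$, not $\delta$; your subsequent integrand $x^{2(\delta-\beta)-m}$ shows you meant $\beta$, and the condition $\beta\lvertneqq\delta$ is then exactly what makes $\int_0^1 x^{2(\delta-\beta)-1}dx$ finite), and the dyadic rescaling for (2) and (3) is the right device. The weight bookkeeping you flag does close: with the normalization of Definition \ref{product-spaces} the norm of a piece supported on the annulus $A_j$ satisfies $\|u\|_{H^s_{\delta}(A_j)}\approx 2^{j\delta}\|\tilde u\|_{H^s(A)}$ and $\|u\|_{C^s_{\beta}(A_j)}\approx 2^{j\beta}\|\tilde u\|_{C^s(A)}$ (the $m/2$-shift in the Sobolev weight is exactly absorbed by the factor $x^{m-1}dx\,dV_{g_F}$ of the conic volume form, and the fibre-metric scaling factors cancel because source and target norms use the same $g$-induced fibre metric), so (2) gives a per-annulus factor $2^{-j(\delta-\beta)}$, uniform for $\beta\leq\delta$ with the borderline case saved by the sup-type target norm as you say, while in (3) the factors $2^{j\delta_1}\cdot 2^{j\delta_2}=2^{j(\delta_1+\delta_2)}$ cancel identically and the pieces recombine by bounded overlap of the annuli together with $\sum_j a_jb_j\leq(\sup_j a_j)\sum_j b_j$. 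What your route buys is a proof that is visibly independent of matching Pacini's weight conventions; what the paper's citation buys is brevity. Either is acceptable here.
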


Consider a second order elliptic differential operator $\Delta$ acting on $u \in C^\infty_0(M, E)$ 
such that for $u$ with compact support in $\cC(F)$
\begin{equation}\label{reg-sing}
\Delta u = \left( - \partial_x^2 - \frac{n}{x} \partial_x + \frac{1}{x^2} \square \right) u + \mathscr{O} \, u,
\end{equation}
where $\mathscr{O}$ is a higher order term, i.e. a second order combination of $b$-vector 
fields $\V_b$, weighted with $x^{-1}$ and functions that are smooth up to $x=0$. 
We can now prove the following auxiliary theorem.

\begin{thm}\label{thm-iso}
Consider an elliptic second order differential operator $\Delta$ acting on $u \in C^\infty_0(M, E)$
with the regular-singular expression \eqref{reg-sing} near the conical singularity. 
Assume that $\Delta u = 0$ admits no solutions in $L^2(M,E)$. Then for a generic $\delta \in (-1,1]$,
excluding the exceptional weights
\begin{equation}
\Lambda := \left\{ \pm \sqrt{\lambda + \left(\frac{n-1}{2} \right)^2} \mid \lambda \in \textup{Spec} \, \square \right\}.
\end{equation}
the following mapping is an isomorphism
\begin{equation}\label{Laplace-vf}
\Delta: H^{2}_{-\frac{m}{2}+1+\delta}(M,E) \to H^{0}_{-\frac{m}{2}-1+\delta}(M,E).
\end{equation}
\end{thm}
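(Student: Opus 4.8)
The plan is to recognize that Theorem~\ref{thm-iso} is a standard Fredholm/isomorphism statement for a regular-singular elliptic operator, proved via the Mazzeo-type edge calculus (here the simpler cone calculus) together with a relative index computation. First I would invoke the general cone-calculus parametrix construction: since $\Delta$ has the regular-singular form \eqref{reg-sing}, its indicial roots at $x=0$ are precisely the $\pm\sqrt{\lambda+((n-1)/2)^2}$ for $\lambda\in\operatorname{Spec}\square$, i.e.\ the set $\Lambda$. For any weight $\delta$ with $-\tfrac{m}{2}+1+\delta$ not an indicial root — which by the normalization in \eqref{Laplace-vf} means $\delta\notin\Lambda$ — the operator in \eqref{Laplace-vf} is Fredholm. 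This is the classical statement of Lockhart--McOwen / Mazzeo, and for cones it also follows from the b-calculus of Melrose already cited in the paper; I would state it as such and reference \cite{Pacini} or \cite{KLP:FDG}, whose weighted Sobolev spaces match Definition~\ref{product-spaces}.

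Next I would pin down the index. The point of restricting to $\delta\in(-1,1]$ and excluding $\Lambda$ is that $(-1,1)$ contains no indicial root except possibly those coming from $\lambda$ with $\sqrt{\lambda+((n-1)/2)^2}<1$, and crossing an indicial root changes the index by the (finite) dimension of the corresponding space of ``formal solutions'' $x^{\mu}$ times a $\square$-eigensection. So the relative index theorem gives that the index of \eqref{Laplace-vf} is constant on each component of $(-1,1]\setminus\Lambda$, and by the symmetry $\mu\mapsto -\mu$ of the indicial roots together with formal self-adjointness of $\Delta$ (the cross-sectional operator $\square$ is symmetric, and the $\partial_x^2+\tfrac{n}{x}\partial_x$ part is symmetric with respect to $x^n\,dx$), the index at weight parameter $\delta$ is the negative of the index at $-\delta$; in particular the index vanishes at the symmetric point and, since no indicial roots lie strictly between symmetric weights in a neighbourhood of $0$, it vanishes throughout the relevant range. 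Hence \eqref{Laplace-vf} has index zero for all $\delta\in(-1,1]\setminus\Lambda$.

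It then remains to show injectivity (equivalently, by index zero, surjectivity). Here I would use the hypothesis that $\Delta u=0$ has no $L^2$ solutions. If $u\in H^{2}_{-m/2+1+\delta}$ solves $\Delta u=0$, elliptic regularity in the cone calculus yields a partial asymptotic expansion of $u$ in powers $x^{\mu}$, $\mu\in\Lambda$; since $\delta>-1$, the weight $-\tfrac{m}{2}+1+\delta$ is strictly above the bottom $L^2$-threshold weight $-\tfrac{m}{2}$ by more than the borderline indicial behaviour, so $u$ decays fast enough to lie in $L^2$ — more precisely, the leading exponent of $u$ exceeds $-\tfrac{m}{2}$ because the only indicial roots in the relevant half-line that could obstruct this are excluded by $\delta\notin\Lambda$ and $\delta>-1$. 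Thus $u\in L^2(M,E)$, and the no-$L^2$-kernel hypothesis forces $u=0$. Combined with index zero, this gives that \eqref{Laplace-vf} is an isomorphism.

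The main obstacle I expect is the bookkeeping of indicial roots versus the admissible weight window: one must check carefully that for $\delta\in(-1,1]\setminus\Lambda$ the weight $-\tfrac{m}{2}+1+\delta$ genuinely avoids every indicial root and that no indicial root of modulus $<1$ slips into the open interval between $0$ and $\delta$ in a way that would change the index — this is exactly why the statement says \emph{generic} $\delta$ and excludes $\Lambda$. Relatedly, promoting the $H^2_{\delta}$-kernel element into $L^2$ requires the asymptotic-expansion (polyhomogeneity) statement for solutions, which is where the full strength of the cone calculus (or the explicit separation-of-variables analysis as in \cite{KLP:FDG}) is used; everything else is a soft Fredholm-plus-symmetry argument.
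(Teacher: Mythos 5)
Your overall architecture (cone-calculus Fredholmness, then kill kernel and cokernel using the no-$L^2$-kernel hypothesis) is close in spirit to the paper, and your Fredholm step and your injectivity step are fine: in fact injectivity needs no asymptotic expansion at all, since by the definition of the weighted spaces $H^{2}_{-\frac{m}{2}+1+\delta}(M,E)\subset H^0_{-\frac m2}(M,E)=L^2(M,E)$ as soon as $\delta\geq -1$. The genuine gap is in your index-zero step. The relative index theorem together with the self-adjointness symmetry $\operatorname{ind}(\delta)=-\operatorname{ind}(-\delta)$ only gives index zero on the connected component of $(-1,1)\setminus\Lambda$ containing $0$. If $\Lambda$ meets $(0,1)$ — which the hypotheses of the theorem do not exclude, since $\square$ is a general tangential operator — then for $\delta$ beyond the smallest positive element of $\Lambda$ the very same symmetry-plus-jump argument yields $\operatorname{ind}(\delta)=-\tfrac12\sum_{\mu\in\Lambda\cap(-\delta,\delta)}d_\mu<0$, so ``injective and index zero'' cannot be the route; excluding the exceptional weights does not help, because the index genuinely changes across each point of $\Lambda$. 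You flag exactly this bookkeeping issue as your ``main obstacle'' but then dismiss it as the reason for the word ``generic'', which does not repair the argument. Tellingly, your index computation never uses the hypothesis that $\Delta$ has no $L^2$-solutions, whereas index zero on all of $(-1,1]\setminus\Lambda$ is false without it.

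The paper closes this by never computing the index: surjectivity is proved directly by duality. For a Fredholm map the cokernel of \eqref{Laplace-vf} is identified, via the $L^2$ pairing and density of $C^\infty_0$, with distributional solutions of $\Delta v=0$ lying in the dual space $H^{0}_{-\frac m2+1-\delta}(M,E)$, and this space embeds into $L^2(M,E)$ precisely because $\delta\leq 1$; the hypothesis then kills the cokernel. Note that the window $(-1,1]$ is calibrated exactly so that \emph{both} the domain at weight $\delta$ and the dual space at weight $-\delta$ sit inside $L^2$ — the upper endpoint $\delta\leq 1$ plays an essential role that your argument never invokes. To salvage your route you would either have to reproduce this duality argument anyway (at which point the index discussion is superfluous), or first show that the no-$L^2$-kernel hypothesis forces $\Lambda\cap(0,1)=\varnothing$ — e.g.\ by observing that a root in $(0,1)$ would, via Fredholmness at nearby weights, produce either a kernel element or a cokernel representative lying in $L^2$ — which is again the same duality input. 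Also be aware that both your symmetry argument and the paper's adjoint identification quietly use formal self-adjointness of $\Delta$ with respect to $L^2(M,E)$, which should be stated as part of the setup.
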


\begin{proof}
Classical cone calculus, cf. Mazzeo \cite[Theorem 4.4]{Maz:ETO} asserts that
the mapping \eqref{Laplace-vf} is a Fredholm mapping for $\delta \notin \Lambda$. Hence, perturbing the weight
$\delta>0$, we can always assume that \eqref{Laplace-vf} is Fredholm.

By assumption, any solution to $\Delta u = 0$ is $u \notin L^2(M,E)$.
In particular, since $H^{2}_{-\frac{m}{2}+1+\delta}(M,E) \subset L^2(M,E)$, 
the mapping \eqref{Laplace-vf} has trivial kernel. It remains
to prove triviality of the cokernel. Consider the adjoint to \eqref{Laplace-vf},
defined with respect to the $L^2(M,E)$ pairing,
given by the Laplacian $\Delta$ acting as 
\begin{equation}\label{Laplace-vf-2}
\Delta: H^{0}_{-\frac{m}{2}+1-\delta}(M,E) \to H^{-2}_{-\frac{m}{2}-1-\delta}(M,E).
\end{equation}
Triviality of the cokernel for \eqref{Laplace-vf} is equivalent to triviality of the
kernel for \eqref{Laplace-vf-2}. However, since $H^{0}_{-\frac{m}{2}+1-\delta}(M,E)
\subset L^2(M,E)$, the kernel of \eqref{Laplace-vf-2} is trivial. Hence \eqref{Laplace-vf}
is indeed an isomorphism.
\end{proof}

\section{The space of Ricci-flat metrics with conical singularities}\label{projection-section}

Let $\overline{M}$ be a compact manifold with boundary $\partial M = F$ as before.
We write $\cM$ for the space of Riemannian metrics on the open interior $M$.
Consider a fixed Ricci-flat background metric $h_0$ on $M$ with isolated conical 
singularities in the sense of Definition \ref{cone-metric}. 
We assume that $h_0$ is tangentially stable
and consider for any $k\geq 2$ and $\gamma>0$ the weighted H\"older space 
\begin{align}\label{H-space-2}
\cH := \cH^{k, \A}_{\gamma} (M\times [0,T], S),
\end{align} 
defined in Definition \ref{H-space}.
We assume that the corresponding Lichnerowicz Laplacian $\Delta_{L,h_0}$ with domain $C^\infty_0(M,S)$
is bounded from below and write $\Delta_{L,h_0}$ for the its Friedrichs self-adjoint extension again. 
Same notation holds if $h_0$ is replaced by another Ricci-flat metric $h$ on $M$ with 
a non-negative Lichnerowicz Laplacian $\Delta_{L,h}$.
We can now introduce an integrability condition as follows.

\begin{defn}\label{integrability}
	We say that $h_0$ is \emph{integrable} if for some $\gamma >0$ 
	there exists a smooth finite-dimensional manifold $\cF\subset \cH$ such that 
	\begin{enumerate}
	      \item  $T_{h_0} \cF = \ker \Delta_{L, h_0} \subset \cH$,
		\item all Riemannian metrics $h \in \cF $ are Ricci-flat.
		\end{enumerate}
\end{defn}

\begin{remark}
Due to the structure of $\cH$, any metric $h \in \cF \subset \cH$ is automatically
a metric on $M$ with an isolated conical singularity.  Even more, all metrics $h\in\cF$ in fact admit the same Einstein cross section. Indeed, at first, it follows from the definition of the space $\cH$ that the leading order terms of the metrics $h \in \cH$ 
are pure trace elements with respect to $h_0$ and hence metrics on the cross-section are conformal for all $h\in \cF$. But as these metrics are Einstein as well, they must be the same, see e.g.\ \cite[Theorem 1*]{Kue95}.
\end{remark}

We proceed with some properties of integrable conical manifolds. 

\begin{lem}\label{parallelvf}
	Let $(M,g)$, be a compact manifold with a conical singularity. Then the space of parallel vector fields on $M$ is trivial.
\end{lem}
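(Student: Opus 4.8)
The claim is that a compact manifold $(M,g)$ with an isolated conical singularity admits no nontrivial parallel vector field. The plan is to argue by contradiction: suppose $V$ is a parallel vector field, $\nabla V \equiv 0$. Then $|V|_g^2$ is a constant function on $M$ (since $\partial_i |V|^2 = 2 g(\nabla_i V, V) = 0$), so either $V\equiv 0$ or $V$ is nowhere vanishing. In the latter case we must derive a contradiction from the conical geometry near $x=0$.

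The first step is to record the decay and regularity of a parallel vector field near the singularity. Working in the conical neighborhood $\cC(F)=(0,1)_x\times F$ with the model metric $\overline{g}=dx^2+x^2 g_F$ (the higher-order term $h=O(x)$ only perturbs the estimates below), write $V = f(x,z)\,\partial_x + \sum_j W^j(x,z)\,\partial_{z_j}$. The conditions $\nabla_{\partial_x}V=0$ and $\nabla_{\partial_{z_j}}V=0$ become a system of first-order ODEs in $x$ coupled with equations on $F$. Using the Christoffel symbols of the cone, $\nabla_{\partial_x}\partial_x=0$, $\nabla_{\partial_x}\partial_{z_j}=\nabla_{\partial_{z_j}}\partial_x=\tfrac1x\partial_{z_j}$, $\nabla_{\partial_{z_i}}\partial_{z_j}= \Gamma^k_{ij}(g_F)\,\partial_{z_k} - x\,(g_F)_{ij}\,\partial_x$, one finds $\partial_x f = 0$ and $\partial_x W^j + \tfrac1x W^j = 0$, hence $f$ is independent of $x$ and $W^j = x^{-1}\,W^j_0(z)$. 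Feeding this into the $z$-derivative equations then forces, after contracting, $f\equiv 0$ on $F$ and $W^j_0$ to correspond (via $x W^j_0 \partial_{z_j} = $ a vector field whose $\overline g$-length is $x^{-1}|W_0|_{g_F}$, times $x$... here one must be careful) — in any case the resulting length $|V|_{\overline g}^2 = f^2 + x^2 |W|^2_{g_F} = f^2 + |W_0|^2_{g_F}/1$ must match a constant, which together with the equation on $F$ (an overdetermined parallel-type equation on the cross section) forces $V$ to be radial-invariant and tangent; a short computation with the mixed equation $\nabla_{\partial_{z_j}}V=0$ projected onto $\partial_x$ gives $(g_F)_{ij}W^i_0 \cdot(\text{nontrivial factor})=0$, so $W_0\equiv 0$ as well.

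An alternative and cleaner route, which I would probably prefer to present, avoids the ODE bookkeeping: a parallel vector field $V$ has constant length, and if nonzero it is nowhere vanishing, so it trivializes a real line sub-bundle of $TM$ and in particular forces the Euler characteristic contribution near the cone tip to vanish in a way incompatible with the link being a closed manifold $F$ of dimension $n\ge 1$; more directly, extend $V/|V|$ to a continuous unit vector field — but continuity at the singular point fails in general, so instead one uses that $\nabla V=0$ implies $V$ generates a flow of isometries, and the singular point is necessarily a fixed point of the isometry group (it is the unique point at infinite-order-vanishing of the injectivity radius / the unique non-manifold point), so $V$ must vanish at the tip; combined with constancy of $|V|$, this yields $V\equiv 0$. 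The technical content is showing the singular point is fixed by every isometry, which follows because isometries preserve the metric completion and its unique singular point.

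\textbf{Main obstacle.} The delicate point is the behavior exactly at $x\to 0$: a priori a "parallel vector field on $M$" lives only on the open interior, and one must ensure the equations $\nabla V=0$ propagate the right decay so that no solution with, say, $W^j = x^{-1}W_0^j(z)$ and $W_0\neq 0$ survives — such a $V$ has bounded $\overline g$-length but its components blow up, and one needs the cross-sectional equation (essentially: $W_0$ would have to be a vector field on $F$ with $\nabla^{g_F}W_0 = -g_F$, i.e. a nontrivial concircular/homothetic-type field, which a closed Einstein manifold $(F,g_F)$ with positive Einstein constant $n-1$ does not admit except on the round sphere, and even there it is incompatible with the remaining equations) to rule it out. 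So the real work is the short representation-theoretic / ODE argument on the link $F$ showing the model parallel equation on the cone has no nonzero solution; the global step is then immediate from constancy of $|V|^2$.
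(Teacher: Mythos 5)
Your strategy (constant length of a parallel field plus a reduction to an equation on the cross-section) differs from the paper's proof, which splits into two cases: for orbifold singularities it lifts the field to the local smooth cover and uses that a nontrivial group acting freely on the link has no nonzero fixed vector at the tip; for non-orbifold singularities it invokes Gallot's theorem that a non-flat Riemannian cone has irreducible holonomy and transfers this to $(M,g)$ by continuity of Neumann eigenvalues of the connection Laplacian on small domains near the singularity. As written, however, your argument has genuine gaps.

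First, the endgame of your model-cone computation is incorrect. The correct reduction from $\nabla V=0$ on the exact cone is $\partial_x f=0$, $W^j=x^{-1}W_0^j(z)$, and on $F$ the coupled system $df=W_0^\flat$ and $\nabla^{g_F}W_0=-f\,\mathrm{id}$, hence $\nabla^2 f=-f\,g_F$. Your claims that ``contracting'' yields $f\equiv0$ and that the mixed equation forces $W_0\equiv 0$ are unjustified, and in fact false on the model: when $(F,g_F)$ is the round unit sphere the cone is flat $\mathbb{R}^{n+1}\setminus\{0\}$ and the constant vector fields solve the full system, so it is not true that ``even there it is incompatible with the remaining equations.'' What is true (via Obata's theorem, which needs no Einstein hypothesis) is that a nonconstant $f$ forces $F$ to be isometric to the unit sphere; so your computation only rules out non-spherical links, and the orbifold case $F=\mathbb{S}^n/\Gamma$ still requires a separate invariance argument (a solution would lift to a $\Gamma$-invariant constant vector on $\mathbb{R}^{n+1}$, impossible since $\Gamma$ acts freely on $\mathbb{S}^n$) --- precisely the step the paper supplies and you omit. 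Second, your computation is carried out for the exact model metric $dx^2+x^2g_F$, whereas the lemma concerns $g=dx^2+x^2g_F+h$ with $h=O(x)$; saying the perturbation ``only perturbs the estimates'' is empty here, since the argument is an exact ODE identity and you give no stability estimates. One needs an actual transfer mechanism, e.g.\ the paper's eigenvalue-continuity argument, or a rescaling/blow-up limit producing from $V$ a parallel field on an annulus of the exact cone. Finally, the ``alternative cleaner route'' is not a proof: even granting that isometries of the metric completion fix the tip, the flow of a parallel field could a priori move points along the distance spheres about the tip, so nothing forces $|V|\to 0$ there; and the Euler-characteristic remark is not an argument.
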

\begin{proof}
	Let us first consider the case of orbifolds. If $X$ was a parallel vector field on $M$ it would lift in a close neighbourhood of the singularity to a parallel vector field $\tilde{X}$ on a small ball $\tilde{B}$ which is invariant under the action of a discrete group $G$ that fixes the origin $0 \in \tilde{B}$. As $\tilde{X}$ is parallel, it can be extended continuously to the origin where it is invariant under the action of $G$ on $T_0\tilde{B}$ via the tangent map. But as we have an orbifold singularity, this action cannot have invariant subspaces and so $\tilde{X}$ and hence also $X$ must vanish.
\medskip

	In the case where $(M,g)$ has a non-orbifold conical singuarity, and hence is not flat, we recall a theorem by Gallot \cite{Gal79} asserting that a Riemannian cone has irreducible holonomy. Thus, the Riemannian (non-orbifold) cone 
	cannot admit a parallel vector field, and in particular the connection Laplacian on vector fields on any domain $V$ of the cone
	with Neumann boundary conditions, has a trivial kernel. 
	We want to conclude that $(M,g)$ has no parallel vector fields as well. \medskip
	
	If $(M,g)$ admits a parallel vector field, the smallest Neumann eigenvalue for the connection Laplacian on vector fields would be zero for any domain $U\subset M$. However, if we take $U$ very close to the singularity, $(U,g)$ is almost isometric to a small domain $V$ on a non-flat Riemannian cone. Since the eigenvalues of the connection Laplacian acting on vector fields on $V$, 
	with Neumann boundary conditions, are positive by the above, they must also be positive on $U$ as they depend continuously on the metric.
\end{proof}

\begin{prop}\label{smoothvectorbundle}
	Let $(M,h_0)$ be a Ricci-flat metric with a conical singularity and suppose, it is integrable
	with a smooth finite-dimensional manifold $\cF \subset \cH$ of Ricci-flat metrics.
	Then, there exists an open neighbourhood $\cU \subset \cH$ such that for every $h\in \cU\cap\cF$, 
	there exists an injection $i_h:T_h\cF\to\ker(\Delta_{L,h})$.
\end{prop}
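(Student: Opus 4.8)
The plan is to construct the injection $i_h$ by differentiating the Ricci-flatness condition along $\cF$ and comparing the resulting linearized equation to the kernel equation for $\Delta_{L,h}$. Since $\cF$ is a smooth manifold of Ricci-flat metrics, for any curve $t\mapsto h(t)\in\cF$ with $h(0)=h$ and $\dot h(0)=k\in T_h\cF$, differentiating $\mathrm{Ric}(h(t))=0$ at $t=0$ gives that $k$ lies in the kernel of the linearized Ricci operator $D\mathrm{Ric}_h$. Recall the standard Bianchi-gauge identity: on a Ricci-flat background, the Lichnerowicz Laplacian satisfies $\Delta_{L,h} k = -2\, D\mathrm{Ric}_h(k) + \mathcal{L}_{V(k)} h$ for a suitable first-order vector field $V(k)$ built from the divergence and trace of $k$ (the linearization of the de Turck term). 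Thus the first step is to set $i_h(k) := k - \mathcal{L}_{W} h$, where $W$ is chosen so that $i_h(k)$ is in de Turck gauge (divergence-free of the appropriate trace-adjusted type), so that $\Delta_{L,h}(i_h(k)) = -2 D\mathrm{Ric}_h(i_h(k)) = 0$; one must check $W$ exists and depends linearly on $k$.

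The second step is to verify that $i_h$ actually lands in $\kernel(\Delta_{L,h})$ as an operator on the correct function space, i.e. that $i_h(k)\in\cH$ (so that $\Delta_{L,h}$ is the Friedrichs extension from Proposition \ref{Friedrichs} and the kernel is the analytically meaningful one). Here I would use that $\cF\subset\cH$ by hypothesis, so $k=\dot h(0)\in T_h\cF\subset\cH$; the gauge correction $\mathcal{L}_W h$ must be shown to preserve $\cH$, which requires solving an elliptic equation for the gauge vector field $W$ with the right weighted mapping properties — this is where Theorem \ref{thm-iso} on the isomorphism property of the relevant elliptic operator on weighted spaces enters, together with Lemma \ref{parallelvf} to rule out a kernel (parallel vector fields) that would obstruct solvability or uniqueness of $W$.

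The third step is injectivity of $i_h$. Suppose $i_h(k)=0$, i.e. $k=\mathcal{L}_W h$ is a pure Lie-derivative deformation. One then argues that such a $k$ cannot be a nonzero tangent vector to $\cF$: either by noting that $\cF$ is transverse to the gauge orbits by construction (the tangent space at $h_0$ is $\kernel(\Delta_{L,h_0})$, which consists of genuine — e.g. TT-gauge — deformations, and this transversality is an open condition so it persists on the neighborhood $\cU$), or more directly by showing $W$ must be a parallel (hence trivial, by Lemma \ref{parallelvf}) vector field when $\mathcal{L}_W h$ is itself Ricci-flat-preserving and in the right gauge. Shrinking $\cU$ if necessary guarantees this transversality.

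The main obstacle I expect is the second step: controlling the gauge vector field $W$ in the weighted H\"older/Sobolev scale near the conical singularity, ensuring both that $W$ exists (surjectivity of the vector Laplacian onto the appropriate weighted target, via Theorem \ref{thm-iso}) and that the correction $\mathcal{L}_W h$ does not destroy membership in $\cH$ nor shift the indicial weights across an exceptional value in $\Lambda$. One has to choose the weight $\delta\in(-1,1]$ carefully, use that $h$ is $C^0$-close to $h_0$ so the indicial roots are unchanged, and invoke Lemma \ref{parallelvf} to know the vector Laplacian has no $L^2$-kernel so that Theorem \ref{thm-iso} applies. Everything else is the standard Ebin–Bianchi slice argument transplanted to the conical setting.
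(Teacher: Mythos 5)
Your proposal follows essentially the same route as the paper's proof: correct $k\in T_h\cF$ by a Lie derivative term whose vector field solves the vector-Laplace equation arising from the linearized Ricci identity (solvability via Theorem \ref{thm-iso}, with Lemma \ref{parallelvf} ruling out a kernel), and deduce injectivity from the triviality of $\ker(\Delta_{L,h_0})\cap\{\mathcal{L}_X h_0\}$ together with continuity/openness in $h$. The only point the paper treats that you omit is the case of low dimensions $m\le 4$, where the weight $\delta=1$ used in Theorem \ref{thm-iso} need not be non-exceptional; there the singularity is automatically an orbifold and the same argument runs in unweighted spaces.
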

\begin{proof} We first consider the case of non-orbifold conical singularities. 
	We will show that the injection is constructed by adding Lie derivatives. First of all, we note that
	for any $l \in \N, l \geq 2$
	\begin{align}\label{intersection0}
	\ker(\Delta_{L,h_0})\cap \left\{\mathcal{L}_X{h_0}\mid X\in H^l_{-m/2+2}(M,TM)\right\}=\left\{0\right\}.
	\end{align}
	This holds for the following reason: Suppose that $X\in H^l_{-m/2+2}(M,TM)$ is such that $\mathcal{L}_X{h_0}\in\ker(\Delta_{L,h_0})$. Then, $0=\Delta_L(\mathcal{L}_X{h_0}) = \mathcal{L}_{\Delta X}{h_0}$, see e.g.\ \cite[pp. 28-29]{Lic61} for the latter equality. Here $\Delta$ denotes the connection Laplace on vector fields. 
	Thus $\Delta X$ is a Killing vector field and, since $h_0$ is Ricci-flat, $\Delta X$ is parallel. 
	Therefore, $\Delta X$ vanishes due to Lemma \ref{parallelvf}. This in turn implies that $\nabla X=0$, 
	where we use integration by parts since $\Delta X \in H^0_{-m/2}(M,TM) = L^2(M, TM)$. 
	Thus, $X$ is a parallel vector field and vanishes due to Lemma \ref{parallelvf}. \medskip
 
     Because $\ker(\Delta_{L,h_0})=T_{h_0}\cF$ and \eqref{intersection0} holds, continuity implies
	\begin{align}\label{intersection}
	T_h\cF\cap \left\{\mathcal{L}_X{h}\mid X\in H^l_{-m/2+2}(M,TM)\right\}=\left\{0\right\}
	\end{align}
	for $h\in \cF$ close enough to $h_0$ in $\cH$.
	Consider now $k\in T_h\cF$. Because $\cF$ consists of Ricci-flat metrics, we find
	\begin{align}\label{Ric-linearization}
	0=\frac{1}{2} \, \Delta_{L,h} k+\mathcal{L}_{\, \mathrm{div} \, k \, - \, \frac{1}{2}\nabla \, \trace k}h,
	\end{align}
	because the right hand side is the linearization of the Ricci tensor at $h$ 
	in the direction of $k$, see e.\ g.\ \cite[p.\ 63]{Besse}.
	Now, write $\tilde{k}=k+\mathcal{L}_Xh$ for some vector field $X$. Then, 
	we compute using \eqref{Ric-linearization}
	\begin{align}\label{Lambda-kappa}
	\Delta_{L,h}\tilde{k}=\Delta_{L,h}k+\mathcal{L}_{\Delta X}h=
	-2\mathcal{L}_{\mathrm{div} \, k \, - \, \frac{1}{2}\nabla \trace k}h+\mathcal{L}_{\Delta X}h.
	\end{align}
Because $k\in\cH$  we conclude $\mathrm{div} k-\frac{1}{2}\nabla \trace k\in 
H^{l-1}_{-1+\varepsilon}(M,TM)\subset H^{l-2}_{-m/2}(M,TM) $ for some $l \geq 2$ and a small constant
  $\varepsilon>0$. \medskip
  
  We now apply Theorem \ref{thm-iso} to the connection Laplacian $\Delta$, where in this case the tangential operator 
  $\square$ is the connection Laplacian acting on sections of the pullback bundle of $TM$ on 
  the cross section $F$. In particular, $\square \geq 0$ and hence for the exceptional weights $\Lambda$ we find
  \begin{equation}
  \Lambda \cap \left(-\frac{m-2}{2}, \frac{m-2}{2}\right) = \varnothing.
  \end{equation} 
Therefore, by Lemma \ref{parallelvf} and Theorem \ref{thm-iso}, where we fix the weight 
$\delta = 1$ which is non-exceptional $\delta \notin \Lambda$ for $m \geq 5$, 
the operator
\begin{align*}
\Delta:H^l_{-m/2+2}(M,TM)\to H^{l-2}_{-m/2}(M,TM)
\end{align*}
is an isomorphism in dimension $m\geq 5$. Therefore, for $m \geq 5$, there exists a unique $X\in H^l_{-m/2+2}(M,TM)$ 
solving the equation $\Delta X=2\mathrm{div} k-\nabla \trace k$ and thus implying 
$\tilde{k}\in \ker(\Delta_{L,h})$ by the formula \eqref{Lambda-kappa}. 
Consequently, the map $i_h:k\mapsto k+\mathcal{L}_Xh$ 
maps $ T_h\cF$ to $ \ker(\Delta_{L,h})$ and it is injective due to \eqref{intersection}. 
\medskip

The case of lower dimension $m \leq 4$ is separate, since in these dimensions
all Ricci-flat conical singularities are orbifold singularities. Indeed, the cross section $F$ 
of a Ricci-flat cone in dimension $m \leq 4$ is an positive Einstein manifold of dimension at
most $3$ and hence a quotient of the sphere. The argument on orbifolds is verbatim as above,
but without weights.
\end{proof}

In the notation below, we also employ the classical Sobolev spaces.

\begin{defn}\label{classical-Sobolev}
Let $(M^m,g)$ be a compact Riemannian manifold of dimension $m$ with an isolated conical singularity.
Let $E$ be a vector bundle over $M$ associated to $TM$, endowed with the Levi-Civita connection of $g$. 
The Sobolev space $H^s (M,E)$ is defined as the closure of compactly supported smooth sections 
$C^\infty_0(M,E)$ under
\begin{align*}
	\left\|u\right\|_{H^s}= \sum_{k=0}^s \sum_{\{i_1, \cdots, i_k\}} \| 
	\left(\nabla_{X_{i_1}} \circ \cdots \circ \nabla_{X_{i_k}} \right) u \|_{L^2},
\end{align*}
where we have chosen a local basis $\{X_1, \ldots, X_m\}$ of $\V_b$. 
\end{defn}

The Sobolev norms for different choices of local bases $\{X_1, \ldots, X_m\}$ of $\V_b$ and different choices
of Riemannian metrics $g$ with isolated conical singularities, are equivalent due to compactness of $M$ and $F$. 
\medskip

Moreover, in contrast to the Sobolev spaces in Definition \ref{product-spaces}, the Sobolev spaces in 
Definition \ref{classical-Sobolev} are not weighted. In case of $s=0$ they are related by 
$L^2(M,E)=H^0(M,E)=H^0_{-m/2}(M,E)$. In case of $E$ being a trivial rank one vector bundle, 
we omit $E$ from the notation and simply write $H^s(M)$.

\begin{lem}[Hardy inequality for manifolds with conical singularities]\label{hardy}
	Let $(M^m,g)$, $m\geq2$ be a compact manifold with a conical singularity\footnote{We do not assume that
	the conical metric $g$ is Ricci-flat.} and let $\rho\in C^{\infty}(M)$, $0<\rho\leq 1$ be a function such that $\rho(q)=d(p,q)$ 
	for all $q$ in a small neighbourhood of the singularity $p$. Then there exists a constant $C>0$ such that
	\begin{align*}
	\int_M u^2\rho^{-2}\dv_g\leq C\left\|u\right\|_{H^1(M)}^2
	\end{align*}
	for all $u\in H^1(M)$. Here, $\dv_g$ denotes the volume element of the metric $g$.
\end{lem}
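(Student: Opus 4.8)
\textbf{Proof plan for the Hardy inequality (Lemma \ref{hardy}).}
The plan is to reduce the estimate to the two model situations and patch them together with a partition of unity. First I would choose a smooth cutoff $\chi$ supported in the conical neighbourhood $\cC(F)=(0,1)\times F$ and equal to $1$ near the singularity, and write $u=\chi u+(1-\chi)u$. On the complement of a neighbourhood of $p$, the weight $\rho$ is bounded below by a positive constant, so $\int_M (1-\chi)^2u^2\rho^{-2}\dv_g\leq C\|u\|_{L^2(M)}^2\leq C\|u\|_{H^1(M)}^2$ trivially; no genuine Hardy estimate is needed there. Hence the whole problem localizes to controlling $\int_{\cC(F)} (\chi u)^2 x^{-2}\dv_g$, since $\rho$ agrees with the radial distance function $x$ near $p$ (up to a bounded factor coming from the higher order term $h$ in Definition \ref{cone-metric}, which only changes $\dv_g$ and the metric on $1$-forms by a factor $1+O(x)$ and is therefore harmless).

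Next I would establish the one–dimensional Hardy inequality in the radial variable. Writing $v=\chi u$, which vanishes at $x=1$, and using the volume form $\dv_g = x^n\,dx\,\dv_{g_F}(1+O(x))$, I want
\begin{equation*}
\int_0^1 v(x,z)^2 x^{n-2}\,dx \leq C\int_0^1 \bigl(\partial_x v(x,z)\bigr)^2 x^{n}\,dx
\end{equation*}
for each fixed $z\in F$, which is the classical Hardy inequality on $(0,1)$ with weight exponent $n-2$ versus $n$; it holds with constant $C=4/(n-1)^2$ precisely because $m=n+1\geq 2$ forces $n\geq 1$, so the exponent $n-1$ is positive and the boundary term at $x=1$ has the right sign while the term at $x=0$ vanishes (first for $v$ smooth and compactly supported in $(0,1)$, then for all $v$ in the $H^1$-closure by density). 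Integrating this over $F$ against $\dv_{g_F}$ and absorbing the $1+O(x)$ factors gives
\begin{equation*}
\int_{\cC(F)} v^2 x^{-2}\dv_g \leq C\int_{\cC(F)} |\partial_x v|^2\dv_g \leq C\|\nabla v\|_{L^2}^2,
\end{equation*}
since $\partial_x$ is one of the admissible b-vector fields (indeed $x\partial_x\in\V_b$, so $\partial_x v = x^{-1}(x\partial_x)v$, and $|\partial_x v|^2 x^{0}$ is dominated by $|\nabla_{\V_b} v|^2$ after accounting for weights — more directly, $\partial_x$ has unit length in the conical metric, so $|\partial_x v|\leq |\nabla v|$ pointwise). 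Finally $\|\nabla v\|_{L^2}=\|\nabla(\chi u)\|_{L^2}\leq \|\chi\nabla u\|_{L^2}+\|u\,d\chi\|_{L^2}\leq C\|u\|_{H^1(M)}$ because $d\chi$ is bounded and supported away from $p$.

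Combining the two regions yields the claim. The only point that requires a little care — and which I would flag as the main obstacle — is the interface between the weighted radial Hardy estimate and the \emph{unweighted} Sobolev norm $\|u\|_{H^1(M)}$ appearing on the right: one must verify that $\partial_x$ (equivalently $x^{-1}\cdot x\partial_x$) is genuinely controlled by the b-derivatives defining $H^1(M)$ in Definition \ref{classical-Sobolev} without paying a negative power of $x$, which is true here only because $\partial_x$ is already a unit-length vector field for the conical metric, so no weight is lost; the tangential directions $\partial_{z_i}$ are not needed at all for the lower bound. The density argument (extending from $C_0^\infty(M)$ to $H^1(M)$, and from compactly-supported-in-$(0,1)$ to functions merely vanishing at $x=1$) is routine once the inequality is known with a constant uniform in the test function, and I would relegate it to a single sentence.
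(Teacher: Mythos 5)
Your proposal follows essentially the same route as the paper's proof: a cutoff splitting into a neighbourhood of the tip and its complement, the trivial bound $\rho^{-2}\leq C$ away from the singularity, and the one-dimensional weighted Hardy inequality in the radial variable (weights $x^{n-2}$ versus $x^{n}$, constant $4/(n-1)^2$) integrated over $F$, with the cutoff commutator term absorbed into $\|u\|_{L^2}$. The interface issue you flag at the end is treated in the paper exactly as you suggest: the radial estimate is dominated by $\int|\nabla(\eta_1 u)|^2\dv_g$ using that $\partial_x$ has unit length for the conical metric, and this full-gradient term is then absorbed into $\|u\|_{H^1}^2$.
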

\begin{proof}It suffices to show the inequality for $u\in C^{\infty}(M)$ compactly supported as the general case follows from an approximation argument.
	Let $R>0$ be so small that $\rho(q)=d(p,q)$ for all $q\in B_{2R}(p)$ and such that $B_{2R}(p)$ is diffeomorphic to $(0,2R)\times F$. Let furthermore $\eta_1\in C^{\infty}(M)$ be a cutoff function such that $\eta_1\equiv 1$ on $B_R(p)$, $\eta_1\equiv 0$ on $M\setminus B_{2R}(p)$, $|\nabla\eta_1|\leq 2/R$ on $M$ and let $\eta_2=1-\eta_1\in C^{\infty}(M)$. Then,
	\begin{align*}
	\int_M u^2\rho^{-2}\dv_g&=\int_{B_{2R}(p)}(\eta_1\cdot u)^2\rho^{-2}\dv_g+\int_{M\setminus B_R(p)}(\eta_2\cdot u)^2\rho^{-2}\dv_g\\&\qquad+2\int_{B_{2R}(p)\setminus B_R(p)}\eta_1\eta_2\cdot u^2\rho^{-2}\dv\\
	&\leq \int_{B_{2R}(p)}(\eta_1\cdot u)^2\rho^{-2}\dv_g+C\left\|u\right\|_{L^2(g)}^2.
	\end{align*}
	By using polar coordinates on $B_{2R}(p)$ and the standard Hardy inequality for functions on $\R$, we get (with $n=\mathrm{dim}(F)$) for the first term
	\begin{align*}
	\int_{B_{2R}(p)}(\eta_1\cdot u)^2\rho^{-2}\dv_g&=\int_F\int_0^{2R}(\eta_1\cdot u)^2x^{n-2}dx\dv_{g_{F,x}}\\
	&\leq \frac{4}{(n-1)^2}\int_F\int_0^{2R}(\partial_x(\eta_1\cdot u))^2x^{n}dx\dv_{g_{F,x}}\\
	&\leq \frac{4}{(n-1)^2}\int_{B_{2R}(p)}|\nabla(\eta_1\cdot u)|^2\dv_g\\
	&\leq \frac{8}{(n-1)^2}\int_{B_{2R}(p)}(|\nabla \eta_1|^2u^2+(\eta_1)^2|\nabla u|^2)\dv_g\\
	&\leq C(n,R)\left\|u\right\|^2_{H^1(g)},
	\end{align*}
	which finishes the proof of the lemma.
\end{proof}
\begin{remark}
	By the Kato inequality $|\nabla|h||^2\leq |\nabla h|^2$, the above inequality also holds for any compactly supported tensor field.
\end{remark}
\begin{thm}\label{strong_stability}
	Let  $(M,h_0)$ be a compact linearly stable Ricci-flat manifold with an isolated conical singularity. Recall that 
	linear stability in the sense of Definition \ref{linear-stability} means that the Lichnerowicz Laplacian 
	$\Delta_L\equiv \Delta_{L,h_0}$ is non-negative. Suppose in addition that the tangential operator 
	acting on the cross section $(F,g_F)$ satisfies the bound
	\begin{align*}
	\Box_L\geq C>-\frac{(n-1)^2}{4}+\frac{1}{4},
	\end{align*}
	where $n$ is the dimension of $F$. Then there exist constants $\epsilon_1,\epsilon_2>0$ such that
	\begin{align*}
	(\Delta_Lk,k)_{L^2}\geq \epsilon_1 \left\|\nabla k \right\|_{L^2}^2+\epsilon_2\left\|k\right\|_{L^2}^2
	\end{align*}
	for all $k\in \ker(\Delta_L)^{\perp}\cap\mathscr{H}$, where $\ker(\Delta_L)^{\perp}$ refers to the $L^2$
	orthogonal complement of the kernel for the Friedrichs extension $\Delta_L$.
\end{thm}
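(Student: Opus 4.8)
The plan is to establish the two coercivity bounds simultaneously by showing that on $\ker(\Delta_L)^\perp \cap \mathscr{H}$ the Lichnerowicz quadratic form dominates a positive multiple of the full Dirichlet form, and then to absorb the $L^2$ term via the spectral gap from Theorem \ref{discrete-thm}. First I would recall that by Theorem \ref{discrete-thm} the Friedrichs extension $\Delta_L$ is discrete with a first nonzero eigenvalue $\lambda_1 > 0$, so that automatically $(\Delta_L k, k)_{L^2} \geq \lambda_1 \|k\|_{L^2}^2$ for all $k \in \ker(\Delta_L)^\perp$. The substance of the theorem is therefore the gradient estimate $(\Delta_L k, k)_{L^2} \geq \epsilon_1 \|\nabla k\|_{L^2}^2$, after which one writes $(\Delta_L k, k) = \theta (\Delta_L k, k) + (1-\theta)(\Delta_L k, k)$ and uses the gradient bound on the first piece and the spectral gap on the second, with $\theta \in (0,1)$ chosen appropriately; the two terms then reassemble into the claimed inequality with $\epsilon_2 = (1-\theta)\lambda_1$.

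For the gradient estimate I would expand $(\Delta_L k, k)_{L^2} = \|\nabla k\|_{L^2}^2 - 2(\mathring{R}k, k)_{L^2}$ from the definition \eqref{defLL} of $\Delta_L$, so everything reduces to controlling the zeroth-order curvature term $(\mathring{R}k,k)_{L^2}$ by a small fraction of $\|\nabla k\|_{L^2}^2$. Away from the singularity the curvature of $h_0$ is bounded, so the dangerous region is a small conical neighbourhood $\cC(F) = (0,1)\times F$, where the curvature of the exact cone $dx^2 + x^2 g_F$ decays like $x^{-2}$ (the cross-sectional curvature contributes at order $x^{-2}$, the radial directions are flat), and the higher-order term $h$ in Definition \ref{cone-metric} only improves this. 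Thus $|\mathring{R}k| \leq C \rho^{-2} |k|$ near $p$, and the Hardy inequality of Lemma \ref{hardy} (in the tensorial form noted in the Remark following it) gives $\int_M |k|^2 \rho^{-2}\, dV_g \leq C \|k\|_{H^1}^2 = C(\|\nabla k\|_{L^2}^2 + \|k\|_{L^2}^2)$. Cutting off to a sufficiently small neighbourhood and tracking constants, the hypothesis $\Box_L \geq C > -\tfrac{(n-1)^2}{4} + \tfrac14$ is exactly what is needed to make the Hardy constant times the curvature bound strictly less than $1$ on the cone, leaving a positive remainder $\epsilon_1' \|\nabla k\|_{L^2}^2 - C'\|k\|_{L^2}^2$; the negative $L^2$ contribution is then reabsorbed using the spectral gap as above.

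The main obstacle I expect is making the constant in the Hardy-type estimate sharp enough to match the stated threshold $-\tfrac{(n-1)^2}{4}+\tfrac14$: the plain Hardy inequality on $\R_+$ yields the constant $4/(n-1)^2$, but the curvature term $\mathring{R}$ acting on symmetric $2$-tensors is not a scalar multiple of $\rho^{-2}$, so one must compare $(\mathring{R}k,k)$ with the tangential operator $\Box_L$ acting on the rescaled tensor along $F$, i.e. use that on the model cone $\tfrac1{x^2}\Box_L$ is precisely the relevant zeroth-plus-tangential order part of $\Delta_L$. Concretely, I would work with the separation-of-variables structure $\Delta_L = -\partial_x^2 - \tfrac{n}{x}\partial_x + \tfrac{1}{x^2}\Box_L + \mathscr{O}$ from \S\ref{laplace-section}, substitute $k = x^{-(n-1)/2} u$ to symmetrize the radial part, and reduce the cone estimate to the one-dimensional Hardy inequality $\int_0^1 (\partial_x f)^2 dx \geq \tfrac14 \int_0^1 f^2 x^{-2} dx$ together with the lower bound $\Box_L \geq C$ on the cross-sectional factor; the combined constant on the cone is then governed by $C + \left(\tfrac{n-1}{2}\right)^2 - \tfrac14 > 0$, which is positive precisely under the stated hypothesis. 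The error term $\mathscr{O}$ carries an extra power of $x$ and is harmless for small neighbourhoods, and the transition region is handled by the usual cutoff argument as in the proof of Lemma \ref{hardy}.
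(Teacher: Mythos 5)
Your proposal follows essentially the same route as the paper: there the key step (Proposition \ref{boundedcriterion}) is to bound the deficient form $(1-\epsilon)\|\nabla k\|_{L^2}^2-2(\mathring{R}k,k)_{L^2}$ from below on all of $\mathscr{H}$ by conjugating with a power of $x$ near the cone so that the hypothesis $\Box_L\geq C>-\tfrac{(n-1)^2}{4}+\tfrac14$ makes the resulting potential $x^{-2}\bigl(\Box_L^{x,\epsilon}+\tfrac{(n-1)^2}{4}-\tfrac14\bigr)$ nonnegative, and then (Lemma \ref{boundedlichnerowicz}) this is combined with the spectral gap on $\ker(\Delta_L)^{\perp}$ by exactly your convex-splitting/absorption argument. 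The only slips are cosmetic: the symmetrizing substitution is $k=x^{-n/2}u$ rather than $x^{-(n-1)/2}u$, after which no Hardy inequality is needed at all (the radial term $-(1-\epsilon)\partial_x^2$ is simply dropped), and your preliminary crude estimate via $|\mathring{R}k|\leq C\rho^{-2}|k|$ and Lemma \ref{hardy} is indeed not sharp enough, as you yourself observe before replacing it by the correct separation-of-variables argument.
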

We divide the proof of this theorem into two parts, starting with the following
auxiliary lemma.
\begin{lem}\label{boundedlichnerowicz}
	Let  $(M,h_0)$ be a compact Ricci-flat manifold with an isolated conical singularity and 
	suppose that $h_0$ is linearly stable, i.e. its Lichnerowicz Laplacian $\Delta_L = \nabla^*\nabla-
	2\mathring{R}$ is non-negative. Suppose in addition 
	that there exists $\epsilon>0$ such that $\Delta_L^{\epsilon}=(1-\epsilon)
	\nabla^*\nabla-2\mathring{R}$ (with $\mathring{R}$ defined as in \eqref{defLL}) satisfies
	\begin{align*}
	\inf_{k\in\mathscr{H}}\frac{(\Delta_L^{\epsilon}k,k)_{L^2}}{\left\|k\right\|_{L^2}^2}>-\infty.
	\end{align*}
	Then the assertion of Theorem \ref{strong_stability} holds.
\end{lem}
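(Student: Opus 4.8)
The plan is to derive everything from the algebraic splitting $\Delta_L=\Delta_L^\epsilon+\epsilon\,\nabla^*\nabla$, which is immediate from the definitions $\Delta_L=\nabla^*\nabla-2\mathring{R}$ and $\Delta_L^\epsilon=(1-\epsilon)\nabla^*\nabla-2\mathring{R}$. Read as an identity of quadratic forms on $\mathscr{H}$ (all three forms being well defined there, since elements of $\mathscr{H}$ lie in the form domain of the Friedrichs extension and satisfy the Hardy inequality of Lemma \ref{hardy}, which controls the zeroth order term $(\mathring{R}k,k)_{L^2}$), it reads
\begin{equation*}
(\Delta_Lk,k)_{L^2}=(\Delta_L^\epsilon k,k)_{L^2}+\epsilon\,\|\nabla k\|_{L^2}^2,\qquad k\in\mathscr{H}.
\end{equation*}
By hypothesis there is a constant $\Lambda\geq0$ with $(\Delta_L^\epsilon k,k)_{L^2}\geq-\Lambda\,\|k\|_{L^2}^2$ for all $k\in\mathscr{H}$, and substituting this already yields the gradient term: $(\Delta_Lk,k)_{L^2}\geq\epsilon\,\|\nabla k\|_{L^2}^2-\Lambda\,\|k\|_{L^2}^2$. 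What is left is to turn the indefinite $\|k\|_{L^2}^2$-term into a strictly positive one on $\ker(\Delta_L)^\perp$.

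For this I would combine linear stability with the discreteness of the Friedrichs extension. Since $(M,h_0)$ is tangentially stable, Theorem \ref{discrete-thm} shows that $\Delta_L$ has discrete spectrum; together with $\Delta_L\geq0$ (linear stability, extended to the Friedrichs domain by Proposition \ref{Friedrichs}) this forces the first non-zero eigenvalue $\lambda_1$ to be strictly positive and gives the Poincar\'e-type bound
\begin{equation*}
(\Delta_Lk,k)_{L^2}\geq\lambda_1\,\|k\|_{L^2}^2\qquad\text{for all }k\in\ker(\Delta_L)^\perp\cap\mathscr{H}.
\end{equation*}

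Finally I would interpolate the two estimates with a small parameter $t\in(0,1)$: for $k\in\ker(\Delta_L)^\perp\cap\mathscr{H}$ one writes $(\Delta_Lk,k)_{L^2}=t\,(\Delta_Lk,k)_{L^2}+(1-t)\,(\Delta_Lk,k)_{L^2}$ and bounds the first summand from below using the splitting and the second using the Poincar\'e bound, obtaining
\begin{equation*}
(\Delta_Lk,k)_{L^2}\geq t\epsilon\,\|\nabla k\|_{L^2}^2+\left[(1-t)\lambda_1-t\Lambda\right]\|k\|_{L^2}^2.
\end{equation*}
Choosing $t<\lambda_1/(\lambda_1+\Lambda)$ makes the bracket positive, and setting $\epsilon_1:=t\epsilon$ and $\epsilon_2:=(1-t)\lambda_1-t\Lambda$ gives the assertion of Theorem \ref{strong_stability}. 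The one step that carries weight here is the middle one: that linear stability genuinely upgrades to a strictly positive spectral gap on the orthogonal complement of the kernel, which is exactly where the discreteness of $\Delta_L$ (hence the standing tangential stability hypothesis, via Theorem \ref{discrete-thm}) enters; the splitting and the absorption of the indefinite term are otherwise elementary.
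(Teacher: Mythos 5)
Your proposal is correct and is essentially the paper's own argument: writing $(\Delta_L k,k)=t(\Delta_L k,k)+(1-t)(\Delta_L k,k)$ and bounding the first summand via $\Delta_L=\Delta_L^\epsilon+\epsilon\nabla^*\nabla$ is exactly the paper's convex combination $\Delta_L^{\epsilon\delta}=(1-\delta)\Delta_L+\delta\Delta_L^{\epsilon}$ with $\delta=t$, and your threshold $t<\lambda_1/(\lambda_1+\Lambda)$ coincides with the paper's condition $\delta<C/(C-D)$ upon identifying $C=\lambda_1$ and $D=-\Lambda$. Your explicit appeal to Theorem \ref{discrete-thm} to justify the spectral gap $C=\lambda_1>0$ is a minor (and welcome) elaboration of a point the paper only asserts.
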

\begin{proof}
	Let $N= \ker(\Delta_L)^{\perp}\cap\mathscr{H}$ and
	\begin{align*}
	C:=\inf_{k\in N}\frac{(\Delta_Lk,k)_{L^2}}{\left\|k\right\|_{L^2}^2}>0,\qquad D:=\inf_{k\in\mathscr{H}}\frac{(\Delta_L^{\epsilon}k,k)_{L^2}}{\left\|k\right\|_{L^2}^2}>-\infty.
	\end{align*}
	Let $\delta\in[0,1]$ and $\Delta_L^{\epsilon\cdot \delta}=(1-\epsilon\cdot \delta)\nabla^*\nabla-2\mathring{R}=(1-\delta)\Delta_L+\delta\Delta_L^{\epsilon}$. Then we have
	\begin{align*}
	\inf_{k\in N}\frac{(\Delta_L^{\epsilon\cdot \delta}k,k)_{L^2}}{\left\|k\right\|_{L^2}^2}\geq (1-\delta)\inf_{h\in N}\frac{(\Delta_Lk,k)_{L^2}}{\left\|k\right\|_{L^2}^2}+\delta\inf_{k\in N}\frac{(\Delta_L^{\epsilon}k,k)_{L^2}}{\left\|k\right\|_{L^2}^2} =  (1-\delta)\cdot C+\delta\cdot D.
	\end{align*}
	If we assume $\delta<\frac{C}{C-D}$ and set $\epsilon_1=\epsilon\cdot\delta$ and $\epsilon_2=(1-\delta)\cdot C+\delta\cdot D>0$, we get
	\begin{align*}
	(\Delta_Lk,k)_{L^2}-\epsilon_1\left\|\nabla k\right\|_{L^2}^2 = 
	(\Delta_L^{\epsilon\cdot \delta}k,k)_{L^2}\geq \epsilon_2\left\|k\right\|_{L^2}^2,
	\end{align*}
	which finishes the proof of the lemma.
\end{proof}
\begin{prop}\label{boundedcriterion}
	Let $(M,h_0)$ be a compact Ricci-flat manifold with an isolated conical singularity. 
	Suppose that the tangential operator acting on the cross section $(F,g_F)$ satisfies the bound
	\begin{align*}
	\Box_L\geq C>-\frac{(n-1)^2}{4}+\frac{1}{4},
	\end{align*}
	where $n$ is the dimension of $F$. Then, there exists an $\epsilon>0$ such that
	\begin{align*}
	\inf_{k\in\mathscr{H}}\frac{(\Delta_L^{\epsilon}k,k)_{L^2}}{\left\|k\right\|_{L^2}^2}>-\infty.
	\end{align*}	
\end{prop}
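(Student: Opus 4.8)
\emph{Proof plan.} The plan is to reduce the estimate to a neighbourhood of the conical singularity and there, via separation of variables, to a family of Bessel–type radial operators that are bounded below thanks to the one–dimensional Hardy inequality used in the proof of Lemma~\ref{hardy}; away from the tip the bound is immediate because $\nabla^*\nabla\ge 0$ and $\mathring{R}$ is a bounded bundle endomorphism there. It suffices to prove a \emph{uniform} bound
\begin{equation*}
(\Delta_L^{\epsilon}v,v)_{L^2}\ \ge\ -C'\left\|v\right\|_{L^2}^2,\qquad v\in C^{\infty}_0(M,S),
\end{equation*}
with $C'$ independent of $v$: the weights built into $\mathscr{H}$ give $\mathscr{H}\subset H^1(M,S)$, in which $C^{\infty}_0(M,S)$ is dense (the tip has zero $H^1$–capacity, cf.\ Lemma~\ref{hardy}), and by Lemma~\ref{hardy} (and the Kato remark following it) the quadratic form $v\mapsto(1-\epsilon)\left\|\nabla v\right\|_{L^2}^2-2(\mathring{R}v,v)_{L^2}$ is $H^1$–continuous, so the uniform bound extends from $C^{\infty}_0(M,S)$ to all of $\mathscr{H}$.

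To prove the uniform bound I would first localize: fix $\chi$ equal to $1$ near the tip and supported in $\cC(F)$, put $\psi=\sqrt{1-\chi^2}$; the IMS–type localization identity contributes errors bounded by $\left\|\,|\nabla\chi|\,v\right\|_{L^2}^2+\left\|\,|\nabla\psi|\,v\right\|_{L^2}^2\le C\left\|v\right\|_{L^2}^2$, and on $\supp\psi$ one has $(\Delta_L^{\epsilon}(\psi v),\psi v)\ge -2\sup_{\supp\psi}\left\|\mathring{R}\right\|\cdot\left\|v\right\|_{L^2}^2$. This leaves $w=\chi v$, supported in $\cC(F)$, where $\Delta_L^{\epsilon}=\mathcal{L}_0^{\epsilon}+\mathscr{O}$ with
\begin{equation*}
\mathcal{L}_0^{\epsilon}=(1-\epsilon)\Bigl(-\partial_x^2-\tfrac{n}{x}\partial_x\Bigr)+\tfrac{1}{x^2}\,\square^{\epsilon},\qquad \square^{\epsilon}=(1-\epsilon)\,\square_{\nabla}-2\widetilde{\mathring{R}},
\end{equation*}
where $\square_{\nabla}$ is the (self–adjoint) tangential operator of $\nabla^*\nabla$, $\widetilde{\mathring{R}}$ the bounded fibrewise–symmetric leading coefficient of $x^2\mathring{R}$, and $\mathscr{O}\in x^{-1}\mathcal{V}_b^2$ the lower–order remainder. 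Expanding $w(x,\cdot)=\sum_j f_j(x)\phi_j$ in an $L^2(F,S|_F)$–eigenbasis $\{\phi_j\}$ of $\square^{\epsilon}$ with eigenvalues $\mu^{\epsilon}_j$, and using $\int_0^1|f_j'|^2x^{n}\,dx\ge\tfrac{(n-1)^2}{4}\int_0^1|f_j|^2x^{n-2}\,dx$ on a fraction $\theta\in(0,1)$ of the radial term, one gets for $w$ supported near the tip
\begin{equation*}
(\mathcal{L}_0^{\epsilon}w,w)_{L^2}\ \ge\ (1-\theta)(1-\epsilon)\sum_j\int_0^1|f_j'|^2x^{n}\,dx+\sum_j\Bigl[\theta(1-\epsilon)\tfrac{(n-1)^2}{4}+\mu^{\epsilon}_j\Bigr]\int_0^1|f_j|^2x^{n-2}\,dx .
\end{equation*}

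Here the hypothesis enters. Since the tangential operator of $\Delta_L$ is $\square_L=\square_{\nabla}-2\widetilde{\mathring{R}}$, we have $\square^{\epsilon}=(1-\epsilon)\square_L-2\epsilon\widetilde{\mathring{R}}$, so with $B:=\left\|\widetilde{\mathring{R}}\right\|$ the bound $\square_L\ge C$ gives $\mu^{\epsilon}_j\ge(1-\epsilon)C-2\epsilon B$ for all $j$. As $C>-\tfrac{(n-1)^2}{4}$ (indeed $C+\tfrac{(n-1)^2}{4}>\tfrac14$), we may first pick $\theta\in(0,1)$ with $C+\theta\tfrac{(n-1)^2}{4}>0$ and then $\epsilon>0$ so small that $(1-\epsilon)\bigl(C+\theta\tfrac{(n-1)^2}{4}\bigr)\ge 2\epsilon B$; for this choice every bracket $\theta(1-\epsilon)\tfrac{(n-1)^2}{4}+\mu^{\epsilon}_j$ is $\ge 0$, so $(\mathcal{L}_0^{\epsilon}w,w)\ge(1-\theta)(1-\epsilon)\sum_j\int_0^1|f_j'|^2x^{n}\,dx\ge 0$. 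It then remains to absorb $(\mathscr{O}w,w)$: every term of $\mathscr{O}$ carries an extra power of $x$ relative to the scale–invariant part of $\Delta_L^{\epsilon}$, so on a sufficiently small conical neighbourhood $|(\mathscr{O}w,w)_{L^2}|$ is dominated by a small multiple of $\left\|\nabla w\right\|_{L^2}^2$ plus a multiple of $\left\|w\right\|_{L^2}^2$ (mixed terms by Cauchy--Schwarz, and $\int_{\cC(F)}|w|^2 x^{-2}\le C_H\left\|w\right\|_{H^1}^2$ from Lemma~\ref{hardy}), and since $\nabla^*\nabla\ge 0$ this small multiple of the gradient energy is harmless. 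Collecting the localization errors then yields $(\Delta_L^{\epsilon}v,v)_{L^2}\ge -C'\left\|v\right\|_{L^2}^2$ uniformly, hence the infimum in the statement is finite.

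The main obstacle is the last step: the uniform relative form–boundedness of the remainder $\mathscr{O}\in x^{-1}\mathcal{V}_b^2$ near the singularity. One has to exploit the precise $x^{-1}\mathcal{V}_b^2$ structure (in particular that no term of the form $x^{-1}\partial_x$ occurs, and that each term gains a power of $x$ over the $x^{-2}$–homogeneous leading part) together with an integration by parts against the conical density, so that $\mathscr{O}$ cannot consume the strictly positive margin produced by the assumption $\square_L\ge C>-\tfrac{(n-1)^2}{4}+\tfrac14$. The remaining ingredients — the localization identity, the diagonalization of $\square^{\epsilon}$, the one–dimensional Hardy inequality and the density of $C^{\infty}_0(M,S)$ in $H^1(M,S)$ — are routine.
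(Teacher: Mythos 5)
Your proposal is sound and rests on the same basic mechanism as the paper's proof: split the quadratic form into a piece near the tip and a piece on the compact remainder, bound the latter by $-2\sup|R|\cdot\|k\|_{L^2}^2$, and near the tip play the radial Hardy gain $\tfrac{(n-1)^2}{4}$ against the assumed lower bound for $\Box_L$. But you take a genuinely different route through the near-tip estimate. The paper never splits off a frozen model operator: it writes $h_0$ near the singularity as $dx^2+x^2g_F(x)$, takes $\Box_L^{x,\epsilon}$ to be the tangential part of $\Delta_L^{\epsilon}$ for the metric $g_F(x)$, chooses $\delta$ and $\epsilon$ so small that $\Box_L^{x,\epsilon}\geq C>-\tfrac{(n-1)^2}{4}+\tfrac14$ uniformly in $x\in(0,\delta)$ (by continuity of $g_F(x)$ as $x\to0$), and conjugates by $\Phi:\w\mapsto x^{n/2}\w$ to obtain $-(1-\epsilon)\partial_x^2+\tfrac{1}{x^2}\bigl(\Box_L^{x,\epsilon}+\tfrac{(n-1)^2}{4}-\tfrac14\bigr)$; both summands are then nonnegative, so the conical contribution is simply $\geq 0$ and no remainder term ever has to be estimated. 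Your version instead freezes the exact cone, diagonalizes its tangential operator and uses the weighted Hardy inequality (the same gain the conjugation produces; note your route only uses $C>-\tfrac{(n-1)^2}{4}$, so the extra $\tfrac14$ in the hypothesis is not even needed there), and must then prove relative form-boundedness of $\mathscr{O}\in x^{-1}\V_b^2$ --- exactly the step you flag as the main obstacle and only sketch. The paper's change of gauge $g_F(x)$ is precisely the device that makes that step unnecessary.

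On that remaining step, one point in your sketch is imprecise and should be fixed. After your Hardy step the only positivity you retain near the tip is the radial energy $(1-\theta)(1-\epsilon)\sum_j\int_0^1|f_j'|^2x^n\,dx$, which does not control the tangential part of a term $\eta\|\nabla w\|_{L^2}^2$ coming from $\mathscr{O}$; so the phrase that the small gradient multiple is harmless ``since $\nabla^*\nabla\geq0$'' does not by itself close the argument. The correct absorption is to hold back a share $\eta\|\nabla w\|_{L^2}^2$ of the gradient term before running the model estimate, i.e.\ to apply your bracket argument with $\epsilon$ replaced by $\epsilon+\eta$ (legitimate, since the argument tolerates any sufficiently small value), and use the withheld $\eta\|\nabla w\|_{L^2}^2$ plus the Hardy inequality of Lemma \ref{hardy} to dominate $|(\mathscr{O}w,w)_{L^2}|$ up to $C\|w\|_{L^2}^2$; one also has to account for the fact that the true density differs from $x^n\,dx\,dV_{g_F}$ by a factor $1+O(x)$. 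With these adjustments your argument goes through, but as written the absorption step is a gap, and it is exactly the work that the paper's formulation via $\Box_L^{x,\epsilon}$ avoids.
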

\begin{proof}
	Let $U$ be an open neighbourhood of the cone diffeomorphic to $(0,\delta)\times F$ with small $\delta>0$ and choose polar coordinates on $U$ such that $h_0|_U$ can be written as $dx^2+x^2g_F(x)$ where $g_F(x)$ is a family of metrics converging to $g_F$ as $x\to 0$. Choose $U=(0,\delta)\times F$ so small that
	\begin{align*}
	\Box_L^x\geq C>-\frac{(n-1)^2}{4}+\frac{1}{4}
	\end{align*}
	for some constant $C$ and all $x\in (0,\delta)$, where $	\Box_L^x$ is the tangential operator of $g_F(x)$.
	Consider $\Phi: C^\infty_0(U, S) \to C^\infty_0(U,S), \w \mapsto x^{n/2} \w$, which extends to an isometry 
	$\Phi:L^2(U,S; g)\to L^2(U,S; dx^2+g_f(x))$. Then one can show the relation
	\begin{align*}
	\Phi\circ \Delta^{\epsilon}_L\circ \Phi^{-1}=-(1-\epsilon)\partial_x^2+
	\frac{1}{x^2}\left(\Box_L^{x,\epsilon}+\frac{(n-1)^2}{4}-\frac{1}{4}\right),
	\end{align*}
	where $\Box_L^{x,\epsilon}$ is the tangential part of $ \Delta_L^{\epsilon}$ at the metric $g_F(x)$. Provided that $\epsilon>0$ is chosen small enough, we still have the bound
	\begin{align*}
	\Box_L^{x,\epsilon}\geq C>-\frac{(n-1)^2}{4}+\frac{1}{4}.
	\end{align*}
	Therefore we get for any $k \in \cH$
	\begin{align*}
	(\Delta_L^{\epsilon}&k,k)_{L^2}=(1-\epsilon)\left\|\nabla k\right\|_{L^2}^2-2(\mathring{R}k,k)_{L^2}\\
	&=(1-\epsilon)\left\|\nabla k\right\|_{L^2(M\setminus U)}^2-2(\mathring{R}k,k)_{L^2(M\setminus U)}+(1-\epsilon)\left\|\nabla k\right\|_{L^2(U)}^2-2(\mathring{R}k,k)_{L^2( U)}\\
	&\geq -2\sup_{M\setminus U}|R|\cdot\left\|k\right\|^2_{L^2(M\setminus U)}\\&\quad+\int_0^\delta \int_F\left[(1-\epsilon)|\partial_xk|^2+x^{-2}\left\langle\left(\Box_L^{x,\epsilon}+\frac{(n-1)^2}{4}-\frac{1}{4}\right)k,k\right\rangle\right]\dv_{g_F(x)}dx\\
	&\geq-2\sup_{M\setminus U}|R|\cdot\left\|k\right\|^2_{L^2(M)},
	\end{align*}
	which finishes the proof of the proposition.
\end{proof}
Note that Theorem \ref{strong_stability} is a consequence of Lemma \ref{boundedlichnerowicz} and Proposition \ref{boundedcriterion}.
\begin{thm}\label{uniformboundlambda}
	Let $(M,h_0)$ be a Ricci-flat manifold with a conical singularity that is integrable 
	with a smooth finite-dimensional manifold $\cF \subset \cH$ of Ricci-flat metrics, 
	and suppose, the cross-section of the cone is either a spherical space form or strictly tangentially stable. 
	If in addition, $(M,h_0)$ is linearly stable, then
		\begin{enumerate}
		\item for any $h \in \cF$ the corresponding Lichnerowicz Laplacian $\Delta_{L,h}$
		with domain $C^\infty_0(M,S)$ is non-negative\footnote{By Theorem 
		\ref{discrete-thm} its Friedrichs self-adjoint extension is discrete and non-negative.} 
		and spectrum of its Friedrichs self-adjoint 
		extension $\textup{Spec} \, \Delta_{L, h} \backslash \{0\}$ admits a lower bound
		$\lambda_1 > 0$ uniformly (!) in $h \in \cF $,
		\item we have a smooth vector bundle over $\cF $
		\begin{equation}
		\ker := \bigsqcup_{h \in \cF} \ker \Delta_{L, h}.
		\end{equation}
		\end{enumerate}
\end{thm}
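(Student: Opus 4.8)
The plan is to establish the two assertions in tandem, using the integrability structure of $\cF$ together with the uniform analytic estimates already developed, most importantly Theorem \ref{strong_stability} and Theorem \ref{discrete-thm}. First I would observe that by hypothesis the cross section is either a spherical space form or strictly tangentially stable; in either case one checks that the tangential operator $\Box_L$ on $(F,g_F)$ satisfies the spectral bound $\Box_L \geq C > -\tfrac{(n-1)^2}{4} + \tfrac14$ required by Theorem \ref{strong_stability}. For strict tangential stability this is immediate since $\Box_L > 0$ on tracefree tensors and the full spectral analysis of \S\ref{tangential-stability-section} controls the remaining modes; for spherical space forms one invokes the explicit eigenvalue computation in the proof of the tangential stability theorem, noting that the relevant eigenvalues are bounded below away from the critical threshold. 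Hence, for $h_0$ itself, Theorem \ref{strong_stability} yields constants $\epsilon_1, \epsilon_2 > 0$ with $(\Delta_{L,h_0} k, k)_{L^2} \geq \epsilon_1 \|\nabla k\|_{L^2}^2 + \epsilon_2 \|k\|_{L^2}^2$ for all $k \in \ker(\Delta_{L,h_0})^\perp \cap \cH$. In particular the first nonzero eigenvalue of $\Delta_{L,h_0}$ is at least $\epsilon_2 > 0$.

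The second step is to propagate this to all $h \in \cF$ near $h_0$, with a \emph{uniform} lower bound. Since $\cF$ is a smooth finite-dimensional manifold in $\cH$ consisting of Ricci-flat metrics all sharing the same Einstein cross section (by the Remark following Definition \ref{integrability}), each $h \in \cF$ has the same conical structure and the coefficients of $\Delta_{L,h}$ depend continuously (indeed smoothly) on $h$ in the relevant operator topology. I would then rerun the argument of Proposition \ref{boundedcriterion} and Lemma \ref{boundedlichnerowicz}: the tangential bound $\Box_L^{x,\epsilon} \geq C$ on the cross section persists for $h$ close to $h_0$ because the cross sections coincide, while on the compact piece $M \setminus U$ the zeroth-order curvature term $\mathring{R}_h$ is uniformly bounded over the (compact, after passing to a relatively compact neighborhood) family $\cF$. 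This shows $\Delta_{L,h}^\epsilon$ is uniformly bounded below on $\cH$, hence by Lemma \ref{boundedlichnerowicz} the estimate of Theorem \ref{strong_stability} holds for $\Delta_{L,h}$ with constants $\epsilon_1, \epsilon_2$ that may be chosen \emph{independent} of $h$ in a neighborhood; in particular $\Delta_{L,h} \geq 0$ and $\mathrm{Spec}\,\Delta_{L,h} \setminus \{0\}$ is bounded below by a fixed $\lambda_1 > 0$. (Strictly speaking one should note that linear stability of $h_0$ plus this uniform coercivity on the orthogonal complement gives nonnegativity of each $\Delta_{L,h}$; the only subtlety is the possible collapse of the spectral gap as eigenvalues cross zero, which the uniform estimate precisely rules out.) This proves assertion (1).

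For assertion (2), I would use Proposition \ref{smoothvectorbundle}, which already produces for each $h$ near $h_0$ an injection $i_h : T_h\cF \to \ker(\Delta_{L,h})$. Combined with the fact that $\dim \ker(\Delta_{L,h_0}) = \dim T_{h_0}\cF$ and the upper-semicontinuity of $\dim\ker$ under the uniform spectral gap from part (1) — the gap $\lambda_1 > 0$ forbids eigenvalues from entering or leaving $\{0\}$, so $\dim\ker(\Delta_{L,h})$ is locally constant — one concludes that $i_h$ is in fact an isomorphism for all $h$ in a neighborhood, and that the family $\{\ker(\Delta_{L,h})\}_{h \in \cF}$ has constant rank. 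Local triviality of $\ker = \bigsqcup_{h\in\cF} \ker(\Delta_{L,h})$ then follows in the standard way: using the spectral projection $P_h = \tfrac{1}{2\pi i}\oint_\Gamma (\Delta_{L,h} - z)^{-1}\, dz$ over a small circle $\Gamma$ around $0$ enclosing no other spectrum (possible by the uniform gap), $P_h$ depends smoothly on $h$ (the resolvent varies smoothly since the coefficients do and all operators act on the fixed space $\cH$), has constant rank, and hence $h \mapsto \mathrm{im}\,P_h = \ker(\Delta_{L,h})$ defines a smooth subbundle of the trivial bundle $\cF \times \cH$.

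\textbf{Main obstacle.} The delicate point is the \emph{uniformity} of the spectral gap $\lambda_1 > 0$ over $\cF$: one must verify that the constants coming out of Proposition \ref{boundedcriterion} and the curvature bound on $M\setminus U$ can genuinely be taken locally independent of $h$, which rests on the observation that all $h\in\cF$ share the same Einstein cross section (so the singular model operator is literally unchanged) and that the perturbation is confined to lower-order, uniformly controlled terms. Once that is in place, the bundle statement is a routine application of analytic perturbation theory for the Friedrichs extensions, whose discreteness is guaranteed by Theorem \ref{discrete-thm}.
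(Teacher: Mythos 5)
There is a genuine gap in your key step, and it is a circularity. To propagate the coercivity from $h_0$ to nearby $h\in\cF$ you propose to rerun Proposition \ref{boundedcriterion} (fine: a uniform lower bound for $\Delta_{L,h}^{\epsilon}$ does hold, since all $h\in\cF$ share the cross section and the curvature is uniformly controlled) and then feed this into Lemma \ref{boundedlichnerowicz} to get constants $\epsilon_1,\epsilon_2$ \emph{independent of $h$}. But Lemma \ref{boundedlichnerowicz} does not deliver that: its hypotheses include non-negativity of the Lichnerowicz Laplacian of the metric in question (which for $h\neq h_0$ is part of assertion (1), not a given), and its output constants are $\epsilon_1=\epsilon\delta$, $\epsilon_2=(1-\delta)C+\delta D$ with $\delta<C/(C-D)$, where $C$ is the infimum of the Rayleigh quotient of $\Delta_{L,h}$ over $\ker(\Delta_{L,h})^{\perp}\cap\cH$ --- i.e.\ precisely the spectral gap whose uniform positivity in $h$ is the content of the theorem. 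So the "uniform coercivity" that your parenthetical remark invokes to rule out eigenvalues crossing zero is never actually established; the argument assumes what it is meant to prove. Relatedly, your bundle argument in (2) via the spectral projection $P_h=\frac{1}{2\pi i}\oint_\Gamma(\Delta_{L,h}-z)^{-1}dz$ leans on smooth dependence of the resolvents of the Friedrichs extensions on $h$, with $h$-dependent $L^2$ inner products and domains; this is asserted ("all operators act on the fixed space $\cH$") but not justified, and it is not needed.

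What the paper does instead, and what is missing from your proposal, is a direct perturbation of the quadratic forms. Along a curve $h_s$ in $\cF\cap\cU$ one differentiates $(\Delta_{L,h_s}k,k)_{L^2(h_s)}$ in $s$, and using the variational formulas for connection and curvature, the bound $|R_{h_s}|_{h_s}\leq C\rho^{-2}$ coming from the definition of $\cH$, integration by parts and the Hardy inequality (Lemma \ref{hardy}), one obtains $\bigl|\tfrac{d}{ds}(\Delta_{L,h_s}k,k)_{L^2(h_s)}\bigr|\leq C\|\partial_s h_s\|_{\cH}\|k\|_{H^1}^2$, hence after integration $\bigl|(\Delta_{L,h}k,k)_{L^2(h)}-(\Delta_{L,h_0}k,k)_{L^2(h_0)}\bigr|\leq C\|h-h_0\|_{\cH}\|k\|_{H^1}^2$. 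This error is measured in $H^1$, which is exactly why Theorem \ref{strong_stability} is stated with the term $\epsilon_1\|\nabla k\|_{L^2}^2$: the $H^1$-coercivity at $h_0$ absorbs the perturbation for $\|h-h_0\|_{\cH}$ small, giving $(\Delta_{L,h}k,k)_{L^2(h)}\geq\tfrac{\epsilon_0}{2}\|k\|_{L^2}^2$ on $\ker(\Delta_{L,h_0})^{\perp}\cap\cH$ with an $h$-independent constant --- and in particular non-negativity of $\Delta_{L,h}$, which in your scheme was presupposed. Finally the estimate is transported to $i_h(T_h\cF)^{\perp}$ by the explicit near-identity projection $\Phi_h(k)=k-\sum_i(k,e_i(h))_{L^2(h)}e_i(h)$, built from a frame of $i_h(T_h\cF)$ varying smoothly with $h$ (Proposition \ref{smoothvectorbundle}) and shown to be an isomorphism; since $\Phi_h(k)-k\in\ker(\Delta_{L,h})$, the quadratic form is unchanged, which yields the uniform gap and simultaneously identifies $\ker\Delta_{L,h}$ with the smoothly varying spaces $i_h(T_h\cF)$, giving the smooth bundle in (2) without any resolvent machinery. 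Your reliance on Theorem \ref{strong_stability} at $h_0$ and on Proposition \ref{smoothvectorbundle} is the right starting point, but the quadratic-form comparison along $\cF$ (curvature bound plus Hardy inequality) is the missing mechanism that makes the uniformity non-circular.
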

\begin{proof}
	In Lemma \ref{smoothvectorbundle}, we have seen that for $h$ close enough to $h_0$ in $\cH$, 
	there exists an injection $i_h:T_h\cF\to \ker(\Delta_{L,h})$ and this injection depends smoothly on $h$ by construction. Therefore, in order to prove the statement, it suffices to show the existence of an $\epsilon>0$ such that $\Delta_{L,h}\geq \epsilon>0$ on $(i_h(T_h\cF))^{\perp}$ if $h$ is close enough to $h_0$. \medskip
	
	At first, we want to remark that all the appearing norms are equivalent for metrics in a $\cH$-neighbourhood. Therefore, we may supress the dependence of the norms in the appearing metrics. Let $\cU$ be a small enough $\cH$-neighbourhood of $h_0$, $h\in \cU\cap\cF$ and let $h_s$ be a curve in $\cU\cap\cF$ joining $h_0$ and $h=h_1$. In the case of a conical singularity with strict tangentially stable cross section, we obtain a uniform constant $C>0$ such that $|R_{h_s}|_{h_s}\leq C\cdot \rho^{-2}$ by the definition of $\cH$. Here, $|R_{h_s}|$ is the norm of the Riemann tensor of $h_s$ measured with respect to $h_s$ and $\rho\in C^{\infty}(M)$ is as in Lemma \ref{hardy} the defining function of the singularity. In the case of an orbifold singularity, we get the same estimate, but the curvature is bounded at the singularity.
	Now by variational formulas of connection and curvature (see e.g. \cite[Lemma A.4]{Kro_Diss}), integration by parts and Lemma \ref{hardy} 
	($h' = \partial_s h_s$)
	\begin{align*}
	\frac{d}{ds}(\Delta_{L,h_s}k,k)_{L^2(h_s)}&=\int_M (\nabla^2h'*k+\nabla h' *\nabla k+h'*\nabla^2k)*k+R*h'*k*k\dv\\
	&=\int_M (\nabla h'*\nabla k*k+h'*\nabla k*\nabla k+ R*h'*k*k)\dv\\
	&\leq C\int_M \rho|\nabla h'|\rho^{-1}|k||\nabla k|+|h'||\nabla k|^2+|h'||k|^2\rho^{-2}) \dv\\
	&\leq C\sup (|h'|+ \rho|\nabla h'|)\left(\int_M |k|^2\rho^{-2}\dv+\left\|\nabla k\right\|_{L^2}^2\right)\\
	&\leq C \left\|h'\right\|_{\cH}\left\|k\right\|_{H^1}^2
	\end{align*}
	for any symmetric $2$-tensor $k\in \cH\subset H^1(M,S)$, where we employ the notation of Definition 
	\ref{classical-Sobolev}. Therefore by integration in $s$,
	\begin{align*}
	|(\Delta_{L,h}k,k)_{L^2(h)}-(\Delta_{L,h_0}k,k)_{L^2(h_0)}|\leq C\left\|h-h_0\right\|_{\cH}\left\|k\right\|_{H^1}^2.
	\end{align*}
	Now let $k\in \ker(\Delta_{L,h_0})^{\perp}\cap\cH=:N_{h_0}$
	By Theorem \ref{strong_stability}, we find 
	\begin{equation}\label{estimate-laplace-epsilon}
	\begin{split}
	(\Delta_{L,h}k,k)_{L^2(h)}&\geq (\Delta_{L,h_0}k,k)_{L^2(h_0)}-C\left\|h-h_0\right\|_{\cH(h_0)}\left\|k\right\|_{H^1(h_0)}^2\\
	&\geq (\epsilon_0-C\left\|h-h_0\right\|_{\cH(h_0)})\left\|k\right\|_{H^1(h_0)}^2
	\geq \frac{\epsilon_0}{2}\left\|k\right\|_{L^2(h_0)}^2,
	\end{split}
	\end{equation}
	for $\left\|h-h_0\right\|_{\cH(h_0)} \leq \frac{\epsilon_0}{2}$.
	It remains to show an analogous estimate for $k\in i_h(T_h\mathscr{F})^{\perp}\cap\cH=:N_{h}$ which is uniform in $h\in \cU\cap\cF$. For this purpose, let $\left\{e_1(h),\ldots e_d(h)\right\}$ be an $L^2(h)$-orthonormal basis of $i_h(T_h\mathscr{F})\subset\ker(\Delta_{L,h})$, chosen in such a way that the inequalities $\left\|e_i(h)-e_i(h_0)\right\|_{\cH}\leq C\left\|h-h_0\right\|_{\cH}$ hold for all $i\in\left\{1,\ldots,d\right\}$. This is possible due to Proposition \ref{smoothvectorbundle}. Let $\Phi_h:N_{h_0}\to N_h$ be the orthogonal projection, given by 
	\begin{align*}
	\Phi: k\mapsto k-\sum_{i=1}^d(k,e_i(h))_{L^2(h)}e_i(h).
	\end{align*}
	Since $\Phi_{h_0} = \textup{Id}$, it is clear by continuity that
	\begin{align*}
	(1-\epsilon_1)\left\|k\right\|_{L^2(h_0)}\leq \left\|\Phi_h(k)\right\|_{L^2(h)}\leq (1+\epsilon_1)\left\|k\right\|_{L^2(h_0)}
	\end{align*}
	for some $\epsilon_1>0$, provided that the neighbourhood $\cU$ is chosen small enough.
	Therefore, $\Phi_h$ is injective map. To obtain surjectivity of $\Phi$, consider the map 
	\begin{align*}
	\Psi_h: \, &N_{h_0} \times i (T_{h_0} \cF) \to \cH = N_h \oplus i (T_h \cF) \\
	&\left(k, \sum_{i=1}^d(k,e_i(h_0))_{L^2(h_0)}e_i(h_0)\right) \mapsto \left(\Phi_h(k), \sum_{i=1}^d(k,e_i(h))_{L^2(h)}e_i(h)\right).
	\end{align*}
	Since $\Psi_{h_0} = \textup{Id}_{\cH}$ and the space of surjective operators is open in the operator
	norm topology, $\Psi_h$ and hence also $\Phi_h$ is surjective for $h$ sufficiently close to $h_0$. Hence 
	$\Phi_h$ is an isomorphism for $h \in \cU \cap \cH$. \medskip
	
	Moreover, we have by definition of $\Phi_h$ and the estimate \eqref{estimate-laplace-epsilon} that 
	\begin{align*}
	(\Delta_{L,h}\Phi_h(k),\Phi_h(k))_{L^2(h)}=(\Delta_{L,h}k,k)_{L^2(h)}
	\geq \frac{\epsilon_0}{4}\left\|\Phi_h(k)\right\|_{L^2(h)}^2
	\end{align*}
	for all $k\in N_{h_0}$ with an $\epsilon_0$ independent of $h$. This yields the desired estimate.
\end{proof}

\begin{remark}
	The second statement of Theorem \ref{uniformboundlambda} still holds if the assumption of linear stability 
	is relaxed to existence of a lower bound for $\Delta_{L,h_0}$. The first statement then changes to a uniform
	lower bound $\lambda_1 > 0$ for the absolute values of the non-zero eigenvalues of $\Delta_{L,h}$ for 
	$h \in \cF$.
\end{remark}

\section{The integrability condition in the flat case}
\begin{prop}
	Let $(M,h_0)$ be a flat manifold with an orbifold singularity. Then it is linearly stable and integrable.
\end{prop}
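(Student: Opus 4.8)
The plan is to derive linear stability directly from flatness, and to realise the integrability manifold $\cF$ as the family of flat metrics obtained by adding to $h_0$ its parallel symmetric two-tensors. Linear stability is immediate: as $h_0$ is flat, the curvature term $\mathring{R}$ in \eqref{defLL} vanishes, so $\Delta_{L,h_0}=\nabla^*\nabla$ on $C^\infty_0(M,S)$, which is manifestly non-negative (and in particular bounded below, so Propositions \ref{Friedrichs} and \ref{ker-H} apply). Thus the whole content of the statement is integrability.

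The first and principal step is to identify $\ker\Delta_{L,h_0}$ (the kernel of the Friedrichs extension) with the space $\mathcal{P}$ of parallel sections of $S=\textup{Sym}^2({}^{ib}T^*M)$. For the inclusion $\ker\Delta_{L,h_0}\subseteq\mathcal{P}$, recall that the Friedrichs extension of $\nabla^*\nabla$ is the self-adjoint operator associated with the closure of the quadratic form $k\mapsto\|\nabla k\|_{L^2}^2$, $k\in C^\infty_0(M,S)$; hence every $k$ in its operator domain satisfies $(\Delta_{L,h_0}k,k)_{L^2}=\|\nabla k\|_{L^2}^2$, and $\Delta_{L,h_0}k=0$ forces $\nabla k=0$, i.e.\ $k\in\mathcal{P}$. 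For the reverse inclusion, a parallel tensor has constant pointwise norm and therefore lies in $L^2(M,S)$ by finiteness of the total volume, and it satisfies $\nabla^*\nabla k=0$ distributionally. A standard cutoff argument near the conical singularity (a logarithmic cutoff when $\dim M=2$) shows that $k$ lies in the form domain; since moreover $(\nabla k,\nabla\phi)_{L^2}=0$ for all $\phi$ in the form domain, it follows that $k\in\dom(\Delta_L)$ with $\Delta_{L,h_0}k=0$. (Alternatively, one checks that near the cone $k$ has the leading $x^0$-behaviour of the $\square_L$-kernel, carrying no singular term $c^-_\lambda$ (cf.\ the expansion preceding \eqref{Friedrichs-domain}), hence satisfies the Friedrichs boundary condition \eqref{Friedrichs-domain}.) Thus $\ker\Delta_{L,h_0}=\mathcal{P}$, which is finite-dimensional since parallel sections of a vector bundle over a connected manifold form a finite-dimensional space.

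Next I would set $\cF:=\{\,h_0+k\mid k\in\mathcal{P},\ \|k\|_{\cH}<\epsilon\,\}$ with $\epsilon>0$ small enough that $h_0+k$ is still a positive definite Riemannian metric. Each such $h_0+k$ is again flat: since $\nabla^{h_0}k=0$, the difference of the Levi--Civita connections of $h_0+k$ and $h_0$ is a tensor built from $\nabla^{h_0}k$ and hence vanishes, so $h_0+k$ has the same (identically zero) curvature; in particular $h_0+k$ is Ricci-flat and still carries an isolated conical singularity in the sense of Definition \ref{cone-metric} (a flat cone over a spherical space form). By Proposition \ref{ker-H}, applied with the spherical space form cross section---tangentially stable, $\Delta_{L,h_0}\geq0$, and with the singularity of orbifold type---one has $\mathcal{P}=\ker\Delta_{L,h_0}\subset\cH=\cH^{k,\A}_\gamma$ for every $k\in\N_0$ and $\gamma>0$ small. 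Consequently $\cF$ is the image under $k\mapsto h_0+k$ of an open ball in the finite-dimensional subspace $\mathcal{P}\subset\cH$, hence a smooth finite-dimensional submanifold with $T_{h_0}\cF=\mathcal{P}=\ker\Delta_{L,h_0}$. Both conditions of Definition \ref{integrability} hold, so $h_0$ is integrable.

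The step I expect to require the most care is the identification $\ker\Delta_{L,h_0}=\mathcal{P}$: the inclusion ``$\subseteq$'' is a direct consequence of the variational description of the Friedrichs extension, but ``$\supseteq$'' requires showing that parallel tensors actually lie in the Friedrichs domain rather than merely the maximal domain. This is precisely where the conical (orbifold) geometry enters, through the cutoff approximation and the structure of $\dom(\Delta_L)$ in \eqref{Friedrichs-domain}; in the low dimensions $\dim M\leq 4$, where $\nu(0)\leq 1$, one should verify this bookkeeping separately.
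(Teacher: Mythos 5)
Your proof is correct and follows essentially the same route as the paper: linear stability is immediate from $\Delta_{L,h_0}=\nabla^*\nabla$, kernel elements are parallel, and perturbing $h_0$ by small parallel tensors produces flat metrics, so $\cF$ is locally $h_0+\ker\Delta_{L,h_0}$. The differences are only in execution: the paper defines $\cF$ directly from $\ker\Delta_{L,h_0}$ (so it never needs your reverse inclusion $\mathcal{P}\subseteq\ker\Delta_{L,h_0}$, which you nonetheless handle correctly with the cutoff/Friedrichs-domain argument), it verifies flatness of $h_0+k$ in flat coordinate charts rather than via uniqueness of the Levi--Civita connection, and it checks persistence of the singular structure by lifting $\delta+k$ to a flat $\Gamma$-invariant metric on a ball $B_\epsilon$ covering the orbifold point.
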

\begin{proof}
Linear stability of a flat manifold is clear, since in that case $\Delta_L = \nabla^* \nabla$. 
In particular, $k\in \ker(\Delta_L)=\ker(\nabla^* \nabla)$ implies $\nabla k=0$. It therefore just remains to show that $h=k+h_0$ is a flat metric with orbifold singularities (for $k$ small enough), i.e. $\cF =  \ker(\Delta_L)$. At first, there are coordinates around each smooth point with respect to which $(h_0)_{ij}=\delta_{ij}$ so that also the functions $h_{ij}$ are constant in that chart as $h-h_0$ is parallel. Thus, $h$ is also flat. As $h_0$ has orbifold singularities, it is around each singular point isometric to $(B_{\epsilon}/\Gamma,\delta)$ where $\Gamma\subset O(m=n+1)$ is a finite subgroup acting strictly discontinuously on $\mathbb{S}^{n}$. If $k$ is a small parallel tensor on this space, $\delta+k$ lifts to a flat $\Gamma$-invariant metric on $B_{\epsilon}$ which in turn implies that $(B_{\epsilon}/\Gamma,\delta+k)$ is flat and admits an orbifold singularity at $0$. This finishes the proof of the proposition. 
\end{proof}

\section{The integrability condition in the K\"{a}hler case}

To show integrability in the K\"{a}hler case, we now adopt the usual strategy of the compact case. Integrability was recently also shown by Alix Deruelle and the first author \cite{DeKr17} for noncompact K\"{a}hler Ricci-flat manifolds which are asymptotically locally Euclidean (ALE). In Section 2.3 in this paper, proofs of \cite{Bal06,KoSp60,Tia87} and in \cite{Besse} have been adapted to the ALE case. Here, we do the steps in the same order: First, we show that any infinitesimal complex deformation actually integrates to a family of complex structures (Theorem \ref{cpx-str-int}) for which we need an adaption of the $\partial\bar{\partial}$-Lemma for manifolds with isolated conical singularities (Lemma \ref{partialbarpartiallemma}).
Having this family of complex structures, implicit function arguments are used then to construct a family of K\"{a}hler metrics associated to this family of complex structures (Proposition \ref{kahler-def}) and eventually a family of Ricci-flat K\"{a}hler metrics by adding $\partial\bar{\partial}$ of suitable potential functions (Theorem \ref{CY-ALE-Int}).
\medskip

In all these steps, suitable function spaces have to be used in order to apply elliptic regularity.
For the case of a strictly tangentially stable manifold which conical singularity, we use weighted Sobolev spaces whereas in the case of orbifolds, we can work with standard Sobolev spaces. In the latter case, we can lift every appearing object  close to the orbifold singularity by a finite covering to an object on a manifold where we can use local elliptic regularity. \medskip

Let $(M, h_0, J_0)$ be a Ricci-flat K\"ahler manifold with a conical singularity. 
The tangent bundle ${}^{ib} TM$ as well as the exterior bundle $\Lambda^* \left({}^{ib} T^*M\right)$ admit
a bi-grading with respect to the complex structure $J_0$
\begin{equation}\label{bigrading}
{}^{ib} TM = T^{1,0}_{J_0} M \oplus T^{0,1}_{J_0}M, \quad 
\Lambda^* M := \Lambda^*  \left({}^{ib} T^*M\right) = \bigoplus_{(p,q)} \Lambda^{p,q}_{J_0}M.
\end{equation}
Let $k\in C^{\infty}(M,S_0)$ and $k_H,k_A$ its hermitian and anti-hermitian part, respectively.
The hermitian and anti-hermitian part are preserved by $\Delta_L$. 
 We can define $I\in  C^{\infty}(M,{}^{ib} T^*M\otimes {}^{ib} TM)$ and $\kappa\in  C^{\infty}(M,\Lambda^{1,1}_{J_0}M)$ by
	\begin{align}\label{herm+antiherm}
	h_0(X,IY)=-k_A(X,J_0Y),\qquad \kappa(X,Y)=k_H(J_0X,Y).
	\end{align}
	It is easily seen that $I$ is a symmetric endomorphism satisfying $IJ_0+J_0I=0$ and thus can be viewed as 
	$I\in  C^{\infty}(M, \Lambda^{0,1}_{J_0}M\otimes T^{1,0}_{J_0}M)$.
 We have the relations
$I(\Delta_L(k_A))=\Delta_C(I(k_A))$ and $\kappa(\Delta_L(k_H) )=\Delta_H(\kappa(k_H))$, where $\Delta_C=\bar{\partial}^*\bar{\partial}+\bar{\partial}\bar{\partial}^*$ and $ \Delta_H$ are the complex Laplacian and the Hodge Laplacian acting on $C^{\infty}(M,
\Lambda^{0,1}_{J_0}M\otimes T^{1,0}_{J_0}M)$ and $C^{\infty}(M,\Lambda^{1,1}_{J_0}M)$, respectively. 
For details see \cite{Koi83} and \cite[Chap. $12$]{Besse}.
As a consequence, we get exactly as in \cite{Koi83}:
\begin{thm}[Koiso]
	If $(M,h_0,J_0)$ is a Ricci-flat K\"{a}hler manifold with conical singularities, it is linearly stable.
\end{thm}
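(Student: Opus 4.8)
The plan is to deduce linear stability straight from the definition: by Definition~\ref{linear-stability} it suffices to prove that the quadratic form $k\mapsto(\Delta_L k,k)_{L^2}$ is nonnegative on compactly supported smooth sections $k\in C^\infty_0(M,S)$. The structural point is that the conical singularity is \emph{irrelevant} for this particular statement, since every element of $C^\infty_0(M,S)$ is supported in the open smooth locus of $M$; consequently all the pointwise algebraic splittings and all the integration-by-parts identities of the classical K\"ahler situation (cf.\ Koiso \cite{Koi83} and \cite[Chap.~12]{Besse}) are available verbatim, and the whole proof reduces to the manifest nonnegativity of the Hodge and Dolbeault Laplacians via the intertwining relations recorded just before the statement.

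First I would split $S=S_0\oplus S_1$. Since $h_0$ is Ricci-flat we have $\mathring{R}h_0=0$, so $\Delta_L$ preserves this orthogonal decomposition and acts on the pure-trace part by $\Delta_L(f h_0)=(\Delta f)\,h_0$, i.e.\ as the Laplace--Beltrami operator, giving $(\Delta_L(fh_0),fh_0)_{L^2}\geq0$ for $f\in C^\infty_0(M)$; this reduces the claim to $k\in C^\infty_0(M,S_0)$. Next I would decompose $k=k_H+k_A$ into its hermitian and anti-hermitian parts --- a fibrewise, compact-support-preserving operation --- and invoke the facts stated before the theorem: $\Delta_L$ respects this orthogonal splitting, the bundle maps $\kappa$ and $I$ induced by the $h_0$-orthogonal complex structure $J_0$ are fibrewise isometries onto $\Lambda^{1,1}_{J_0}M$ and $\Lambda^{0,1}_{J_0}M\otimes T^{1,0}_{J_0}M$ respectively, and $\kappa\circ\Delta_L=\Delta_H\circ\kappa$, $I\circ\Delta_L=\Delta_C\circ I$. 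Combining these yields
\begin{align*}
(\Delta_L k,k)_{L^2}&=(\Delta_H\kappa(k_H),\kappa(k_H))_{L^2}+(\Delta_C I(k_A),I(k_A))_{L^2}\\
&=\|d\kappa(k_H)\|_{L^2}^2+\|d^*\kappa(k_H)\|_{L^2}^2+\|\bar{\partial} I(k_A)\|_{L^2}^2+\|\bar{\partial}^* I(k_A)\|_{L^2}^2\ \geq\ 0,
\end{align*}
the second equality being the usual integration by parts, legitimate because all sections are compactly supported away from the singularity. Together with the $S_1$-estimate this gives $(\Delta_L k,k)_{L^2}\geq0$ for every $k\in C^\infty_0(M,S)$, which is linear stability.

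The only substantive ingredient --- and it is already in place in the material preceding the theorem --- is the verification of the two intertwining identities together with the isometry property of $\kappa$ and $I$; these are exactly Koiso's Weitzenb\"ock-type computations in the smooth K\"ahler setting, and since $J_0$, $h_0$ and all tensors in sight are smooth on the open manifold $M$ they transfer with no modification. Hence I expect \emph{no} genuine analytic obstacle here: the content of the theorem is Koiso's linear-stability theorem, localized to $C^\infty_0(M,S)$, and the singular geometry enters only --- and trivially --- through the fact that linear stability is by definition a statement about that space of sections.
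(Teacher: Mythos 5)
Your proposal is correct and takes essentially the same route as the paper, which simply invokes Koiso's argument via the hermitian/anti-hermitian splitting and the intertwining of $\Delta_L$ with the Hodge and complex Laplacians, both manifestly nonnegative on compactly supported sections. The extra details you supply --- the pure-trace reduction and the observation that the conical singularity is invisible because linear stability only concerns $C^\infty_0(M,S)$ --- are precisely the implicit content behind the paper's ``as a consequence, exactly as in Koiso.''
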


The next two results hold for any K\"{a}hler manifold $(M,h,J)$ 
with an isolated conical singularity which is either an orbifold or where the cross 
section of the cone is strictly tangentially stable. Since in these results the complex structure $J$ is fixed, we omit the lower 
index in the bi-grading decompositions in \eqref{bigrading}. Let $l\geq \frac{m}{2}+1$, where $
m=\dim(M)$. We start with a version of the $\partial\bar{\partial}$-Lemma
adapted to manifolds with conical singularities.

\begin{lem}[$\partial\bar{\partial}$-Lemma for manifolds with isolated conical singularities]\label{partialbarpartiallemma}
	Let $(M^m,h,J)$ be a K\"{a}hler manifold with a conical singularity and $\delta\geq-\frac{m}{2}$ be a 
	non-exceptional for the Hodge-Dolbeault operator.
	Let $\alpha\in H^l_{\delta}(M,\Lambda^{p,q}M)$. Suppose that
	\begin{itemize}
		\item $\alpha=\partial\beta$ for some $\beta\in H^{l+1}_{\delta+1}(M,\Lambda^{p-1,q}M)$ and $\bar{\partial}\alpha=0$ or
		\item $\alpha=\bar{\partial}\beta$ for some $\beta\in H^{l+1}_{\delta+1}(M,\Lambda^{p,q-1}M)$ and ${\partial}\alpha=0$.
	\end{itemize}
	Then there exists a form $\gamma\in H^{l+2}_{\delta+2}(M,\Lambda^{p-1,q-1}M)$ such that $\alpha=\partial\bar{\partial}\gamma$. Moreover, we can choose $\gamma$ to satisfy the estimate $\left\|\gamma\right\|_{H^{l+2}_{\delta+2}}\leq C\cdot \left\|\alpha\right\|_{H^{l}_{\delta}}$. for some $C>0$.
	
	If $(M,g,J)$ is a K\"{a}hler manifold with orbifold singularities, an analogous assertion holds for forms in unweighted Sobolev spaces.			
\end{lem}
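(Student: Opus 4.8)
The plan is to adapt the classical $\partial\bar\partial$-Lemma to the conical setting by working with the Hodge–Dolbeault operator in weighted Sobolev spaces and invoking the Fredholm/isomorphism machinery from Theorem~\ref{thm-iso}. First I would set up the relevant Hodge theory: on the conical manifold $(M^m,h,J)$ the complex Laplacian $\Delta_C = \bar\partial^*\bar\partial + \bar\partial\bar\partial^*$ on $\Lambda^{p,q}M$ has a regular–singular expression near the singularity of the form \eqref{reg-sing}, with a tangential operator $\square$ on the cross section. Since $\delta$ is chosen to be non-exceptional for the Hodge–Dolbeault operator, classical cone calculus (Mazzeo, as cited) gives that $\Delta_C : H^{l+2}_{\delta+2} \to H^{l}_{\delta}$ is Fredholm, and more generally that $\bar\partial$, $\partial$, and their formal adjoints map between the appropriate weighted spaces. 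The key point is that for non-exceptional weights $\delta\geq -m/2$ one obtains a weighted Hodge decomposition
\begin{equation*}
H^{l}_{\delta}(M,\Lambda^{p,q}M) = \bar\partial\bigl(H^{l+1}_{\delta+1}\bigr)\oplus \bar\partial^*\bigl(H^{l+1}_{\delta+1}\bigr)\oplus \mathcal{H}^{p,q}_{\delta},
\end{equation*}
where $\mathcal{H}^{p,q}_{\delta}$ is the (finite-dimensional) space of $\Delta_C$-harmonic forms in that weighted space, with the sum orthogonal in the $L^2$ pairing when the weights are symmetric; for general non-exceptional $\delta$ one still gets a topological direct sum with closed ranges because the ranges of the Fredholm maps are closed.

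Next I would run the standard argument. Assume $\alpha = \partial\beta$ with $\bar\partial\alpha = 0$ (the other case is conjugate-symmetric). Decompose $\alpha = \bar\partial\eta_1 + \bar\partial^*\eta_2 + \alpha_H$ according to the weighted Hodge decomposition, with $\alpha_H$ harmonic. Because $\alpha = \partial\beta$ is $\partial$-exact and $\bar\partial$-closed, pairing against harmonic forms (using that weighted-harmonic forms for non-exceptional weights are also $\partial$- and $\bar\partial$-closed and coclosed, by the regularity of elements of $\ker\Delta_C$ in weighted spaces together with the asymptotics \eqref{cone-asymptotics}) kills $\alpha_H$, and testing $\bar\partial\alpha = 0$ against $\bar\partial^*\eta_2$ forces $\bar\partial^*\eta_2 = 0$; hence $\alpha = \bar\partial\eta$ for some $\eta\in H^{l+1}_{\delta+1}(M,\Lambda^{p,q-1}M)$, which we may take coclosed, i.e. $\eta = \bar\partial^*\zeta$ with $\zeta\in H^{l+2}_{\delta+2}$ obtained by inverting $\Delta_C$ on the orthogonal complement of its kernel (Theorem~\ref{thm-iso}-type isomorphism, after possibly a harmless perturbation of $\delta$ within the non-exceptional range). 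Then one commutes: using the K\"ahler identities — which hold pointwise and hence on $(M,h,J)$ away from the singularity, and extend to the weighted spaces by density of $C^\infty_0$ — one has $[\Delta_C,\partial]=0$ and the Hodge–K\"ahler relation $\Delta_C = \tfrac12\Delta_d$, so that $\partial$ of a $\Delta_C$-harmonic form stays harmonic; writing $\eta = \bar\partial^*\zeta$ and using $\partial\beta = \alpha = \bar\partial\bar\partial^*\zeta$, a now-standard manipulation (as in \cite[Chap. 6]{Besse} or Koiso \cite{Koi83}) produces $\gamma\in H^{l+2}_{\delta+2}(M,\Lambda^{p-1,q-1}M)$ with $\alpha = \partial\bar\partial\gamma$; concretely $\gamma$ is built by applying a weighted Green operator for $\Delta_C$ to an appropriate contraction of $\beta$ and $\zeta$. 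The norm estimate $\|\gamma\|_{H^{l+2}_{\delta+2}}\le C\|\alpha\|_{H^l_\delta}$ is then immediate from the boundedness of the weighted Green operator and of $\partial,\bar\partial,\partial^*,\bar\partial^*$ between the relevant weighted Sobolev spaces (this is where the hypothesis $l\ge m/2+1$, giving a Banach algebra structure via Theorem~\ref{embedding-theorem}, is convenient, though for the linear statement ellipticity alone suffices).

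For the orbifold case, the argument is easier: near an orbifold singularity one lifts $\alpha$, $\beta$, and all auxiliary forms by the local finite covering $B_\epsilon \to B_\epsilon/\Gamma$ to $\Gamma$-invariant smooth forms on a genuine ball, applies the ordinary elliptic theory and the classical $\partial\bar\partial$-Lemma there, and descends; because there are no weights, one works throughout with the unweighted Sobolev spaces of Definition~\ref{classical-Sobolev}, and the Fredholm/Hodge-decomposition input is just the usual compact (orbifold) Hodge theory. I expect the main obstacle to be the bookkeeping around \emph{which} weighted-harmonic forms are genuinely closed and coclosed: for exceptional or borderline weights the weighted Hodge decomposition can fail to be orthogonal and harmonic forms need not be $\partial$-closed, so one must verify carefully — using the indicial roots of $\square$ and the partial expansion \eqref{cone-asymptotics} — that the chosen $\delta$ is non-exceptional for all the operators ($\bar\partial$, $\partial$, $\Delta_C$, and their adjoints on all the bidegrees appearing) simultaneously, perturbing $\delta$ slightly if necessary, and that this perturbation is compatible with the hypothesis $\delta\ge -m/2$. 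Once the correct weighted Hodge package is in place, the rest is the classical computation transcribed verbatim.
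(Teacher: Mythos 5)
Your overall strategy (weighted Hodge theory for the Hodge--Dolbeault operator, Fredholmness at non-exceptional weights, a Green operator, and then the classical compact-K\"ahler manipulation) is the same in spirit as the paper's, and your treatment of the orbifold case matches it. But there is a genuine gap in the central step: you assert a weighted Hodge decomposition
$H^{l}_{\delta}=\bar{\partial}\bigl(H^{l+1}_{\delta+1}\bigr)\oplus\bar{\partial}^*\bigl(H^{l+1}_{\delta+1}\bigr)\oplus\mathcal{H}^{p,q}_{\delta}$
at the \emph{given} non-exceptional weight $\delta\geq-\frac{m}{2}$, justified only by ``closed ranges of Fredholm maps''. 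That is not enough: with respect to the $L^2$ pairing, the cokernel of $\Delta:H^{l+2}_{\delta+2}\to H^{l}_{\delta}$ is identified with the kernel of $\Delta$ at the \emph{dual} weight $-m-\delta$, which for $\delta>-\frac{m}{2}$ is a lower weight and can carry strictly more harmonic forms than $\ker(\Delta)\cap H^{l}_{\delta}$ whenever there are exceptional weights between $-m-\delta$ and $\delta$. Closedness of the range then only gives a topological direct sum of $\ker\oplus\mathrm{im}$ inside $H^l_\delta$, not that it exhausts $H^l_\delta$, and perturbing $\delta$ slightly (your proposed fix) does not remove exceptional weights lying strictly between the dual pair. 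The paper avoids this by working at a weight $\delta'\in[-\frac{m}{2},-\frac{m}{2}+\epsilon)$, where the weight is essentially self-dual and the $L^2$-orthogonal decompositions $H^{l}_{\delta'}=\ker(\Delta)\oplus\Delta(H^{l+2}_{\delta'+2})$ and $d(H^{l+1}_{\delta'+1})\oplus d^*(H^{l+1}_{\delta'+1})=\Delta(H^{l+2}_{\delta'+2})$ are legitimate, defining the Green operator there and setting $\gamma=-G\partial^*\bar{\partial}^*G\alpha$.

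The second missing ingredient is the weight upgrade: having produced $\gamma\in H^{l+2}_{\delta'+2}$ at the low weight, the paper then uses the equation $\alpha=\partial\bar{\partial}\gamma$ itself, together with a regularity argument as in Joyce (p.~185), to conclude a posteriori that $\gamma\in H^{l+2}_{\delta+2}$ for the original $\delta\geq\delta'$. Your proposal has no counterpart of this step because you (incorrectly) try to run the Hodge theory at $\delta$ directly. A smaller point: the explicit formula $\gamma=-G\partial^*\bar{\partial}^*G\alpha$ yields the estimate $\|\gamma\|_{H^{l+2}_{\delta+2}}\leq C\|\alpha\|_{H^{l}_{\delta}}$ purely in terms of $\alpha$, whereas your construction routes through $\beta$ and $\zeta$ (``a contraction of $\beta$ and $\zeta$''), which would at best give a bound involving $\|\beta\|$ unless you revert to the standard $\alpha$-only formula. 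Fixing your argument amounts to restructuring it into the paper's two-step form: Hodge theory and Green operator at a weight just above $-\frac{m}{2}$, then improvement of the weight of $\gamma$ from the equation.
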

\begin{proof}
	For the orbifold case, the proof is exactly as in \cite[Lemma 5.50]{Bal06}. 
\medskip
	
	For the other case, we argue as follows: 
	Let $d=\partial$ or $d=\bar{\partial}$ and $\Delta=\Delta_{\partial}=\Delta_{\bar{\partial}}$ be the
	Hodge-Dolbeault operator acting on $\Lambda^* M$. Let $\epsilon>0$ be small and $\delta'\in [-m/2,-m/2+\epsilon)$ be a
	non-exceptional weight for $\Delta$. Consider $\Delta$ as an operator $\Delta:H^{l+2}_{\delta'+2}
	(M,\Lambda^{*}M)\to H^{l}_{\delta'}(M,\Lambda^{*}M)$.
		Because of the assumption on $\delta'$, it is Fredholm and we have the $L^2$-orthogonal decomposition
		\begin{align*}
			H^k_{\delta'}(M,\Lambda^{*}M)=\ker(\Delta)\oplus \Delta(H^{l+2}_{\delta'+2}(M,\Lambda^{*}M)).
		\end{align*}
		We define the Green's operator $G:H^{k}_{\delta'}(M,\Lambda^{*}M)\to H^{l+2}_{\delta'+2}(M,\Lambda^{*}M)$ to be zero on $\ker(\Delta)$ and to be the inverse of $\Delta$ on $\ker(\Delta)^{\perp}$.
	    We also have
		\begin{align*}
			d( H^{l+1}_{\delta'+1}(M,\Lambda^{*}M))\oplus d^{*}(H^{l+1}_{\delta'+1}(M,\Lambda^{*}M))=\Delta(H^{l+2}_{\delta'+2}(M,\Lambda^{*}M)).
		\end{align*}
		and $G$ is self-adjoint and commutes with $d$ and $d^*$.
		One now shows that $\gamma=-G\partial^*\bar{\partial}^*G\alpha$ does the job in both cases. The estimate on $\gamma$ follows from construction. Now if $\delta\geq -\frac{m}{2}$ with $\delta\geq\delta'$, we still get $\gamma\in H^{l+2}_{\delta'+2}$ satisfying the above condition. However, as in \cite[p.\ 185]{Joy00}, we can conclude that the equation $\alpha=\partial\bar{\partial}\gamma$ already implies that $\gamma\in H^{l+2}_{\delta+2}$.
\end{proof}

An infinitesimal complex deformation is an endomorphism $I:TM\to TM$ that anticommutes with $J$ and satisfies $\bar{\partial}I=0$ and $\bar{\partial}^*I=0$. By the relation $IJ+JI=0$, $I$ can be viewed as a section of $\Lambda^{0,1}M\otimes T^{1,0}M$.

\begin{thm}\label{cpx-str-int}  Let $(M^m,h,J)$ be a K\"{a}hler manifold with a non-orbifold conical singularity and vanishing first Chern class. Let $\delta> 0$ be non-exceptional for the Hodge-Dolbeault operator and $I\in H^{l}_{\delta}(M,\Lambda^{0,1}M\otimes T^{1,0}M)$ be an infinitesimal complex deformation. Then there exists a smooth family of complex structures $J(t)$ with $J(0)=J$ such that $J(t)-J\in H^{l}_{\delta}(M,T^*M\otimes TM)$ and $J'(0)=I$.
\medskip
	
	If $(M,h,J)$ is a K\"{a}hler manifold with orbifold singularities, an analogous assertion holds for unweighted Sobolev spaces.		
\end{thm}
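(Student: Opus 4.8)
The plan is to solve the integrability equation for deformations of the complex structure by a convergent power series, that is, to run the Kuranishi--Koiso--Tian--Todorov argument with all analysis carried out in the weighted Sobolev scale $H^l_\bullet$ and with the elliptic estimates supplied by the $\partial\bar\partial$-Lemma just proved. A deformation of $J$ is encoded by a form $\phi \in H^l_{\delta}(M,\Lambda^{0,1}M\otimes T^{1,0}M)$, the corresponding almost complex structure being integrable precisely when the Maurer--Cartan equation
\[
\bar\partial \phi + \tfrac12 [\phi,\phi] = 0
\]
holds, where $[\cdot,\cdot]$ is the natural bracket on $T^{1,0}M$-valued forms (a first order bidifferential operator). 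We make the ansatz $\phi(t) = \sum_{k\ge 1} t^k \phi_k$ with $\phi_1 = I$, and impose the gauge condition $\bar\partial^*\phi_k = 0$ for $k \ge 2$; together with the order-$k$ component $\bar\partial\phi_k = -\tfrac12\sum_{i+j=k}[\phi_i,\phi_j]$ of the Maurer--Cartan equation this determines each $\phi_k$ uniquely.

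\textbf{The recursion.} Suppose $\phi_1,\dots,\phi_{k-1}\in H^l_{\delta}$ have been constructed, are $\bar\partial$-closed, and satisfy the gauge condition for indices $\ge 2$. Put $\Psi_k := -\tfrac12\sum_{i+j=k}[\phi_i,\phi_j]$, so that we must find $\phi_k$ with $\bar\partial\phi_k = \Psi_k$. The Jacobi identity for the bracket together with the inductive hypothesis gives $\bar\partial\Psi_k = 0$. The decisive input is the Tian--Todorov identity, which is where the vanishing of the first Chern class enters: since $(M,h,J)$ is Ricci-flat K\"ahler its canonical bundle is flat, so (possibly after passing to a finite cover) there is a holomorphic volume form with controlled asymptotics at the cone, and contraction with it identifies $\Lambda^{0,1}M\otimes T^{1,0}M$ with $\Lambda^{\frac m2-1,1}M$ in such a way that $[\phi_i,\phi_j]$ becomes $\partial$-exact whenever $\phi_i,\phi_j$ are $\bar\partial$-closed. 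Hence $\Psi_k$ is $\partial$-exact and $\bar\partial$-closed, the weighted $\partial\bar\partial$-Lemma above applies and produces $\gamma_k$ with $\Psi_k = \partial\bar\partial\gamma_k$ and a uniform estimate $\|\gamma_k\| \le C\|\Psi_k\|$ in the relevant weighted norms; subtracting a $\bar\partial$-harmonic term to enforce the gauge we obtain a solution $\phi_k$ of $\bar\partial\phi_k=\Psi_k$, again in $H^l_{\delta}$ and $\bar\partial$-closed. One verifies inductively that the formal series $\phi(t)$ is then $\bar\partial$-closed, so that the Jacobi-identity step is legitimate at the next order and the full series solves the Maurer--Cartan equation.

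\textbf{Convergence and regularity.} Since $l \ge \tfrac m2 +1 > \tfrac m2$, Theorem~\ref{embedding-theorem} makes $H^l_\bullet$ into a multiplicative scale with $H^l_{\delta_1}\cdot H^l_{\delta_2}\hookrightarrow H^l_{\delta_1+\delta_2}$, and as $\delta>0$ we have $H^l_{2\delta}\subset H^l_{\delta}$; combined with the uniform bound on the right inverse of $\bar\partial$ coming from the $\partial\bar\partial$-Lemma this yields $\|\phi_k\|_{H^l_{\delta}}\le C\sum_{i+j=k}\|\phi_i\|_{H^l_{\delta}}\|\phi_j\|_{H^l_{\delta}}$. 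A standard majorant comparison with the power series solving $w = \|I\|\,t + C w^2$ gives a positive radius of convergence, so $\phi(t)\in H^l_{\delta}$ for $|t|$ small, depends analytically on $t$, and satisfies $\phi(0)=0$, $\phi'(0)=I$; by Theorem~\ref{embedding-theorem} the regularity $l\ge\tfrac m2+1$ is enough for Newlander--Nirenberg, and interior elliptic regularity upgrades $J(t)$ to a smooth complex structure away from the cone tip, with $J(t)-J\in H^l_{\delta}(M,T^*M\otimes TM)$. In the orbifold case one runs the identical argument in unweighted Sobolev spaces, lifting the objects near a singular point to a finite cover and using interior elliptic regularity, exactly as in the $\partial\bar\partial$-Lemma above.

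\textbf{Main obstacle.} The algebraic core of the Tian--Todorov argument is routine; the real work is the analysis at the conical end. One must (i) exhibit a holomorphic volume form on $(M,h,J)$ with controlled asymptotics at the singularity, so that the Tian--Todorov contraction is a bounded isomorphism of the relevant weighted bundles --- this uses flatness of the canonical bundle under Ricci-flatness, but the behaviour near the cone tip has to be pinned down; and (ii) verify that $\bar\partial^*$, the Hodge--Dolbeault Green's operator, and the $\partial\bar\partial$-primitive all act boundedly between the chosen weighted spaces and, crucially, that the derivative loss in the bracket $[\cdot,\cdot]$ is absorbed by the weight arithmetic so that every $\phi_k$ remains inside the single algebra $H^l_{\delta}$. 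This forces $\delta$ and $\delta+2$ to avoid the discrete set of exceptional weights of the Hodge--Dolbeault operator (cf. Theorem~\ref{thm-iso}), and tracking these weights consistently through the recursion is the crux of the argument.
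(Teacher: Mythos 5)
Your overall strategy coincides with the paper's: Tian's power-series solution of the Maurer--Cartan equation $\bar\partial I(t)+\tfrac12[I(t),I(t)]=0$, using triviality of the canonical bundle (vanishing first Chern class) to contract the coefficients into forms on which $\partial$ and $\bar\partial$ act, and the weighted $\partial\bar\partial$-Lemma to solve the recursion inside $H^{l}_{\delta}$, with convergence by elliptic/majorant estimates and unweighted spaces in the orbifold case. However, your inductive step has a genuine gap. You assert that the $\phi_k$ are $\bar\partial$-closed and that the Tian--Todorov identity makes $[\phi_i,\phi_j]$ $\partial$-exact ``whenever $\phi_i,\phi_j$ are $\bar\partial$-closed''. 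Both claims are wrong: for $k\geq 2$ one has $\bar\partial\phi_k=\Psi_k\neq 0$, so neither the $\phi_k$ nor the series $\phi(t)$ is $\bar\partial$-closed (the series satisfies Maurer--Cartan, not $\bar\partial\phi(t)=0$); and the Tian--Todorov lemma requires $\partial$-closedness of the \emph{contracted} forms, not $\bar\partial$-closedness of the $T^{1,0}$-valued forms. The property that must be carried through the induction is exactly this $\partial$-closedness (in practice $\partial$-exactness) of the contracted $\phi_k$; without it you cannot conclude that $\Psi_{k}$ is $\partial$-exact, and then the weighted $\partial\bar\partial$-Lemma, which needs one of its two exactness hypotheses, is not applicable, so the recursion does not close as written.

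The paper closes the induction by choosing the solution differently: having produced $\psi$ with $\partial\bar\partial\psi=-\tfrac12\sum_{i+j=N}[\phi_i,\phi_j]$, it sets $\phi_N:=\partial\psi$, so that $\partial\phi_N=0$ is automatic at the next order, while $\phi_1$ is $\partial$-closed because it is harmonic ($\bar\partial\phi_1=0$, $\bar\partial^{*}\phi_1=0$) and one invokes the K\"ahler structure. Your $\bar\partial^{*}$-gauge choice obscures precisely this point: first, subtracting a $\bar\partial$-harmonic term does not in general achieve $\bar\partial^{*}\phi_k=0$ (the gauge-fixed solution is $\bar\partial^{*}G\Psi_k$); second, and more importantly, it is not manifest that such a $\phi_k$ is again $\partial$-exact after contraction, which is what the next inductive step needs. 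One can repair this either by dropping the gauge and taking $\phi_N=\partial\psi$ as the paper does, or by proving, via the K\"ahler identities and the commutation of the weighted Green's operator with $\partial$ (which must itself be justified in the weighted Fredholm setting), that $\bar\partial^{*}G\,\partial\bar\partial\gamma_k$ is $\partial$-exact. Until one of these is supplied, the application of the Tian--Todorov identity at each stage is unjustified.
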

\begin{proof}
		The proof follows along the lines of Tian's proof by the power series approach \cite{Tia87}:
	We write $J(t)=J(1-I(t))(1+I(t))^{-1}$, where the family $I(t)\in  H^k_{\delta}(M,\Lambda^{0,1}M \otimes T^{1,0}M)$
	has to solve the equation
	\begin{align*}
	\bar{\partial}I(t)+\frac{1}{2}[I(t),I(t)]=0,
	\end{align*}
	where $[\, . \, , . \, ]$ denotes the Fr\"{o}licher-Nijenhuis bracket. If we write $I(t)$ as a formal power series
	$I(t)=\sum_{k\geq 1} I_kt^k$, the coefficients have to solve the equation
	\begin{align*}
	\bar{\partial} I_N+\frac{1}{2}\sum_{k=1}^{N-1}[I_k,I_{N-k}]=0,
	\end{align*}
	inductively for all $N\geq 2$. 
	Because the first Chern class vanishes, $\Lambda^{m,0}M$ is trivial.
	Therefore, we have a natural identification of the bundles $\Lambda^{0,1}M\otimes T^{1,0}M=\Lambda^{n-1,1}M$ by using the holomorphic volume form and we now think of the $I_k$ as being $(m-1,1)$-forms.
	Initially, we have chosen
	$I_1\in H^k_{\delta}(M,\Lambda^{0,1}M\otimes T^{1,0}M)$, given by $I=2I_1J$. 
	By the multiplication property of weighted Sobolev spaces and by the assumption $\delta>0$, $[I_1,I_1]\in H^{k-1}_{\delta-1}(M,\Lambda^{m-1,2}M)$. Using $\bar{\partial} I_1=0$ and $\bar{\partial}^*I_1=0$, one can now show that $\bar{\partial}[I_1,I_1]=0$ and $[I_1,I_1]$ is $\partial$-exact. The $\partial\bar{\partial}$-lemma now implies the existence of a $\psi\in  H^{k+1}_{\delta+1}(M,\Lambda^{m-2,1}M)$ such that $$\partial\bar{\partial}\psi=-\frac{1}{2}[I_1,I_1],$$ and so, $I_2=\partial \psi\in  H^{k}_{\delta}(M,\Lambda^{m-1,1}M)$ does the job.
	Inductively, we get a solution of the equation
	\begin{align*}
	\partial\bar{\partial}\psi=\frac{1}{2}\sum_{k=1}^{N-1}[I_k,I_{N-k}],
	\end{align*}
	by the $\partial\bar{\partial}$-lemma since the right hand side is $\bar{\partial}$-closed and $\partial$-exact (which in turn is true because $\partial I_k=0$ for $1\leq k\leq N-1$).
	Now we can choose $I_N=\partial\psi\in   H^{k}_{\delta}(M,\Lambda^{m-1,1}M)$.
	Convergence of this series for small $t$ is shown by standard elliptic estimates, c.f.\ also \cite[Theorem 1.14]{DeKr17}. In the orbifold case, the steps are the same but we don't need weighted spaces.
\end{proof}
In the following we mean by $(\delta)$ that the exceptional weight $\delta$ only 
appears in the case of a conical singularity which is not an orbifold. In the orbifold case, we 
work with ordinary Sobolev spaces.\medskip

The proof of Theorem \ref{cpx-str-int} provides an analytic immersion 
\begin{equation*}
\Theta:H^{l}_{(\delta)}(M,\Lambda^{0,1}M\otimes T^{1,0}M)\cap \ker(\Delta)\supset U\to H^{k}_{(\delta)}(M,T^*M\otimes TM)
\end{equation*}
by mapping $I\in U$ to $J(1)$ where $J(t)$ is defined by the power series construction in the above proof. By making $U$ small enough, we can ensure that this power series converges for $t=1$ so that the definition of $\Theta$ makes sense.
The image of this map is a smooth manifold of complex structures which we denote by $\mathcal{J}^k_{(\delta)}$ and whose tangent map at $J$ is just the injection. \medskip

\begin{prop}\label{kahler-def}Let $(M,h_0,J_0)$ and
	$\mathcal{J}^l_{(\delta)}$ be as above and let $\delta>0$ be a non-exceptional weight for the Hodge-Dolbeault operator. 
	Then there exists a $H^{l}_{(\delta)}$- neighbourhood $\mathcal{U}$ of $J_0$ and 
	a smooth map $\Phi:\mathcal{J}^l_{(\delta)}\cap \mathcal{U}\to\mathcal{M}^l_{(\delta)}$ 
	which associates to each $J\in \mathcal{J}^l_{(\delta)}\cap \mathcal{U}$ sufficiently 
	close to $J_0$ a metric $h(J)$ which is $H^{l}_{(\delta)} $-close to $h_0$ and K\"{a}hler 
	with respect to $J$. Moreover, we can choose the map $\Phi$ such that 
	$$d\Phi_{J_0}(I)(X,Y)=\frac{1}{2}(k_0(IX,J_0Y)+k_0(J_0X,IY)).$$
\end{prop}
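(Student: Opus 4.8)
The plan is to adapt the classical argument that small deformations of a compact K\"ahler manifold remain K\"ahler (see \cite[Chap.\ 12]{Besse}, \cite{KoSp60}, \cite{Bal06}, and \cite[Section 2.3]{DeKr17} for the ALE analogue) to the weighted Sobolev framework of \S\ref{section-weighted}. Let $\omega_0(\cdot,\cdot)=h_0(J_0\cdot,\cdot)$ denote the K\"ahler form of $h_0$; it is $d$-closed and of type $(1,1)$ with respect to $J_0$. For $J\in\mathcal{J}^l_{(\delta)}\cap\mathcal{U}$ close to $J_0$ decompose $\omega_0$ into its $J$-types,
\[
\omega_0=\alpha_J+\eta_J+\overline{\alpha_J},
\]
with $\alpha_J$ of type $(2,0)_J$ and $\eta_J$ a real form of type $(1,1)_J$. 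These components are obtained by applying to the fixed section $\omega_0$ algebraic projections depending smoothly (in fact polynomially) on $J$, so $J\mapsto(\alpha_J,\eta_J)$ is smooth into $H^l_{(\delta)}$ — here one uses that $\omega_0$ and its $b$-derivatives are bounded together with the multiplication/module properties of the weighted Sobolev spaces (cf.\ Theorem \ref{embedding-theorem}) — and $\alpha_{J_0}=0$, $\eta_{J_0}=\omega_0$, so $\alpha_J$ and $\eta_J-\omega_0$ are small in $H^l_{(\delta)}$ for $J$ near $J_0$. In the orbifold case the same is carried out with ordinary Sobolev spaces, lifting objects near the singular points to a finite cover and using interior elliptic regularity.

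Next, from $d\omega_0=0$ one reads off, by $J$-type, that $\partial_J\alpha_J=0$ and $\partial_J\eta_J=-\bar\partial_J\alpha_J$; in particular $\bar\partial_J\alpha_J$ is $\bar\partial_J$-closed and $\partial_J$-exact. Applying the $\partial\bar\partial$-Lemma for manifolds with isolated conical singularities proved above (valid since $\delta>0$ is non-exceptional for the Hodge--Dolbeault operator) yields $\gamma_J$ in a suitable weighted Sobolev space, with $\gamma_{J_0}=0$ and $\|\gamma_J\|$ controlled by $\|\bar\partial_J\alpha_J\|$ uniformly in $J$, such that $\bar\partial_J\alpha_J=\partial_J\bar\partial_J\gamma_J$. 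Set
\[
\omega_J:=\eta_J+\bar\partial_J\gamma_J+\partial_J\overline{\gamma_J}.
\]
Then $\omega_J$ is real and of type $(1,1)_J$ by construction, and, using $\partial_J^2=0$ (which holds since $J$ is integrable),
\[
\partial_J\omega_J=\partial_J\eta_J+\partial_J\bar\partial_J\gamma_J=-\bar\partial_J\alpha_J+\bar\partial_J\alpha_J=0,
\]
hence $d\omega_J=0$. Since $\omega_J\to\omega_0$ in $H^l_{(\delta)}$ as $J\to J_0$, after shrinking $\mathcal{U}$ the form $\omega_J$ is positive, so
\[
h(J)(X,Y):=\omega_J(X,JY)
\]
is a K\"ahler metric for $J$ with $h(J)-h_0\in H^l_{(\delta)}$ small; every step being explicit, $\Phi\colon J\mapsto h(J)$, $\mathcal{J}^l_{(\delta)}\cap\mathcal{U}\to\mathcal{M}^l_{(\delta)}$, is smooth.

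To identify the differential, take a curve $J(t)$ with $J(0)=J_0$ and $\dot J(0)=I$ an infinitesimal complex deformation; differentiating $t\mapsto h(J(t))$ at $t=0$ and using $\gamma_{J_0}=0$, $\alpha_{J_0}=0$ and $d\omega_0=0$ gives $d\Phi_{J_0}(I)(X,Y)=\dot\omega_J(X,J_0Y)+\omega_0(X,IY)$ with $\dot\omega_J=\dot\eta+\bar\partial_{J_0}\dot\gamma+\partial_{J_0}\overline{\dot\gamma}$. A direct computation shows that the $\dot\eta$-term and the exact $\partial\bar\partial$-correction contribute no symmetric part, so that $d\Phi_{J_0}(I)$ equals the symmetric $2$-tensor $\omega_0(\cdot,I\cdot)$, which — using that $I$ is $h_0$-symmetric and anticommutes with $J_0$ — is precisely $\tfrac12\bigl(k_0(IX,J_0Y)+k_0(J_0X,IY)\bigr)$ with $k_0=h_0$, as asserted.

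The main obstacle is uniformity in $J$: the $\partial\bar\partial$-Lemma must be available for the complex structures $J$ near $J_0$, not merely for $J_0$ where it was established, with estimates locally uniform in $J$. This rests on the stability of the $\partial\bar\partial$-property under small deformations of the complex structure, which here follows by perturbing the weighted Hodge--Dolbeault theory underlying that Lemma: the associated elliptic $b$-operators have unchanged principal symbols and unchanged indicial roots at the cone, so the weight $(\delta)$ stays non-exceptional and Fredholmness persists under $J_0\rightsquigarrow J$; alternatively one may set the construction up as an implicit function theorem using only the Hodge theory at $J_0$. Carrying this through, together with the regularity/weight bookkeeping along $\partial_J,\bar\partial_J$ and the orbifold/conical dichotomy, is the technical heart of the argument, the algebraic parts (type decompositions, closedness of $\omega_J$, the computation of $d\Phi_{J_0}$) being routine once it is in place.
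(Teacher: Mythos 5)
Your construction is a genuinely different route from the paper's: you decompose the fixed form $\omega_0$ into its $J$-types and repair closedness of the $(1,1)_J$-part by a $\partial\bar\partial$-correction, whereas the paper follows Kodaira--Spencer \cite{KoSp60}: it takes the $J_t$-invariant part $\Pi^{1,1}_t\omega_0$, introduces the fourth-order elliptic operator $E_t$ (defined for \emph{any} Hermitian structure in the family), proves the weighted decomposition $\ker(d)\cap H^l_{(\delta')}=\partial_t\bar\partial_t\bigl(H^{l+2}_{(\delta'+2)}\bigr)\oplus\ker_{L^2}(E_t)$ with $\dim\ker_{L^2}(E_t)$ constant in $t$, and uses the resulting smooth family of projections $\Pi_{E_t}$, plus a $\partial_t\bar\partial_t u_t$-term, to produce the closed $(1,1)_{J_t}$-form and to arrange the formula for the differential.

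The gap in your argument is exactly at the point you flag but do not close: you invoke the $\partial\bar\partial$-Lemma \emph{at the deformed structure $J$}, but that lemma (as proved in the paper) is a statement about K\"ahler manifolds $(M,h,J)$ with conical singularities --- its proof uses the Hodge--Dolbeault identity $\Delta_\partial=\Delta_{\bar\partial}$ and a Green's operator commuting with $\partial,\bar\partial$ and their adjoints, all of which are consequences of the K\"ahler identities for a metric compatible with $J$. Since the existence of such a metric for $J$ is precisely what Proposition \ref{kahler-def} is asserting, this use is circular. Your proposed repair, that ``the principal symbols and indicial roots are unchanged, so the weight stays non-exceptional and Fredholmness persists,'' only gives Fredholm mapping properties of deformed Laplacians; it does not give the $\partial\bar\partial$-property for a merely Hermitian structure, which is not a Fredholm statement but an identity of images that fails for general Hermitian metrics. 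Making your route rigorous would require a deformation-stability theorem for the $\partial\bar\partial$-Lemma in the weighted conical setting (or an implicit-function reformulation whose linearization you would still need to invert uniformly in $J$), and this is essentially what the Kodaira--Spencer $E_t$-machinery in the paper's proof supplies: $E_t$ and $\Pi_{E_t}$ are defined without assuming $J_t$ K\"ahler, and the constancy of $\dim\ker_{L^2}(E_t)$ replaces the family $\partial\bar\partial$-Lemma. A secondary, smaller point: the claim that the $\partial\bar\partial$-correction contributes nothing to $d\Phi_{J_0}(I)$ is asserted rather than proved; it needs $\bar\partial_0\dot\alpha=0$ (which should follow from $\bar\partial I=0$ and parallelism of $\omega_0$) together with differentiable dependence of the Green's operator on $J$ --- again a family-Hodge-theory input that your outline presupposes.
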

\begin{proof}
	We adopt the strategy of Kodaira and Spencer \cite[Section 6]{KoSp60}. 
	Let $J_t$ be a family of complex structures in $\mathcal{J}^l_{(\delta)}$. Then we can define a map $\Pi^{1,1}_{t}$, given by $\Pi^{1,1}_{t}\omega(X,Y)=\frac{1}{2}(\omega(X,Y)+\omega(J_tX,J_tY))$, which is the canonical map which projects $2$-forms to $J_t$-hermitian $2$-forms.
	We use it to define $J_t$-hermitian forms $\omega_t$ by
	$\Pi^{1,1}_{t}\omega_0(X,Y)=\frac{1}{2}(\omega_0(X,Y)+\omega_0(J_tX,J_tY))$. 
	Here, 
	Let $\partial_t,\bar{\partial}_t$ the associated Dolbeault operators and $\partial_t^*,\bar{\partial}_t^*$ 
	their formal adjoints with respect to the metric $g_t(X,Y):=\omega_t(X,J_tY)$. We simplify notation in 
	\eqref{bigrading} by setting $\Lambda^{p,q}_tM := \Lambda^{p,q}_{J_t}M$. We define a forth-order, 
	self-adjoint and elliptic linear differential operator $E_t:H^l_{(\delta)}(M,\Lambda^{p,q}_tM)\to 
	H^{l-4}_{(\delta-4)}(M,\Lambda^{p,q}_tM)$ by
	\begin{align*}
	E_t=\partial_t\bar{\partial}_t\bar{\partial}^*_t\partial_t^*+\bar{\partial}^*_t\partial_t^*\partial_t\bar{\partial}_t+\bar{\partial}^*_t\partial_t\partial_t^*\bar{\partial}_t+\partial_t^*\bar{\partial}_t\bar{\partial}^*_t\partial_t
	+\bar{\partial}^*_t\bar{\partial}_t+\partial_t^*\partial_t.
	\end{align*}
	It is straightforward to see that that any $\alpha\in \ker_{L^2}(E_t)$ is bounded. Thus, integration by parts shows that 	$\partial_t\alpha=0$, $\bar{\partial}_t\alpha=0$ and $\bar{\partial}^*_t\partial_t^*\alpha=0$, i.e.\ $d\alpha=0$ and $\bar{\partial}^*_t\partial_t^*\alpha=0$ hold simultaneously. In the non-orbifold case, this follows form the fact that $m\geq 5$ in this case.
	As in \cite[Proposition 7]{KoSp60} (except that we use the appropriate weighted spaces), one now shows that
	\begin{align*}
	\ker(d)\cap H^l_{(\delta')}&(M,\Lambda^{p,q}_tM)=\\
	&\partial_t\bar{\partial}_t(H^{l+2}_{(\delta'+2)}(M,\Lambda^{p-1,q-1}_tM))\oplus \ker_{L^2}(E_t)\cap H^l_{(\delta')}(M,\Lambda^{p,q}_tM)
	\end{align*}
	is an $L^2(g_t)$ orthogonal decomposition and that $\dim\ker_{L^2}(E_t)=\dim\ker_{L^2_{\delta'}}(E_t)$ is constant for small $t$. Here, $\delta'\in [-m/2,-m/2+\epsilon)$ is chosen non-exceptional and $\epsilon>0$ is small. Thus there is a smooth family of $L^2(g_t)$-orthogonal projections $\Pi_{E_t}:L^2(M,\Lambda_t^{p,q}M)\to \ker_{L^2}(E_t)$.
	Now we define
	\begin{align*}
	\tilde{\omega}_t&:=\Pi_{E_t}\omega_t+\partial_t\bar{\partial}_tu_t
	=\Pi_{E_t}\Pi^{1,1}_t\omega_0+\partial_t\bar{\partial}_tu_t,
	\end{align*}
	where $u_t\in H^{l+2}_{(\delta+2)}(M)$ is a smooth family of functions such that $u_0=0$ which will be defined later. By construction, $\bar{\omega}_t:=\Pi_{E_t}\omega_t$ is closed and $H^l_{\delta}$-closed to $\omega_0$. Therefore, $\tilde{\omega}_t$ is also closed and differentiating at $t=0$ yields
	\begin{align*}
	\tilde{\omega}'_0=\Pi_{E_0}\omega'_0+\Pi_{E_0}'\omega_0 +\partial_0\bar{\partial}_0u'_0=\omega'_0+\Pi_{E_0}'\omega_0+ \partial_0\bar{\partial}_0u'_0.
	\end{align*}
	Because $d\tilde{\omega}_t=0$, we have $d\tilde{\omega}'_0=0$ and since $J_0'$ is an infinitesimal complex deformation, $d\omega'_0=0$ 
	which implies that  		 
	$$
	\Pi_{E_0}'\omega_0\in \ker(E_0)^{\perp}\cap\ker(d)\cap H^{l}_{(\delta')}(M,\Lambda^{1,1}_0M)
	=\partial_t\bar{\partial}_t(H^{l+2}_{(\delta'+2)}(M)).
	$$ 
	Let now $ v\in H^{l+2}_{(\delta'+2)}(M) $ so that $\partial_0\bar{\partial}_0v=\Pi_{E_0}'\omega_0.$
	Then, define $u_t\in H^{l+2}_{(\delta'+2)}(M) $ by $$u_t:=tv.$$
	By this choice, $
	\tilde{\omega}_0'=\omega'_0$
	and the assertion for $d\Phi_{J_0}(J'_0)=\tilde{h}_0'$ follows immediately.
	Finally, $\tilde{h}_t(X,Y):=\tilde{\omega}_t(X,J_tY)$ is a Riemannian metric for $t$ small enough and it is K\"{a}hler with respect to $J_t$. Moreover, as in the proof of \cite[Proposition 8.4.4]{Joy00}, the fact that $\Pi_{E_0}'\omega_0\in H^k_{(\delta)}$ implies that $u_t\in H^{l+2}_{(\delta+2)}(M)$. Thus $\tilde{h}_t$ is also $H^l_{(\delta)}$-close to $h_0$.
\end{proof}
\begin{remark}
	Let $J_t$ is a smooth family of complex structures in $\mathcal{J}^l_{(\delta)}\cap \mathcal{U}$ and $h_t=\Phi(J_t)$. Then the construction in the proof above shows that $I=J'_0$ and $k=h'_0$ are related by
	\begin{align*}
	k(JX,Y)=-\frac{1}{2}(h(X,IY)+h(IX,Y)).
	\end{align*}
\end{remark}

\begin{lem}\label{tttensors}
	Let $(M,h)$ be a Ricci-flat manifold with a conical singularity and $k\in\ker(\Delta_L)\cap \mathrm{span}(h)^{\perp}$. Then, $tr (k)=0$ and $div (k)=0$.
\end{lem}
\begin{proof}
	First, recall that $\Delta_L$ preserves the splitting $S=S_0\oplus S_1$ and
	acts as the Laplacian on sections of $S^1$, which consists of the pure trace symmetric two-tensors. Therefore, if $k\in \ker(\Delta_L)$, its pure trace part has to be a multiple of the metric so that $k\in\ker(\Delta_L)\cap \mathrm{span}(h)^{\perp}$ implies $tr (k)=0$.
	\medskip
	
	A straightforward calculations show that $div\circ \Delta_L=\Delta\circ div$, where $\Delta$
	is the connection Laplacian of vector fields. Therefore, $div \, h\in \ker (\Delta)=\left\{0\right\}$ as 
	$M$ does not admit parallel vector fields due to Lemma \ref{parallelvf}.
\end{proof}

\begin{thm}\label{CY-ALE-Int}
	Let $(M,h_0,J_0)$ be a Ricci-flat K\"{a}hler manifold where the cross section is either strictly tangentially stable or a space form. Then for any $k\in\ker(\Delta_L)$, there exists a smooth family of metrics $h(t)\in \cH$ with $h(0)=h_0$ and $h'_0=k$, satisfying $\ric_{h(t)}=0$. Each metric $h(t)$ is K\"{a}hler with respect to some complex structure $J(t)$. In particular, $h_0$ is integrable.
\end{thm}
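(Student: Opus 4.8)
The strategy is the classical one for the compact case, due to Koiso and exposed in \cite[Chap.~12]{Besse}, carried out in the weighted function spaces set up in the previous sections: build a Kuranishi-type family of Ricci-flat K\"ahler metrics near $h_0$ whose tangent space at $h_0$ is all of $\ker(\Delta_L)$. First I would dispose of the pure-trace directions: $\Delta_L$ preserves $S=S_0\oplus S_1$ and acts on $S_1$ as the scalar Laplacian, so the pure-trace part of $\ker(\Delta_L)$ is $\mathbb{R}\cdot h_0$, and there one takes $h(t)=(1+ct)h_0$, which is Ricci-flat and K\"ahler with respect to $J_0$. It remains to treat $k\in\ker(\Delta_L)\cap\mathrm{span}(h_0)^{\perp}$, which by Lemma \ref{tttensors} is a $TT$-tensor. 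Split $k=k_H+k_A$ into its Hermitian and anti-Hermitian parts, both of which lie in $\ker(\Delta_L)$; under \eqref{herm+antiherm} the anti-Hermitian part corresponds to $I:=I(k_A)\in\ker(\Delta_C)$, a harmonic Beltrami differential (hence $\bar{\partial}I=\bar{\partial}^*I=0$, an infinitesimal complex deformation), and the Hermitian part to $\kappa:=\kappa(k_H)\in\ker(\Delta_H)$, a harmonic (primitive) $(1,1)$-form, which is closed and whose class $[\kappa]\in H^{1,1}$ is nonzero unless $k_H=0$, since a harmonic exact form vanishes. These two assignments identify $\ker(\Delta_L)\cap\mathrm{span}(h_0)^{\perp}$ with $\ker(\Delta_C)\oplus\ker(\Delta_H)$.

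Next I would integrate these infinitesimal data. Theorem \ref{cpx-str-int} turns $I$ into a smooth family of complex structures $J_s$ with $J_0$ at $s=0$, $J'_0=I$, and $J_s-J_0$ in the appropriate weighted (resp.\ unweighted, in the orbifold case) Sobolev space; varying $I$ over a neighbourhood of $0$ in $\ker(\Delta_C)$ produces the Kuranishi family $\mathcal{J}$. Proposition \ref{kahler-def} assigns to each $J\in\mathcal{J}$ a metric $\Phi(J)$ that is K\"ahler for $J$ and close to $h_0$, with differential $d\Phi_{J_0}(I)$ equal to the anti-Hermitian tensor attached to $I$, i.e.\ $d\Phi_{J_0}(I)=k_A$. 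To realize the Hermitian directions as well, I would additionally move the K\"ahler class: for small parameters one can track a K\"ahler class near $[\omega_0]$ in the direction $[\kappa]$ on $(M,J_s)$, and the derivative, in the direction $[\kappa]$, of the Ricci-flat K\"ahler metric representing it is the harmonic representative of $[\kappa]$, namely $\kappa$, hence $k_H$. Passing from K\"ahler to Ricci-flat K\"ahler metrics is the Calabi--Yau step: since $c_1=0$, each fixed K\"ahler class on each $(M,J_s)$ contains, near the reference metric, a unique Ricci-flat K\"ahler metric, found by solving the complex Monge--Amp\`ere equation, whose linearization is the scalar Laplacian — an isomorphism on the relevant complement by Theorem \ref{thm-iso} — so that a parameter-dependent implicit function theorem yields the correction smoothly in the complex structure and K\"ahler class, within the weighted space. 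This gives a smooth map $\Psi$ from a neighbourhood of $0$ in $\ker(\Delta_C)\oplus\ker(\Delta_H)$ into the space of Ricci-flat K\"ahler metrics whose differential at $0$ is — by the two derivative computations above, noting that the Monge--Amp\`ere correction contributes only a $\partial\bar{\partial}$-exact, hence Hermitian and already counted, term — the identification with $\ker(\Delta_L)\cap\mathrm{span}(h_0)^{\perp}$. Composing the curve $t\mapsto(tI,t\kappa)$ with $\Psi$ and adding the rescaling direction gives the required $h(t)$, with $h(0)=h_0$, $h'_0=k$, $\ric_{h(t)}=0$ and each $h(t)$ K\"ahler; letting $\cF$ be the image of $\Psi$ together with the scalings yields $T_{h_0}\cF=\ker(\Delta_L)$, so $h_0$ is integrable in the sense of Definition \ref{integrability}.

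Finally, $h(t)\in\cH$ follows from the Sobolev embeddings of Theorem \ref{embedding-theorem}: the complex-structure and Monge--Amp\`ere deformations lie in $H^{l}_{(\delta)}$ with $\delta>0$, so for $l$ taken large enough they embed into the weighted H\"older spaces constituting $\cH$ (in the orbifold case one works with unweighted spaces, lifting locally to a smooth finite cover and using interior elliptic regularity), and the hypothesis that the cross section is strictly tangentially stable or a space form is exactly what makes $\cH$ and this entire analytic framework available. The step I expect to be the main obstacle is precisely this Ricci-flat (Monge--Amp\`ere) correction together with the bookkeeping that glues the complex-structure and K\"ahler-class deformations: one must solve the complex Monge--Amp\`ere equation on the singular space with smooth dependence on $(J_s,[\kappa])$ and with the solution controlled in $\cH$, and one must verify that the differential of $\Psi$ is an \emph{isomorphism} onto $\ker(\Delta_L)\cap\mathrm{span}(h_0)^{\perp}$ — i.e.\ that no Hermitian kernel direction is absorbed by the (also Hermitian) Monge--Amp\`ere correction — rather than merely an injection. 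Both points are routine in the compact case but require the weighted elliptic theory developed above, and the orbifold and genuinely conical cases have to be run in parallel with their respective function spaces.
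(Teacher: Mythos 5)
Your proposal follows essentially the same route as the paper: split off the pure-trace (scaling) direction, decompose $k$ into Hermitian and anti-Hermitian parts giving a harmonic $(1,1)$-class $\kappa$ and an infinitesimal complex deformation $I$, integrate $I$ via Theorem \ref{cpx-str-int}, produce compatible K\"ahler metrics via Proposition \ref{kahler-def}, move the K\"ahler class by the harmonic projection of $t\kappa$, and correct to Ricci-flat by an implicit-function-theorem/Monge--Amp\`ere argument whose linearization $\partial\bar{\partial}\Delta$ is an isomorphism on mean-zero functions in the weighted (resp.\ unweighted, orbifold) Sobolev spaces. The technical points you flag (smooth dependence on $(J,\kappa)$, the tangent map being the identity on $\ker(\Delta_L)\cap\mathrm{span}(h_0)^{\perp}$, and the constancy of $\dim\mathcal{H}^{1,1}_{J,(\delta)}$ needed for the projection $pr_J$) are exactly the ingredients the paper supplies, so the proposal is correct in approach.
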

\begin{proof}
	We proceed similarly as in \cite[Chapter 12]{Besse}. Assume we are given a 
	complex structure $J$ close to $J_0$. Assume we have a $(1,1)$-form $\omega$ (the bi-grading $(1,1)$ is with respect to $J$)
	which is $H^{l}_{(\delta)}$-close to $\omega_0$.
Here, $\delta\in (0,\gamma)$ is a non-exceptional weight of the Hodge-Dolbeault operator and $\gamma$ is the weight for $\cH$, see \eqref{H-space-2}.
	 Then we seek a Ricci-flat metric in the cohomology class $[\omega]\in \mathcal{H}^{1,1}_J(M)$. By the $\partial\bar{\partial}$-lemma, there exists a function $f_{\omega}\in H^{l}_{(\delta)}(M)$, such that $i\partial\bar{\partial}f_{\omega}$ is the Ricci form of $\omega$. If $\bar{\omega}\in [\omega]$ and $\bar{\omega}-\omega\in H^{l}(M,\Lambda^{1,1}_{J}M)$, then there is a $u\in H^{l+2}_{(\delta+2)}(M)$ with $\int_M u\text{ } \omega^n=0$ such that $\bar{\omega}=\omega+i\cdot \partial\bar{\partial}u$. Ricci-flatness of $\bar{\omega}$ is equivalent to the equation
	\begin{align*}
	\partial\bar{\partial}f_{\omega}=\partial\bar{\partial}\log\frac{(\omega+i\partial\bar{\partial}u)^n}{\omega^n}=:\partial\bar{\partial}Cal(\omega,u).
	\end{align*}
	Let $\mathcal{J}^l_{(\delta)}$ be as above and $\Delta_J$ the Dolbeaut Laplacian of $J$ and the metric $h(J)$. Then all the $(L^2_{(\delta)})$-cohomologies $\mathcal{H}^{1,1}_{J,(\delta)}(M)=\ker_{L^2_{(\delta)}}(\Delta_J)\cap L^2_{(\delta)}(M,\Lambda^{1,1}_JM)$ are isomorphic for $J\in\mathcal{J}^l_{(\delta)}$ if we  $\mathcal{J}^l_{(\delta)}$ is small enough:
	We have $\mathcal{H}^2_{(\delta)}(M)=\mathcal{H}^{2,0}_{J,(\delta)}(M)\oplus \mathcal{H}^{1,1}_{J,(\delta)}(M)\oplus \mathcal{H}^{0,2}_{J,(\delta)}(M)$. The left hand side is independent of $J$ and the metric $g(J)$ provided by Proposition \ref{kahler-def}. The spaces on the right hand side are kernels of $J$-dependant elliptic operators
	whose dimension depends upper-semicontinuously on $J$. However the sum of the dimensions is constant
	and so the dimensions must be constant as well. \medskip
	
	Thus, there is a natural projection $pr_{J}:\ker(\Delta_{J_0})\to\ker(\Delta_{J})$ which is an isomorphism.
	Let $H^l_{h_0,(\delta)}(M)$ be the space of $H^l_{(\delta)}$-functions whose integral with respect to $h_0$ vanishes.
	We now want to apply the implicit function theorem to the map
	\begin{align*}
	G: \mathcal{J}^l_{(\delta)}\times \mathscr{H}^{1,1}_{J_0,(\delta)}(M)\times H^{l+2}_{h_0,(\delta+2)}(M)&\to \partial\bar{\partial }(H^{l}_{h_0,(\delta)}(M))\\
	(J,\kappa,u)&\mapsto \partial\bar{\partial}Cal(\omega(J)+pr_J(\kappa),u)-\partial\bar{\partial}f_{\omega(J)+pr_J(\kappa)}
\end{align*}
	where $\omega(J)(X,Y):=h(J)(JX,Y)$ and $h(J)$ is the metric constructed in Proposition \ref{kahler-def}.
	We have $G(J_0,0,0)=0$ and the differential restricted to the third component is just given by $\partial\bar{\partial}\Delta:H^{l+2}_{h_0,(\delta+2)}(M)\to  \partial\bar{\partial}(H^{l}_{h_0,(\delta)}(M))$ \cite[p.\ 328]{Besse}, which is an isomorphism since we restrict to functions with vanishing integral.
	To see this, we argue as follows: The injectivitiy follows from the fact that $\Delta$ is injective on $H^{l+2}_{h_0,(\delta+2)}(M)$. Now let $v\in H^{l}_{h_0,(\delta)}(M)$ and $w=\partial\bar{\partial}\in H^{l-2}_{(\delta-2)}(\Delta^{1,1}M)$ and there exists $\tilde{w}\in H^{l}_{(\delta)}(\Delta^{1,1}M)$ such that $\Delta\tilde{w}=w$ where here, $\Delta$ is the Hodge-Dolbeaut Laplacian. If we apply the $\partial\bar{\partial}$-Lemma, we get $u\in H^{l+2}_{h_0,(\delta+2)}(M)$ such that $\partial\bar{\partial}u=\tilde{w}$ and finally concluding $\partial\bar{\partial}\Delta u=\partial\bar{\partial}v$.
	 Therefore we find a map $\Psi$ defined on a small neighbourhood of $(J_0,0)$ such that $G(J,\kappa, \Psi(J,\kappa))=0$.
	\medskip
	
	Let now $k\in\ker(\Delta_{L,h_0})\cap \mathrm{span}(h_0)^{\perp}$ and let $k=k_H+k_A$ its decomposition into a $J_0$-hermitian and a $J_0$-antihermitian part. We want to show that $k$ is tangent to a family of Ricci-flat metrics.
	By the definition of $\cH$ and by elliptic regularity, $k\in H^l_{(\delta)}(M,S)$ for every $l\in\mathbb{N}$ and for some non-exceptional $\delta>0$. We can define $I\in H^l_{(\delta)}(M,\Lambda^{0,1}M\otimes T^{1,0}M)$ and $\kappa\in H^l_{(\delta)}(M,\Lambda^{1,1}_{J_0})(M)$ as in \eqref{herm+antiherm}.
Because $\Delta_CI=0$, $\bar{\partial} I=0$ and $\bar{\partial}^*I=0$. In addition $\kappa\in \mathcal{H}^{1,1}_{J_0}(M)$.
	Let $J(t)=\Theta(t\cdot I)$ be the family of complex structures tangent to $I$ provided by Theorem \ref{cpx-str-int} and $\tilde{\omega}(t)=\tilde{\Phi}(J(t))$ be the associated family of K\"{a}hler forms constructed in Proposition \ref{herm+antiherm}.
	The map $\Psi$ constructed above defines a family of forms $\omega(t)=\tilde{\omega}(t)+pr_{J(t)}(t\cdot \kappa)+i\partial\bar{\partial} \Psi(\tilde{\omega}(t),t\cdot\kappa)$ and an associated family of Ricci-flat metrics $\tilde{h}(t)(X,Y)=\omega(t)(X,J(t)Y)$. 
	This construction provides a smooth map 
	\begin{align*}\Xi:\ker(\Delta_L)\cap \mathrm{span}(h_0)^{\perp}=\ker(\Delta_L)\cap  H^k_{(\delta)}\supset \mathcal{W}\to\mathscr{H},
	\end{align*}
	whose tangent map is the identity. Therefore, its image forms a smooth finite-dimensional manifold of Ricci-flat metrics which are $H^k_{(\delta)}$-close to $h_0$.
	Moreover, by Sobolev embedding for weighted spaces, these metrics are also $\mathscr{H}$-close to $h_0$ but with a possible different parameter $\gamma$.	
	To extend $\Xi$ to a map
	\begin{align*}
	\widetilde{\Xi}:\ker(\Delta_L)\supset \widetilde{\mathcal{W}}\to \mathscr{H},
	\end{align*}
	we just let elements in $\mathrm{span}(g)$ act by multiplication with a constant. By construction, 
	\begin{align*}\mathrm{im}(\widetilde{\Xi})=:\cF
	\end{align*}
    is a finite-dimensional manifold of Ricci-flat metrics close to $h_0$ such that $T_{h_0}\mathscr{F}=\ker(\Delta_L)$. This finishes the proof of the theorem.
\end{proof}

\section{Long time existence and convergence of the Ricci de Turck flow}\label{long-section}

Consider a compact manifold $(M,h_0)$ with an isolated
conical singularity that satisfies the following three assumptions
\begin{enumerate}
\item[(i)] $(M,h_0)$ is tangentially stable in the sense of Definition \ref{tangential-stability-def}, 
\item[(ii)] $(M,h_0)$ is linearly stable in the sense of Definition \ref{linear-stability}, 
\item[(iii)] $(M,h_0)$ is integrable in the sense of Definition \ref{integrability}.
\end{enumerate}
By definition, $\ker$ is a smooth vector bundle over $\cF \cap \cV$, where 
$\cV \subset \cH$ is a sufficiently small open neighborhood and each 
fibre is equipped with an inner product induced by $L^2(\textup{Sym}^2(T^*M),h)$.
Hence there exists a local orthonormal frame $\{e_1, \ldots, e_d\}$, which can be assumed
to be global by taking $\cV$ sufficiently small. Here, $d$ denotes the rank of the vector
bundle $\ker$. We define the projection 
\begin{equation}
\begin{split}
\Pi_h : (\ker \Delta_{L, h_0})^\perp \cap \cH \to (\ker \Delta_{L, h})^\perp \cap \cH,
\\ g \mapsto g-\sum_{i=1}^d \langle g,e_i(h) \rangle \cdot e_i(h),
\end{split}
\end{equation}
where the inner product $\langle g, e_i(h)\rangle$ is taken with respect to $L^2(\textup{Sym}^2(T^*M),h)$.
We now employ this projection to define
\begin{equation}
\begin{split}
\Phi: &(\cF \cap \cV) \times \left((\ker \Delta_{L, h})^\perp \cap \cH\right) \rightarrow h_0 + \cH, 
\\ &(h,g) \mapsto h + \Pi_h (g).
\end{split}
\end{equation}
The differential of $\Phi$ at $(h_0,0)$ acts as identity between the following spaces
(recall $T_{h_0} \cF = \ker \Delta_{L, h_0}$ by integrability of $h_0$)
\begin{equation*}
\begin{split}
d \Phi_{(h_0,0)}: &T_{h_0} \cF \oplus \left((\ker \Delta_{L, h_0})^\perp \cap \cH\right)
\to \cH = T_{h_0} \cF \oplus \left((\ker \Delta_{L, h_0})^\perp \cap \cH\right), 
\end{split}
\end{equation*}
Therefore, by the implicit function theorem on Banach manifolds, there exist sufficiently
small neighborhoods $\cV', \cV'', \cV''' \subset \cH$, such that 
\begin{equation}
\Phi: (\cF \cap \cV') \times \left((\ker \Delta_{L, h})^\perp \cap \cV''\right) 
\rightarrow h_0 + \cV''' 
\end{equation}
is a diffeomorphism. In particular, we may define
\begin{equation}
\Pi: (h_0 + \cV''')  \to \cF \cap \cV', \quad \Pi := \textup{proj}_1 \circ \Phi^{-1}. 
\end{equation}
By construction $(g- \Pi(g)) \in (\ker \Delta_{L, \Pi(g)})^\perp$.
In order to simplify notation, consider the small open neighborhood 
$\cU = B_R(h_0) \subset h_0 + \cV'$ of $h_0 \in \cF$, so that the projection $\Pi$ acts as
$\Pi: \cU \to \cF$, such that for any $g \in \cU$, $(g-\Pi(g)) \in (\ker \Delta_{L,\Pi(g)})^\perp$.
By the implicit function theorem, $\Pi$ is smooth and maps to a small open 
neighborhood of $h_0$ in $\cF$. In particular, by differentiability of $\Pi$, there
exists a uniform constant $c>0$ such that\footnote{We employ the norms of the 
Banach space $\cH$, unless stated otherwise.}
\begin{equation}\label{auxiliary1}
		\forall \, g \in \cU, \, h \in \cU \cap \cF : \| \Pi (g) - h \| \leq c \|g - h\|.
	\end{equation}

We now turn to the estimates in Corollary \ref{exponential-heat-kernel-estimate}
and their (uniform) dependence on $h \in \cF \cap \cU$. By Theorem \ref{uniformboundlambda}, 
the first non-zero eigenvalue of $\Delta_{L,h}$ is uniformly bounded from below by $\alpha>0$
for $h \in \cF \cap \cU$. The constant $c>0$ in Corollary \ref{exponential-heat-kernel-estimate} 
can similarly be chosen locally uniformly constant in $h \in \cF \cap \cU$, and hence for any fixed
$t_0>0$ and $t\geq t_0$
\begin{equation}\label{auxiliary2}
		\forall \, h \in \cU \cap \cF : \| e^{-\Delta_{L,h}}\| \leq c,
		\quad \| e^{-\Delta_{L,h}} \restriction (\ker \Delta_{L,h})^\perp\| \leq ce^{-\alpha t}.
	\end{equation}
We can now establish the following proposition. 

\begin{prop}\label{first-step-existence}
	For any $N \in \N$, there exists $\epsilon > 0$ sufficiently small and 
	$T = T(\epsilon,N)$ (depending also on $\alpha$ and $C$ from above) such that 
	for any $g\in \cU$ with $h = \Pi(g)\in \cU \cap \cF$ and $\| g - h\| \leq \epsilon$, 
	the Ricci-de-Turck flow starting at $g$, with the background metric $h$, 
	exists for time $T>0$ and
	\begin{equation}\label{estimate-after-T}
		\| g(T) - h \| \leq \frac{\epsilon}{N}.
	\end{equation}
\end{prop}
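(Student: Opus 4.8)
The strategy is a standard short-time contraction-mapping argument for the Ricci–de Turck flow, run in the Banach space $\cH = \cH^{k,\A}_\gamma(M\times[0,T],S)$, but tracking constants carefully so that the smallness of $\|g(T)-h\|$ is quantitative. First I would write the Ricci–de Turck flow with background metric $h$ as a perturbation equation. Setting $g(t) = h + u(t)$ with $u(0) = g-h =: u_0$, the flow becomes
\begin{equation*}
\partial_t u + \Delta_{L,h}\, u = Q_h(u), \qquad u(0) = u_0,
\end{equation*}
where $\Delta_{L,h}$ is the Friedrichs extension of the Lichnerowicz Laplacian of $h$ (here one uses that $h$ is Ricci-flat, so the linearization of the de Turck operator is exactly $\tfrac12\Delta_{L,h}$ up to a harmless constant, cf.\ \eqref{Ric-linearization}), and $Q_h(u)$ collects all terms that are at least quadratic in $u$ and its first two covariant derivatives, with coefficients depending smoothly on $h$ and $u$. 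The key structural point is that $Q_h(u)$ has the mapping property $Q_h : \cH^{k+2,\A}_{\gamma} \to \cH^{k,\A}_{-2+\gamma}$ with a quadratic estimate $\|Q_h(u) - Q_h(v)\| \le C(\|u\| + \|v\|)\,\|u-v\|$ on small balls, uniformly in $h \in \cF\cap\cU$; this follows from the multiplication properties of the weighted Hölder spaces and the fact that $Q_h$ loses two derivatives and two orders of weight, exactly matched by the smoothing \eqref{mapping1}, \eqref{mapping2} of the heat operator.

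Next I would set up the fixed-point map. Using Duhamel's principle and the heat operator $e^{-t\Delta_{L,h}}$ (identified with the fundamental solution $H_L$ by Theorem \ref{discrete-thm}), a solution is a fixed point of
\begin{equation*}
\Psi(u)(t) = e^{-t\Delta_{L,h}} u_0 + \int_0^t e^{-(t-s)\Delta_{L,h}} Q_h(u(s))\, ds.
\end{equation*}
By Corollary \ref{exponential-heat-kernel-estimate} and the remark following \eqref{mapping1}, the linear term is bounded in $\cH^{k+2,\A}_\gamma(M\times[0,T])$ by $c\|u_0\| \le c\epsilon$ uniformly in $T$ on bounded time intervals, and the Duhamel term gains the missing weight/regularity from the convolution, contributing a factor that is $O(T^\beta)$ or at least bounded by a fixed constant times $T$ times the sup of $\|Q_h(u)\|$. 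Hence on the ball $B_{2c\epsilon} \subset \cH^{k+2,\A}_\gamma(M\times[0,T])$, for $\epsilon$ small and $T = T(\epsilon)$ small, $\Psi$ maps the ball into itself and is a contraction — this gives existence and uniqueness of the flow on $[0,T]$ with $\sup_{t\in[0,T]}\|g(t)-h\| \le 2c\epsilon$, and the constants can be taken uniform in $h \in \cF\cap\cU$ because of the uniform lower bound $\alpha$ on $\lambda_1$ (Theorem \ref{uniformboundlambda}) and the uniform heat-kernel bounds \eqref{auxiliary2}.

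Finally, to get the decay estimate \eqref{estimate-after-T} I would split $u_0$ with respect to the $L^2(h)$-decomposition $\cH = \ker\Delta_{L,h} \oplus (\ker\Delta_{L,h})^\perp$. However, since $h = \Pi(g)$, by construction of $\Pi$ we already have $g - h = u_0 \in (\ker\Delta_{L,h})^\perp$, so the \emph{linear} evolution $e^{-t\Delta_{L,h}} u_0$ decays like $c e^{-\alpha t}\|u_0\| \le c e^{-\alpha t}\epsilon$ by \eqref{auxiliary2}; the Duhamel term is bounded by $C T \sup_{[0,T]}\|Q_h(u)\| \le C T (c\epsilon)^2$. Therefore
\begin{equation*}
\|g(T) - h\| \le c e^{-\alpha T}\epsilon + C' T \epsilon^2.
\end{equation*}
Given $N$, one first chooses $T$ large enough that $c e^{-\alpha T} \le \tfrac1{2N}$, and then chooses $\epsilon$ small enough (depending on $T$, hence on $N$, $\alpha$, $C'$) that $C' T \epsilon \le \tfrac1{2N}$ and that $\epsilon$ is below the threshold for short-time existence on $[0,T]$ — here one may need to iterate the short-time existence step finitely many times to reach the fixed $T$, which is fine since the ball radius stays controlled. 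This yields $\|g(T)-h\| \le \epsilon/N$.

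\textbf{Main obstacle.} The delicate point is the interplay between two competing requirements: short-time existence forces $T$ \emph{small} (with $T$ depending on $\epsilon$), while the exponential decay $ce^{-\alpha T} < 1/N$ forces $T$ \emph{large}. Reconciling these requires either rerunning the short-time argument on consecutive intervals up to the desired $T$ while keeping the norm from growing — which works precisely because $u_0 \perp \ker\Delta_{L,h}$ makes the linear part a strict contraction factor $<1$ once $T$ is moderately large — or a bootstrap where one first establishes existence up to time $T$ using a smallness assumption that is then improved. Getting all the constants ($c$ from \eqref{auxiliary1}--\eqref{auxiliary2}, the contraction constant, the Duhamel constant) to be genuinely uniform over $h \in \cF\cap\cU$, so that the same $\epsilon, T$ work for every admissible background metric, is the technical heart of the argument and relies essentially on Theorem \ref{uniformboundlambda} and Corollary \ref{exponential-heat-kernel-estimate}.
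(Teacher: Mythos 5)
Your proposal follows essentially the same route as the paper: write $g(t)-h$ as a fixed point of the Duhamel map $\w \mapsto e^{-t\Delta_{L,h}}\ast Q(\w) + e^{-t\Delta_{L,h}}[g-h]$, use the quadratic estimates on the nonlinearity and the uniform bounds \eqref{auxiliary2}, exploit that $g-h=g-\Pi(g)\in(\ker\Delta_{L,h})^\perp$ so the linear term decays like $c\,e^{-\alpha t}\epsilon$, and conclude $\|g(T)-h\|\le c\,e^{-\alpha T}\epsilon + C'T\epsilon^2\le \epsilon/N$. The one place where you diverge is in how the ``main obstacle'' (existence wants $T$ small, decay wants $T$ large) is resolved, and here the paper's resolution is cleaner than either of your two suggested fallbacks: in the contraction argument the self-map and contraction conditions only involve the \emph{product} of the existence time with the ball radius $\mu\sim\epsilon$ (in the paper, $Tc\tilde{c}\mu^2\le\mu/4$ and $2Tc\tilde{c}\mu\le 1/2$ with $\epsilon=\mu/(2c)$), so one may take $T:=\frac{1}{4\mu c\tilde{c}}\min\{\frac{1}{cN},1\}$, i.e.\ the guaranteed existence time scales like $1/\epsilon$ and automatically becomes large as $\epsilon\to 0$; shrinking $\mu$ then makes $c\,e^{-\alpha T}<\frac{1}{2N}$, and a single application of the Banach fixed point theorem on $[0,T]$ suffices, with no iteration and no bootstrap. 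Be warned that your iteration fallback, as stated, has a genuine subtlety: after restarting at time $kT_1$ (with the \emph{same} background $h$) the new initial datum $g(kT_1)-h$ is no longer orthogonal to $\ker\Delta_{L,h}$, since the Duhamel term can produce a kernel component, so the claimed ``strict contraction factor $<1$'' for the linear part would require additional bookkeeping of the (quadratically small) kernel drift; the paper sidesteps this entirely inside the proposition, reserving the change of projection/reference metric for the outer iteration in the subsequent corollary.
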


\begin{proof}
Consider the Ricci-de-Turck-flow $g(t)$ starting at $g$, with the background metric $h$.
Then, as worked out by the second named author in \cite{Ver-Ricci}, $g(t)-h$ is a fixed point of
\begin{equation*}
	\Phi_t \w := e^{-t \Delta_{L,h}} \ast Q_2(\w) + e^{-t \Delta_{L,h}} [g - h],
\end{equation*}
where $\ast$ indicates convolution in time, and $x^2 Q_2(\w)$ is a bounded quadratic
combination of $\w, V\w$ and $V \, V' \w$ for some $V, V' \in \V_b$. Consider
\begin{equation*}
	Z_{\mu,T} := \{\w \in \cH \equiv \cH^{k, \alpha}_{\gamma} (M\times [0,T], S) \mid \|\w\| \leq \mu\}.
\end{equation*}
For any $\w \in Z_{\mu,T}$, the quadratic expression $Q_2(\w)$ may be estimated as
\begin{equation}
\| Q_2(\w) \| \leq \tilde{c} \mu^2, \quad \| Q_2(\w) -Q_2(\w') \| \leq 2 \tilde{c} \mu \Vert \w-\w' \Vert,
\end{equation}
for some $\tilde{c}>0$. By \eqref{auxiliary2} we conclude,
assuming $\|g-h\| \leq \epsilon$, that
\begin{equation}
\begin{split}
	&\Vert \Phi_t (\w) \Vert \leq t \cdot c \cdot \tilde{c} \cdot \mu^2 + c e^{-\alpha t} \epsilon, \\
	&\Vert \Phi_t (\w) - \Phi_t(\w') \Vert \leq t \cdot c \cdot \tilde{c} \cdot 2\mu \Vert \w-\w' \Vert.
\end{split}
\end{equation}
The constants $c, \tilde{c}, \alpha > 0$ can be chosen uniformly for all metrics $h \in \cU \cap \cF$.  
Given $N\in \N$ and the constants $c, \tilde{c}>0$, we set for any $\mu > 0$
\begin{align*}
T := \frac{1}{4 \mu c \tilde{c}} \cdot \min \left\{\frac{1}{cN};1 \right\}, \quad
\epsilon := \frac{\mu}{2c}.
\end{align*}
The choice of $T$ and $\epsilon$ yields the estimates
\begin{align*}
T c \tilde{c} \mu^2 \leq  \frac{\mu}{4}, \quad 
c \epsilon \leq  \frac{\mu}{2}, \quad 
2 T c \tilde{c} \cdot \mu \leq \frac{1}{2}.
\end{align*}
Consequently, $\Phi_t$ is a contracting self-map on $Z_{\mu,T}$
and hence by Banach fixed point theorem admits a fixed point $(g(\cdot) - h) \in Z_{\mu,T}$
for any $\mu >0$. Now choose $\mu > 0$ sufficiently small such that
\begin{equation*}
	c e^{-\alpha T} = c \exp \left({- \frac{\alpha}{4 \mu c \tilde{c} 
	\cdot \min (\frac{1}{CN},1)}}\right) < \frac{1}{2N}.
\end{equation*}
Note also that by construction
\begin{equation*}
	T c \tilde{c} \mu^2 = \frac{1}{4 c \tilde{c} \mu} \cdot \min 
	\left\{\frac{1}{cN},1 \right\} \cdot c \tilde{c} \mu^2
	= \frac{\mu}{4} \cdot \min \left\{\frac{1}{cN},1 \right\} \leq \frac{\mu}{4cN} = \frac{\epsilon}{2N}.
\end{equation*}
Hence we may estimate the norm of the fixed point at time $T$
\begin{equation*}
	\Vert g(T)-h \Vert = 
	\Vert \Phi_T(g(t)-h) \Vert \le T \cdot c \tilde{c} \cdot \mu^2 + c e^{-\alpha T} \epsilon\\
	\le \frac{\epsilon}{2N} + \frac{\epsilon}{2N} = \frac{\epsilon}{N}.
\end{equation*}
\end{proof}

Our idea for the proof of long time existence and convergence is now to 
restart the Ricci de Turck flow at $g(T)$ with $\Pi g(T)$ as the background metric.
Thus, along the flow we change the de Turck vector field and the corresponding diffeomorphism 
at each step. 

\begin{cor}
Consider the open neighborhood $\cU = B_R(h_0) \subset \cH$ of $h_0 \in \cF$.
Consider $(N,\epsilon)$ as in Proposition \ref{first-step-existence}, such that additionally, 
$\epsilon < \frac{R}{2}$ and
	\begin{equation}\label{R-bound}
		\frac{\epsilon c}{N} \sum\limits_{j=0}^{\infty} \left(\frac{c+1}{N}\right)^j < \frac{R}{2}.
	\end{equation}
Then for any $g_0\in \cU$ with $h_0 = \Pi(g_0)\in \cU \cap \cF$ and $\| g_0 - h_0\| \leq \epsilon$,
there exists a Ricci de Turck flow, starting at $g_0$, with a change of reference metric at discrete 
times, converging to $g^* \in \cF \cap \cU$ at infinite time.
\end{cor}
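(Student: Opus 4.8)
The plan is to run the Ricci-de-Turck flow in a sequence of segments of geometrically shrinking size, restarting at the end of each segment with a new reference metric obtained by projecting back to $\cF$ via $\Pi$. Write $c$ for the common constant in \eqref{auxiliary1}--\eqref{auxiliary2}, so that $N>c+1$ is forced by \eqref{R-bound}, and set $\epsilon_j:=\left(\frac{c+1}{N}\right)^{j}\epsilon$. Starting from $g_0$ with $h_0=\Pi(g_0)$ and $\|g_0-h_0\|\leq\epsilon=\epsilon_0$, I would inductively apply Proposition \ref{first-step-existence} with scale $\epsilon_j$ (admissible since $\epsilon_j\leq\epsilon$) to the metric $g_j$ with reference metric $h_j:=\Pi(g_j)\in\cF\cap\cU$: this produces a Ricci-de-Turck flow on an interval of length $T_j:=T(\epsilon_j,N)$, starting at $g_j$ and terminating at a metric $g_{j+1}$ with $\|g_{j+1}-h_j\|\leq\epsilon_j/N$. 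Setting $h_{j+1}:=\Pi(g_{j+1})$, $t_{j+1}:=t_j+T_j$ (with $t_0:=0$) and concatenating these segments produces a single continuous curve $g(t)$ defined for $t\in[0,\sum_j T_j)$, whose governing de Turck vector field is updated at each $t_j$ to the one associated with $h_j$.

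The induction closes because \eqref{auxiliary1}, applied with $h=h_j\in\cF$, gives $\|h_{j+1}-h_j\|=\|\Pi(g_{j+1})-h_j\|\leq c\|g_{j+1}-h_j\|\leq c\epsilon_j/N$, hence $\|g_{j+1}-h_{j+1}\|\leq\|g_{j+1}-h_j\|+\|h_j-h_{j+1}\|\leq(1+c)\epsilon_j/N=\epsilon_{j+1}$, which is exactly the hypothesis needed to re-apply Proposition \ref{first-step-existence} at the next step. Telescoping, $\sum_{j\geq0}\|h_{j+1}-h_j\|\leq\frac{c\epsilon}{N}\sum_{j\geq0}\left(\frac{c+1}{N}\right)^{j}<R/2$ by \eqref{R-bound}, so the $h_j$ remain in $B_{R/2}(h_0)$ and, forming a Cauchy sequence in the finite-dimensional manifold $\cF\cap\cU$ (Theorem \ref{uniformboundlambda}), converge to a limit $h^*\in\cF\cap\cU$, which is therefore a Ricci-flat metric with isolated conical singularities. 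Combined with the bound $\|g(t)-h_j\|\leq2c\epsilon_j$ valid uniformly in $t$ on the $j$-th segment (the radius of the contraction ball in the proof of Proposition \ref{first-step-existence}), one checks that every $g(t)$ and every $h_j$ lies in $\cU$ — here one uses $\epsilon<R/2$, after possibly shrinking $\epsilon$ so that also $2c\epsilon<R/2$ — so the uniform estimates \eqref{auxiliary1}, \eqref{auxiliary2} and Proposition \ref{first-step-existence} indeed remain available at each stage.

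For long-time existence, the explicit formula for $T$ in the proof of Proposition \ref{first-step-existence} shows that $T_j=T(\epsilon_j,N)$ is bounded below by a positive constant independent of $j$ (indeed $T_j\to\infty$ since $\epsilon_j\to0$), so $\sum_j T_j=\infty$ and $g(t)$ is defined on all of $[0,\infty)$. For convergence, on the segment $t\in[t_j,t_{j+1}]$ we have $\|g(t)-h^*\|\leq\|g(t)-h_j\|+\|h_j-h^*\|\leq2c\epsilon_j+\|h_j-h^*\|$, and both terms tend to $0$ as $t\to\infty$ (so $j\to\infty$); hence $g(t)\to g^*:=h^*\in\cF\cap\cU$, which is the assertion.

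The main obstacle is not analytic: all the substantive work — short-time existence with an exponential gain (Proposition \ref{first-step-existence}), the Lipschitz property of $\Pi$ in \eqref{auxiliary1}, and the uniform spectral gap over $\cF$ (Theorem \ref{uniformboundlambda}) — is already in place. The real content is the bookkeeping that makes the iteration self-consistent: the geometric decay rate $\epsilon_{j+1}=\frac{c+1}{N}\epsilon_j$, which demands $N>c+1$, must beat the amplification factor $1+c$ introduced by the projection step, and the summability condition \eqref{R-bound} must confine the entire infinite-time flow to the neighborhood $\cU$ on which all constants are uniform. The one genuinely new verification is that the limit $h^*=\lim_j h_j$ of the reference metrics lies in $\cF$, which uses only that $\cF\cap\cU$ is a finite-dimensional, relatively closed manifold.
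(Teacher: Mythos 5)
Your proof is correct and follows essentially the same iteration as the paper: repeatedly apply Proposition \ref{first-step-existence}, re-project with $\Pi$, use the geometric ratio $(c+1)/N$ and \eqref{R-bound} to keep all restart data in $\cU$, and conclude via the Cauchy sequence of reference metrics converging in $\cF\cap\cU$. The only cosmetic differences are that the paper flows for the same fixed time $T$ on every segment (rather than segments of length $T_j=T(\epsilon_j,N)$) and only checks membership in $\cU$ at the discrete restart times, so your additional shrinking of $\epsilon$ to ensure $2c\epsilon<\frac{R}{2}$ is not actually needed.
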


\begin{proof}
We consider the Ricci de Turck flow in Proposition \ref{first-step-existence}.
By \eqref{estimate-after-T}
\begin{equation}\label{g-N-1}
\| g(T) - \Pi g_0 \| \leq \frac{\epsilon}{N}.
\end{equation} 
By \eqref{auxiliary1} and the assumption $\| g_0 - h_0\| \leq \epsilon$ we conclude
\begin{equation}\label{g-N-2}
\| \Pi g(T) - \Pi g_0 \| \leq c \|g(T) - \Pi g_0\| \leq \frac{\epsilon c}{N}.
\end{equation}
Consequently, combining \eqref{g-N-1} and \eqref{g-N-2} we conclude
\begin{equation}\label{g-N-3}
\| \Pi g(T) - g(T) \| \leq \| \Pi g(T) - \Pi g_0 \| + \| g(T) - \Pi g_0 \| \leq \frac{\epsilon (c+1)}{N}.
\end{equation}
We can now restart the Ricci de Turck flow at $g(T)$ with $\Pi g(T)$ as the background metric and proceed
iteratively. As long as $g(k T), \Pi g(k T) \in \cU$, for $k \in \N$, we conclude iteratively
(cf. \eqref{g-N-1}, \eqref{g-N-2} and \eqref{g-N-3})
\begin{equation}
\begin{split}
&\Vert g(k T) - \Pi g(k T) \Vert \leq  \frac{ \epsilon (c+1)^k}{N^k}, \\ 
&\Vert g((k+1) T) - \Pi g(k T) \Vert \leq \frac{ \epsilon (c+1)^{k }}{N^{k+1}}, \\
&\Vert \Pi g((k+1) T) - \Pi g(k T) \Vert \leq \frac{ \epsilon c (c+1)^{k }}{N^{k+1}}.
\end{split}
\end{equation}
This leads to the following diagram, where the numbers above arrows indicate the corresponding
distances of metrics, measured in the $\cH$ norm. 

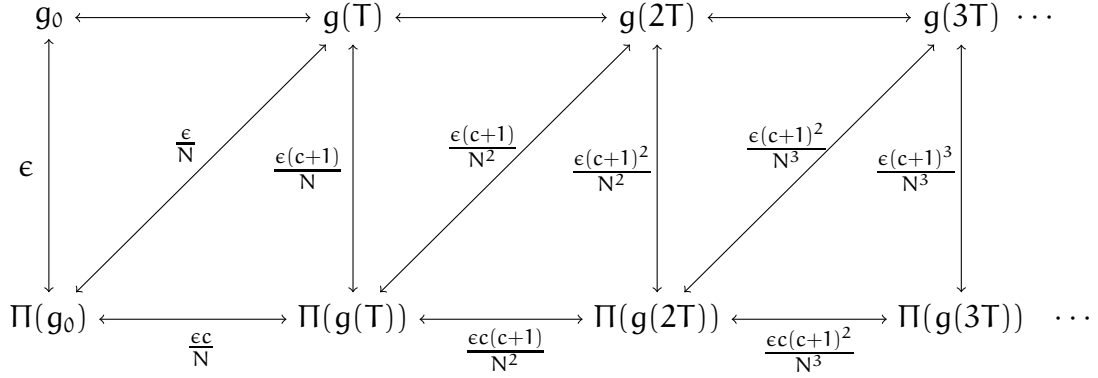
\begin{figure}[h]
\begin{center}
\begin{tikzpicture}

\node (A0) at (0,0) {$\Pi(g_0)$};
\node (A1) at (4,0) {$\Pi(g(T))$};
\node (A2) at (8,0) {$\Pi(g(2T))$};
\node (A3) at (12,0) {$\Pi(g(3T))$};

\node (B0) at (0,4) {$g_0$};
\node (B1) at (4,4) {$g(T)$};
\node (B2) at (8,4) {$g(2T)$};
\node (B3) at (12,4) {$g(3T)$};

\draw [<->] (A0) -- (A1);
\draw [<->] (A1) -- (A2);
\draw [<->] (A2) -- (A3);

\draw [<->] (B0) -- (B1);
\draw [<->] (B1) -- (B2);
\draw [<->] (B2) -- (B3);

\draw [<->] (A0) -- (B0);
\draw [<->] (A1) -- (B1);
\draw [<->] (A2) -- (B2);
\draw [<->] (A3) -- (B3);

\draw [<->] (A0) -- (B1);
\draw [<->] (A1) -- (B2);
\draw [<->] (A2) -- (B3);

\node at (-0.3,2) {$\epsilon$};
\node at (3.4,2) {$\frac{\epsilon (c+1)}{N}$};
\node at (7.4,2) {$\frac{\epsilon (c+1)^2}{N^2}$};
\node at (11.4,2) {$\frac{\epsilon (c+1)^3}{N^3}$};

\node at (1.8,2.3) {$\frac{\epsilon}{N}$};
\node at (5.7,2.3) {$\frac{\epsilon (c+1)}{N^2}$};
\node at (9.7,2.3) {$\frac{\epsilon (c+1)^2}{N^3}$};

\node at (2,-0.4) {$\frac{\epsilon c}{N}$};
\node at (6,-0.4) {$\frac{\epsilon c (c+1)}{N^2}$};
\node at (10,-0.4) {$\frac{\epsilon c (c+1)^2}{N^3}$};

\node at (13,4) {$\cdots$};
\node at (13.5,0) {$\cdots$};

\end{tikzpicture}
\end{center}
\label{edge-picture}
\caption{Iterative sequence of Ricci de Turck flows.}
\end{figure} 

Since for each $k \in \N$ by assumption \eqref{R-bound}
\begin{equation}
\begin{split}
\Vert \Pi g(k T) - h_0 \Vert &\leq \frac{\epsilon c}{N} \cdot 
\sum\limits_{j=0}^{k -1} \frac{(c+1)^j}{N^j} \leq \frac{R}{2}, \\
\Vert g(k T) - h_0 \Vert  &\leq \Vert g(k T) - \Pi g(k T) \Vert + \Vert \Pi g(k T) - h_0 \Vert \\
	& \leq \epsilon \cdot \left(\frac{c+1}{N}\right)^k + \frac{R}{2} \le \epsilon + \frac{R}{2} \le R	,
\end{split}
\end{equation}
$g(k T), \Pi g(k T) \in \cU \equiv B_R(h_0)$ and we can apply Proposition
\ref{first-step-existence} in each step, flowing for another time period $[k T, (k + 1) T]$. 
This proves long-time existence. The sequence $(\Pi g(kT))_{k\in \N}$ is Cauchy, since 
for any $k, \ell \geq n_0$
\begin{equation}
\Vert \Pi g(k T) - \Pi g(\ell T) \Vert \leq \frac{\epsilon c}{N} \cdot 
\sum\limits_{j=n_0}^{\infty} \frac{(c+1)^j}{N^j} \longrightarrow 0,
\end{equation}
as $n_0 \to \infty$. Since $\cF$ is a manifold, the limit $g^* \in \cF \cap \cU$ exists.
We conclude that the sequence $(g(kT))_{k\in \N}$ converges to $g^*$, which 
proves the statement.
\end{proof}

\end{document}